\numberwithin{equation}{section}
\newenvironment{cproof}[0]{\noindent\emph{Proof of Claim}:}{ $\hfill\clubsuit$\smallskip}
\newtheorem{thm}{Theorem}[section] \newtheorem{cor}[thm]{Corollary} \newtheorem{lem}[thm]{Lemma}
\newtheorem{prop}[thm]{Proposition}
\newtheorem{mainthm}{Theorem}
\newtheorem{mainconj}{Conjecture}
\theoremstyle{definition}
\theoremstyle{remark}
\newtheorem{claim}{Claim}
\newtheorem*{claim*}{Claim}
 \newcommand{\FF}{\mathbb F}  
\newcommand{\Gg}{\mathcal G}   \newcommand{\Tt}{\mathcal T}
  \newcommand{\Cc}{\mathcal C} 
 \newcommand{\Rr}{\mathcal R}  
 \newcommand{\Bb}{\mathcal B}  
 \newcommand{\Ss}{\mathcal S}  
\newcommand{\comout}[1]{}
\renewcommand{\Pr}{\mathsf{P}}
\newcommand{\Om}{\Omega}
\newcommand{\GB}{(G,\Bb)}
\newcommand{\GBu}{(\widehat{G},\widehat{\Bb})}
\newcommand{\fI}{\varphi}
\newcommand{\inv}{^{-1}}
\newcommand{\ie}{\textit{i.e.}\ }
\newcommand{\iiff}{if and only if }
\newcommand{\wolog}{without loss of generality}
\DeclareMathOperator{\cl}{cl}
\newcommand{\longcomment}[1]{}
\newcommand{\br}[1]{\left( #1 \right)}
\renewcommand{\:}{\colon}
\newcommand{\mc}[0]{\mathcal}
\newcommand{\bb}[0]{\mathbb}
\newcommand{\bs}[0]{\backslash}
\newcommand{\vp}[0]{\varphi}
\newcommand{\tail}[0]{{\mathbf{t}}}
\newcommand{\head}[0]{\mathbf{h}}
\newcommand{\ab}{\allowbreak}
\DeclareMathOperator{\Star}{star}
\title{Matrix representations of frame and lifted-graphic matroids correspond to gain functions}
\author{Daryl Funk\footnote{Department of Mathematics, Douglas College, New Westminster, British Columbia, Canada.
\underline{Email}: funkd@douglascollege.ca}
 \and
Irene Pivotto\footnote{Department of Mathematics and Statistics, University of Western Australia, Perth, Australia}
 \and
Daniel Slilaty\footnote{Department of
Mathematics and Statistics, Wright State University, Dayton OH, USA. \underline{Email}: daniel.slilaty@wright.edu. Work
partially supported by a grant from the Simons Foundation \#246380.}}
\date{} 
\begin{document}


\maketitle

\begin{abstract}
Let $M$ be a 3-connected matroid and let $\mathbb F$ be a field.
Let $A$ be a matrix over $\mathbb F$ representing $M$ and let $(G,\mathcal B)$ be a biased graph representing $M$.
We characterize the relationship between $A$ and $(G,\mathcal B)$, settling four conjectures of Zaslavsky.
We show that for each matrix representation $A$ and each biased graph representation $\GB$ of $M$, $A$ is projectively equivalent to a canonical matrix representation arising from $G$ as a gain graph over $\mathbb F^+$ or $\mathbb F^\times$ realizing $\Bb$.
Further, we show that the projective equivalence classes of matrix representations of $M$ are in one-to-one correspondence with the switching equivalence classes of gain graphs arising from $(G,\mathcal B)$, except in one degenerate case.
\end{abstract}

\section{Introduction} \label{sec:intro}

\renewcommand{\thefootnote}{\fnsymbol{footnote}} 
\footnotetext{\emph{Key words:} frame matroids, lifted-graphic matroids, representable matroids, gain graphs, group-labelled graphs}     
\renewcommand{\thefootnote}{\arabic{footnote}} 

An amazing theorem of Kahn \& Kung \cite{MR654846} shows that there are only two matroid varieties containing 3-connected matroids: those given by (1) projective geometries over finite fields, and (2) Dowling geometries over finite groups.
The matroids belonging to both of these varieties are precisely the simple frame matroids linearly representable over finite fields.
Thus it is perhaps not surprising that these matroids have a central role in matroid structure theory  \cite{MR2459453, GeelenGerardsWhittle:HighlyConnected}.
In this paper we characterize the relationship between the matrix and biased-graphic representations of the 3-connected matroids that are members of both of these important classes.
While on the surface there is no obvious reason that there should be any such relationship, these representations are in fact inherently linked.

 For a vertex $v$ in a graph $G$, denote by $\Star(v)$
the set of edges incident to $v$ whose other end is in $V (G)-\{v\}$ (thus $\Star(v)$ contains no loops).
Let $M$ be a matroid.
Suppose there is a graph $G$ with $E(G)=E(M)$ such that
(i) the edge set of each component of $G$ has rank in $M$ no larger than the number of its vertices,
(ii) for each vertex $v \in V(G)$, $\cl_M(E(G-v)) \subseteq E(G) - \Star(v)$, and
(iii) no circuit of $M$ induces a subgraph of $G$ with more than two components.
Such a graph $G$ is a \emph{framework} for $M$; a  matroid that has a framework is \emph{quasi-graphic} (\cite{JGT22177} is the introductory paper).
Setting $\Bb = \{ C : C$ is a cycle of $G$ and a circuit of $M\}$ yields a \emph{biased graph} $(G,\Bb)$: that is, a graph together with a distinguished collection $\Bb$ of its cycles obeying a certain condition (precise definitions are given in Section \ref{sec:preliminaries}).
Cycles in $\Bb$ are said to be \emph{balanced}, and are otherwise \emph{unbalanced}.

Suppose $M$ also has a representation as a matrix $A$ over a field $\FF$: the columns of $A$ are indexed by $E(M)$ such that a subset of elements of $M$ is independent precisely when its columns are linearly independent. That is, $M$ is \emph{linearly representable over $\FF$}; we write $M=M(A)$.
It is shown in \cite{MR4037634} (and in \cite{JGT22177} in the case that $M$ is 3-connected) that when $M$ is both linearly representable over a field and quasi-graphic, then $M$ is either a \emph{frame matroid} or a \emph{lifted-graphic matroid}.
When $M$ is frame or lifted-graphic, the biased graph $(G,\Bb)$
completely determines $M$.
We write $M = F\GB$ when $M$ is a frame matroid, and $M = L\GB$ when $M$ is lifted-graphic, and say the biased graph $(G,\Bb)$ \emph{represents} $M$.

We ask the question: given a 3-connected matroid $M$, a matrix $A$ over a field, and a biased graph $\GB$, such that $M = M(A) = F\GB$ (respectively, $M = M(A) = L\GB$), what relationship is there between $A$ and $\GB$?
In the remainder of this introduction we provide a brief high-level overview of our answer; precise definitions are given in Section \ref{sec:preliminaries}.

\subsection{Projective equivalence, gain graphs, and canonical representations} \label{subsec:gaingraphscanonicalreps}

A pair of matrices representing a matroid over a field $\FF$ are \emph{projectively equivalent} if one may be obtained from the other via a sequence of the following elementary operations:
interchange two rows, multiply a row by a nonzero element of $\FF$, replace a row by its sum with another row, add or remove a zero row, interchange two columns (along with their labels), or multiply a column by a nonzero element of $\FF$.
(A pair of matrices are generally said to be \emph{equivalent} if one may be obtained from the other via such a sequence along with possibly applying an automorphism of $\FF$ to the entries of one of the matrices; our results do not require field automorphisms.  Projective equivalence has sometimes been called \emph{strong equivalence} in the literature.)
Our main tool for showing projective equivalence of a pair of matrix representations is the
following well-known result of Brylawski and Lucas.

\begin{prop}[\cite{oxley:mt2011}, Proposition 6.3.12]
Let $A$ and $B$ be $r \times n$ matrices over a field with the columns of each labelled, in order, by $e_1$, $e_2$, \ldots, $e_n$, with $r \geq 1$.
Then $A$ and $B$ are projectively equivalent representations of a matroid on $\{e_1, e_2, \ldots, e_n\}$ if and only if there is a non-singular matrix $T$ and a non-singular diagonal matrix $S$ such that $TAS=B$.
\end{prop}

The connection between matrix and biased graph representations is provided by \emph{gain graphs} (also known as \emph{group-labelled graphs}).
We show that if a matroid $M$ has a representation as a matrix over a field $\FF$ and has a representation as a biased graph $\GB$, then an element of $\FF$ may be assigned to each edge of the graph $G$ such that $G$ becomes a gain graph, over either the additive or multiplicative group of $\FF$, realizing $\Bb$.
Conversely, Zaslavsky \cite{Zaslavsky:BG4} showed that every such gain graph yields a \emph{canonical matrix representation} $A$ for $M$ over $\FF$, as follows.

When the gain graph is over the additive group of $\FF$, $A$ is the matrix with rows indexed by $V(G) \cup v_0$, where $v_0 \notin V(G)$, and columns indexed by $E(G)$ in which, if $e$ is assigned element $g \in \FF$, has distinct endpoints $u, v$, and is directed from $u$ to $v$, then entry $a_{v_0 e} = g$, $a_{ue} = 1$, $a_{ve} = -1$, and all remaining entries in column $e$ are zero; if $e$ is a loop then $a_{v_0 e} = g$ and all remaining entries in column $e$ are zero.
Thus $A$ consists of the oriented incidence matrix of $G$ with one additional row containing the elements of $\FF$ that are assigned to each edge $e \in E(G)$;
we say $A$ is a \emph{canonical lift matrix}.
\label{descriptioncanonicalframematrix}
When the gain graph is over the multiplicative group of $\FF$, $A$ is the matrix with rows indexed by $V(G)$ and columns indexed by $E(G)$ in which, if $e$ is assigned element $g \in \FF$, has distinct endpoints $u, v$, and is directed from $u$ to $v$, then $a_{ue} = 1$, $a_{ve} = -g$, and the remaining entries in column $e$ are zero; if $e$ is a loop incident to vertex $v$ then $a_{ve} = 1-g$ and all other entries in column $e$ are zero.
We say $A$ is a \emph{canonical frame matrix}.

An assignment of elements of a group $\Gamma$ to the oriented edges of a graph is a \emph{$\Gamma$-gain function}.
A graph equipped with a $\Gamma$-gain function is a \emph{$\Gamma$-gain graph}.
\emph{Switching}, and when $\Gamma$ is the additive group of a field, \emph{scaling}, are two operations that may be applied to a $\Gamma$-gain function to obtain a new $\Gamma$-gain function on the same graph.
These operations re-assign elements of $\Gamma$ to $E(G)$ in such a way that the collection of cycles in $\Bb$ determined by the new gain function remains unchanged.
Thus the biased graph determined by a pair of gain graphs, one resulting from a switching or scaling operation applied to the other, remains the same.
So it makes sense to consider switching and scaling as biased graphic analogs to the elementary matrix operations for projective equivalence.
In fact, as we shall see, the connection between switching and scaling on gain graphs and projectively equivalent matrix representations is deeper than merely analogous.

As will be clear by the citations in Section \ref{sec:preliminaries}, the notions of frame matroids, lifted-graphic matroids, their representations by biased graphs, gain graphs, their associated canonical matrix representations, and the operations of switching and scaling, are all due to Thomas Zaslavsky, presented in his seminal series of papers \cite{Zaslavsky:BG1, Zaslavsky:BG2, MR1273951, Zaslavsky:BG4}.

\subsection{Main results}

Switching and scaling naturally partition the set of $\Gamma$-gain functions on a graph into equivalence classes in which two gain functions are equivalent if one may be obtained from the other by switching,
or when $\Gamma$ is the additive group of a field, by switching and scaling.
When $\GB$ is a biased graph realized by a $\Gamma$-gain graph, these are the \emph{switching classes} of $\Gamma$-realizations of $\GB$.
When the group is the additive group $\FF^+$ or multiplicative group $\FF^\times$ of a field $\FF$, it is straightforward to show that gain functions belonging to the same switching class yield projectively equivalent canonical matrix representations \cite{Zaslavsky:BG4}.
In \cite{Zaslavsky:BG4} Zaslavsky conjectured that the converse also holds.
Leaving aside some technicalities,
Conjectures 2.8 and 4.8 of \cite{Zaslavsky:BG4} are essentially as follows.

\begin{mainconj}[Zaslavsky \cite{Zaslavsky:BG4}] \label{Zconj28}
Let $G$ be a graph and let $\FF$ be a field.
The canonical frame matrices given by two $\FF^\times$-gain functions $\vp$ and $\psi$ on $G$ are projectively equivalent if and only if $\vp$ and $\psi$ are switching equivalent.
\end{mainconj}

\begin{mainconj}[Zaslavsky \cite{Zaslavsky:BG4}] \label{Zconj48}
Let $G$ be a graph and let $\FF$ be a field.
The canonical lift matrices given by two $\FF^+$-gain functions $\vp$ and $\psi$ on $G$ are projectively equivalent if and only if $\vp$ and $\psi$ are switching-and-scaling equivalent.
\end{mainconj}

A \emph{joint} is an unbalanced loop in a biased graph.
It is a property of the switching operation that the gain on a joint remains unchanged by every switching operation (provided $\Gamma$ is abelian; more precisely, the gain on a joint is conjugated by switching).
Thus one of the technicalities to be dealt with in
Conjectures \ref{Zconj28} and \ref{Zconj48} is the following.
Let $\vp$ and $\psi$ be $\FF^\times$-gain functions on a graph $G$.
Suppose $\vp(e)=\psi(e)$ for every edge $e$ that is not a joint, but that there is a joint $e'$ for which $\vp(e') \neq \psi(e')$ and neither $\vp(e')$ nor $\psi(e')$ is zero.
Since a switching operation can alter neither $\vp(e')$ nor $\psi(e')$, $\vp$ and $\psi$ are not switching equivalent.
Yet clearly the canonical matrices defined by $\vp$ and $\psi$
are projectively equivalent, since each of their columns representing the element $e'$ may be scaled so that the single nonzero entry in each is equal to any element of $\FF^\times$.
Our first main result is that for gain graphs representing 3-connected matroids,
this issue with gains on loops provides the only counterexamples to Conjectures \ref{Zconj28} and \ref{Zconj48}.

\begin{mainthm} \label{mainthm1a}
Let $M$ be a 3-connected matroid of rank greater than two. Let $\GB$ be a loopless biased graph representing $M$ and let $\FF$ be a field.

\textup{(i)} The canonical frame matrices given by two $\FF^\times$-gain functions $\vp$ and $\psi$ realizing $\GB$ are projectively equivalent if and only if $\vp$ and $\psi$ are switching equivalent.

\textup{(ii)} The canonical lift matrices given by two $\FF^+$-gain functions $\vp$ and $\psi$ realizing $\GB$ are projectively equivalent if and only if $\vp$ and $\psi$ are switching-and-scaling equivalent.

\textup{(iii)} Provided $\GB$ has no vertex whose deletion leaves a biased graph with no unbalanced cycles, no canonical frame matrix representation is projectively equivalent to any canonical lift matrix representation of $M$.
\end{mainthm}

In fact, we can say more.
Theorem \ref{mainthm1a} follows from the stronger statements of our Theorems \ref{T:ProjectiveIsSwitching} and \ref{thm:proj_equiv_iff_switching_equiv_bal_vertex}.
Together these also give necessary and sufficient conditions for projective equivalence of a canonical lift matrix and a canonical frame matrix representation.
Examples that are not 3-connected for which the conclusions of Theorem \ref{mainthm1a} do not hold are not difficult to construct, so the hypothesis that $M$ be 3-connected is necessary. 

Let $M$ be a linearly representable frame or lifted-graphic matroid.
Not only may $M$ have projectively inequivalent matrix representations arising as canonical matrix representations from the same biased graph, but there may also be different biased graphs representing $M$.
We say a matrix representation is \emph{particular to} the biased graph $\GB$ when it is a canonical matrix arising from a gain function realizing $\GB$.
Zaslavsky has conjectured the following.

\begin{mainconj}[\cite{Zaslavsky:BG2}]
\label{con:Z3}
Let $\GB$ be a biased graph, where $G$ is sufficiently connected, and let $\FF$ be a field.
If $F\GB$ (resp., $L\GB$) is linearly representable over $\FF$ then $F\GB$ (resp., $L\GB$) has a canonical representation particular to $\GB$.
\end{mainconj}
Zaslavsky subsequently further conjectured the following.

\begin{mainconj}[Zaslavsky, personal communication] \label{con:Z4}
Let $\GB$ be a biased graph, where $G$ is sufficiently connected, and let $\FF$ be a field.
Every $\FF$\-/representation of $F\GB$ (respectively $L\GB$) is projectively equivalent to a canonical representation.
\end{mainconj}

Geelen, Gerards, and Whittle prove Conjecture \ref{con:Z4} for 3-connected matroids in \cite{JGT22177}, though the result is not explicitly stated (it appears in the proof of their Theorem 1.4).
Surprisingly, 
a result even stronger than Conjectures \ref{con:Z3} and \ref{con:Z4} holds:

\begin{mainthm} \label{mainthm2}
Let $M$ be a 3-connected matroid, and let $\FF$ be a field.
Let $A$ be a matrix over $\FF$ representing $M$ and let $\GB$ be a biased graph representing $M$.
Then $A$ is projectively equivalent to a canonical representation particular to $\GB$.
\end{mainthm}

Theorem \ref{mainthm2} follows from the stronger statement of Theorem \ref{thm:all_reps_eqiv_to_cannonical}.
Together Theorems \ref{T:ProjectiveIsSwitching},
\ref{thm:proj_equiv_iff_switching_equiv_bal_vertex}, and \ref{thm:all_reps_eqiv_to_cannonical} imply our next main result.
We need just a few more definitions before we can state it precisely.
A biased graph is \emph{balanced} when all of its cycles are balanced, \emph{almost-balanced} when it is not balanced but there is a vertex that is contained in every unbalanced cycle of length at least two, and \emph{properly unbalanced} otherwise.
A properly unbalanced biased graph with no pair of vertex-disjoint unbalanced cycles is \emph{tangled}.
If $\GB$ is balanced, then both $F\GB$ and $L\GB$ are equal to the cycle matroid $M(G)$ of $G$.
Since $M(G)$ has a projectively unique matrix representation over every field, and every gain function realizing a balanced biased graph is switching equivalent to the gain function assigning the group identity to every element (this follows from Proposition \ref{P:Normalize1} below), Conjectures \ref{Zconj28}-\ref{con:Z4} hold in this case.
Thus we just need consider almost-balanced and properly unbalanced biased graphs.
The collection of unbalanced cycles in almost-balanced biased graphs is highly structured and well understood (see \cite{AlmostBalancedReps}).
For each almost-balanced biased graph $\GB$ there is
a family of almost-balanced biased graphs $\Rr_{\GB}$, each of which represents the frame matroid $F\GB$.
We denote the unique biased graph in $\Rr_{\GB}$ with the least number of loops by $(\widehat{G},\widehat{\Bb})$; this is also the unique biased graph in the collection for which $F\GBu = L\GBu$.

Let $M$ be a 3-connected matroid with rank greater than two.
Let $\FF$ be a field and let $\GB$ be a biased graph representing $M$.
Let $\Ss_{\FF^\times}\GB$ denote the collection of switching classes of $\FF^\times$-gain functions realizing $\GB$,
and let $\Ss_{\FF^+}\GB$ denote the collection of switching-and-scaling classes of $\FF^+$-gain functions realizing $\GB$.
For each biased graph representing $M$ and each field $\FF$, define
\begin{linenomath}
\[
\Ss_{\FF}\GB =
\begin{cases}
\Ss_{\FF^\times}\GB &\text{if } M = F\GB \neq L\GB, \\
\Ss_{\FF^+}\GB &\text{if } M = L\GB \neq F\GB, \\
\Ss_{\FF^\times}\GB \cup \Ss_{\FF^+}\GB &\text{if $\GB$ is tangled,} \\
\Ss_{\FF^+}\GBu &\text{if $\GB$ is almost-balanced.}
\end{cases}
\]
\end{linenomath}

\begin{mainthm} \label{mainthm1to1correspondence}
Let $M$ be a 3-connected matroid of rank greater than two.
Let $\FF$ be a field and let $\GB$ be a loopless biased graph representing $M$.
The projective equivalence classes of matrices over $\FF$ representing $M$ are in one-to-one correspondence with the switching classes of gain functions in $\Ss_{\FF}\GB$.
\end{mainthm}

The proofs of our main results use an inductive argument.
For this purpose we
determine a small collection of biased graphs, at least one of which must occur as a biased topological subgraph in every 2-connected biased graph.
This investigation yields three results on unavoidable minors and topological minors of biased graphs that are of independent interest.

The graph obtained from a 3-cycle by replacing each edge with a pair of parallel edges is denoted by $2C_3$.
The graph obtained from a 4-cycle by replacing each edge in a pair of non-adjacent edges with a pair of parallel edges is the \emph{tube graph}, denoted by $2C_4''$.
We show that the collection $\mathcal{G}_0$ consisting of the six biased $2C_3$'s with no balanced 2-cycle, the three biased tubes with no balanced 2-cycle, and the four biased $K_4$'s with no balanced triangle, is the set of biased graphs that are minor-minimal amongst all 2-connected, properly unbalanced biased graphs.
Contraction of a loop is never required to obtain one of these minors; such a minor is a \emph{link minor}.

\begin{mainthm} \label{MT:UnavoidableMinors}
Every 2-connected properly unbalanced biased graph contains a biased graph in $\Gg_0$ as a link minor.
\end{mainthm}

We prove an analogue of Theorem \ref{MT:UnavoidableMinors} for biased topological subgraphs.
Let $\Pr$ denote the biased graph consisting of the triangular prism with just its two triangles balanced (Figure \ref{fig:T2PrimeSplit} on page \pageref{fig:T2PrimeSplit}).
Let $\Tt_0$ be the set of biased graphs consisting of those in $\Gg_0$ together with $\Pr$ and the two biased graphs obtained from $\Pr$ by contracting 1 and 2 edges, respectively, of the matching in $\Pr$ linking its two triangles.

\begin{mainthm} \label{MT:Unavoidsubdivisions}
Every 2-connected properly unbalanced biased graph contains as a biased subgraph a subdivision of a biased graph in $\Tt_0$.
\end{mainthm}

Theorems \ref{MT:UnavoidableMinors} and \ref{MT:Unavoidsubdivisions} are useful because switching inequivalence of gain functions over abelian groups can always be found
(in the interesting case that there are no joints) on a small minor.
This is the content of our final main result.
Two biased graphs representing the 4-point line $U_{2,4}$ as a frame or lifted-graphic matroid are shown in Figure \ref{fig:U24unlabelled} on page \pageref{fig:U24unlabelled}; we denote these biased graphs by $\mathsf U_2$ and $\mathsf U_3$.

\begin{mainthm} \label{MT:InequivalenceLocalized}
Let $(G,\mathcal B)$ be a 2-connected, loopless, and properly unbalanced biased graph.
Let $\Gamma$ be an abelian group and let $\vp$ and $\psi$ be $\Gamma$\-/realizations of $\GB$.
Assume that $\vp$ and $\psi$ are not switching equivalent.
In the case that $\Gamma$ is the additive group of a field, assume that $\vp$ and $\psi$ are not switching-and-scaling equivalent.
Then either

\begin{enumerate}[label=\textup{(\roman*)}]
\item
$\GB$ has a link minor $(H,\Ss) \in \Gg_0$ such that the gain functions induced by $\vp$ and $\psi$ on $E(H)$ are not switching equivalent (resp., not switching-and-scaling equivalent), or

\item
$\GB$ has $\mathsf U_2$ as a minor and $\mathsf U_3$ as a link minor such that the gain functions induced by $\vp$ and $\psi$ are not switching equivalent (resp., not switching-and-scaling equivalent) on the 2-cycle of $\mathsf U_2$ nor on the theta subgraph of $\mathsf U_3$.
\end{enumerate}
\end{mainthm}

The remainder of the paper is structured as follows.
In Section \ref{sec:preliminaries} we provide necessary preliminary notions.
In Section \ref{sec:unavoidableminorsandsubdivisions} we prove Theorems \ref{MT:UnavoidableMinors},  \ref{MT:Unavoidsubdivisions}, and \ref{MT:InequivalenceLocalized} on unavoidable minors and unavoidable biased topological subgraphs.
In Section \ref{sec:unavoidableminors} we show that Theorems \ref{mainthm1a} and \ref{mainthm2}
hold for the set of unavoidable minors, and finally in Section \ref{sec:MainThmProofs} we prove Theorems \ref{mainthm1a},  \ref{mainthm2}, and \ref{mainthm1to1correspondence}.

\section{Preliminaries} \label{sec:preliminaries}

We assume that the reader is familiar with matroid theory as in Oxley's standard text \cite{oxley:mt2011}.
In this section we summarize those notions that are central to the results of this paper or are not standard in the literature, and introduce some required notation.

\subsection{Graphs and biased graphs} \label{sec:graphsandbiaedgraphs}

Let $G$ be a graph.
We denote the subgraph of $G$ induced by a subset $X\subseteq E(G)$ by $G[X]$.
The set of vertices of $G[X]$ is denoted by $V(X)$.
A $k$\-/\emph{separation} of $G$ is a partition $(A,B)$ of $E(G)$ with $|A|\geq k$, $|B|\geq k$, and
$|V(A)\cap V(B)|=k$.
A \emph{vertical $k$\-/separation} of $G$ is a $k$\-/separation $(A,B)$ of $G$ with both $V(A)-V(B)$ and $V(B)-V(A)$ non-empty.
A graph on at least $k+2$ vertices is \emph{$k$\-/connected} if it has no vertical $l$\-/separation for any $l<k$.
A graph on $k+1$ vertices is said to be $k$\-/connected if it has a spanning complete subgraph.
Thus a highly connected graph may contain loops or parallel edges.
We often need to distinguish between edges with distinct endpoints and loops; an edge with distinct endpoints is a \emph{link}.

A \emph{biased graph} is a pair $(G, \mathcal{B})$ where $G$ is a graph and $\mathcal{B}$ is a collection of cycles of $G$ with the property that no theta subgraph of $G$ contains
exactly two cycles in $\Bb$;
a \emph{theta} graph is the union of three internally disjoint paths linking a pair of vertices.
Such a collection $\Bb$ is said to satisfy the \emph{theta property}.
Cycles in $\Bb$ are \emph{balanced}; cycles not in $\Bb$ are \emph{unbalanced}.
A biased graph is
\emph{balanced} if all cycles are balanced,
\emph{unbalanced} if it contains an unbalanced cycle and  \emph{contrabalanced} if no cycle is balanced.
Similarly, a subset of edges or a subgraph is \emph{balanced}, \emph{unbalanced}, or \emph{contrabalanced}, according to whether all, not all, or none, respectively, of the cycles it induces or contains are balanced.
A vertex $v$ is a \emph{balancing vertex} if every unbalanced cycle contains $v$.
A biased graph $(G,\mc B)$ is \emph{$k$\-/connected} if $G$ is $k$\-/connected.
For two biased graphs $(G,\mc B)$ and $(H,\mc S)$ an \emph{isomorphism} $\iota\colon(G,\mc B)\to(H,\mc S)$ consists of an underlying graph isomorphism $\iota\colon G\to H$ that takes $\mc B$ to $\mc S$.
We sometimes write $\Om = \GB$ and speak of the biased graph $\Om$ when there is no need to be explicit about the underlying graph $G$ and its collection of balanced cycles $\Bb$.

We denote the set of all cycles in $G$ by $\mc C(G)$.
Let $(G,\mc B)$ be a biased graph and $e$ an edge in $G$. Define $(G,\mc B)\bs e=(G\bs e,\mc B|_{G\bs e})$ where $\mc
B|_{G\bs e}=\mc B\cap\mc C(G\bs e)$. If $e$ is a link, then define $(G,\mc B)/e=(G/e,\mc B|_{G/e})$ where $\mc
B|_{G/e}=\{C\in\mc C(G/e):C\in\mc B\mbox{ or }C\cup e\in\mc B\}$. If $e$ is a balanced loop, then $(G,\mc B)/e=(G,\mc B)\bs e$.
In order that contraction of a joint $e$ of $\GB$ remain consistent with the operation of contraction of $e$ in the lifted-graphic or frame matroid represented by $\GB$, two different contraction operations in $\GB$ are required, depending upon which matroid $\GB/e$ is to represent.
Let $e$ be a joint of $\GB$, incident to vertex $v$.
To obtain a biased graph  representing $L\GB/e$, define $\GB/e = (G \bs e, \Cc(G \bs e))$.
To obtain a biased graph representing $F\GB/e$, define
$(G,\mc B)/e=(G',\mc B')$ where $G'$ is obtained from $G$ by adding each loop $e'\neq e$ incident to $v$ to $\Bb$ and replacing each link $f$ incident to $v$ with a joint incident to its other endpoint.
The collection $\mc B'$ is $\mc B$ restricted to the subgraph $G-v$ along with any new balanced loops incident to $v$.
Whenever contracting a joint, we will be explicit about which contraction operation is used, \emph{lift-type} or \emph{frame-type}, respectively.
In fact, we will only require contraction of a joint once; this will be a frame-type contraction.
All other minors we consider may be obtained
without contracting joints.
Such a minor is a \emph{link minor}.
We permit deletion of isolated vertices and do so without comment.

\subsection{Matroids arising from biased graphs} \label{sec:matorids_of_biased_graphs}

A \emph{frame matroid} is a matroid $M$ to which a basis $B$ may be added such that each $e\in E(M)$ is contained in the closure of some 2-element subset of $B$.
A subset $C\subseteq E(G)$ is a circuit of the frame matroid $F(G,\mc B)$ precisely when $C \in \Bb$ or $C$ induces a subdivision of one of the graphs shown in Figure \ref{fig:FrameCircuits} containing no balanced cycle \cite{Zaslavsky:BG2}.
\begin{figure}[tbp] \begin{center} \includegraphics[page=1,height=30pt]{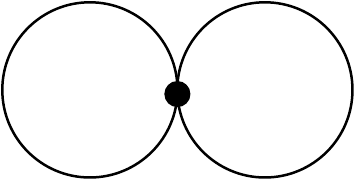}\hspace{1.25cm}
\includegraphics[page=2,height=30pt]{Circuits}\hspace{1.25cm}\includegraphics[page=4,height=35pt]{Circuits} \end{center}
\caption{Circuits of the frame matroid.} \label{fig:FrameCircuits} \end{figure}
A matroid $M$ is \emph{lifted-graphic} if there is a single-element extension $M_0$ of $M$ by an element $e_0$ such that $M_0/e_0$ is graphic.
Dualizing Crapo's characterization \cite{Crapo:Single-ElementExtensions} of single-element extensions of matroids, Zaslavsky observed \cite{Zaslavsky:BG2} that if $G$ is a graph such that $M_0/e_0=M(G)$
then $M$ has a natural description in terms of a biased graph $\GB$.
A subset $C\subseteq E(G)$ is a
circuit of the lifted-graphic matroid $L(G,\mc B)$ precisely when $C \in \Bb$ or $C$ induces a subdivision of one of the subgraphs shown in Figure \ref{fig:LiftCircuits} containing no balanced cycle \cite{Zaslavsky:BG2}.
Minors of biased graphs and their matroids agree:
for any edge $e$ of a biased graph $\GB$,
$F(G,\mc B)\bs e = F((G,\mc B)\bs e)$,
$F(G,\mc B)/e = F((G,\mc B)/e)$,
$L(G,\mc B)\bs e = L((G,\mc B) \bs e)$,
for any link $e$, $L(G,\mc B)/e = L((G,\mc B)/e)$, and if $e$ is a joint then $L(G,\mc B)/e = M(G \bs e)$
\cite{Zaslavsky:BG2}.

\begin{figure}[tbp] \begin{center}
\includegraphics[page=1,height=30pt]{Circuits}\hspace{1.25cm}
\includegraphics[page=3,height=30pt]{Circuits}\hspace{1.25cm}\includegraphics[page=4,height=35pt]{Circuits}
\end{center}
\caption{Circuits of the lift matroid.} \label{fig:LiftCircuits}
\end{figure}

Evidently whenever $\GB$ is a biased graph with no two vertex-disjoint unbalanced cycles, $F\GB = L\GB$.
For notational convenience and to avoid potential confusion, in the case that $\GB$ does not have a pair of vertex-disjoint unbalanced cycles we denote the matroid on ground set $E(G)$ that is equal to both $F\GB$ and $L\GB$ by $M\GB$.

\subsection{Gain graphs}

A \emph{gain graph} is obtained from a graph by assigning a direction and an element of a group (a \emph{gain}) to each edge of $G$ \cite{Zaslavsky:BG1}.
Let $G$ be a graph and let $\Gamma$ be a group.
Orient the edges of $G$ by arbitrarily assigning a direction to each edge: for each edge $e \in E(G)$ choose one of its ends for its \emph{tail} $\tail(e)$.
The other end of $e$ is its \emph{head} $\head(e)$; if $e$ is a loop then $\head(e)=\tail(e)$.
We specify an oriented edge $e$ by the ordered triple $(e;u,v)$ where $u=\tail(e)$ and $v=\head(e)$; then $e\inv$ is the ordered triple $(e;v,u)$.
We think of $e\inv$ as the oriented edge $e$ traversed in the reverse direction.
The collection of ordered triples $\{(e; u, v) : e \in E(G)$ has ends $u, v \}$, consisting of all oriented edges of $G$ and their inverse orientations is denoted by $\vec E(G)$.
As long as no confusion may arise, we write $e$ for both the edge $e \in E(G)$ and for an ordered triple $(e; u,v) \in \vec E(G)$ specifying a direction of traversal of $e$.
Let $\Gamma$ be a group, and let $\gamma : \vec E(G) \to \Gamma$ be a function satisfying the following condition:
for each link $e$, if $\Gamma$ is multiplicative then $\gamma(e\inv) = \gamma(e)\inv$ while if $\Gamma$ is additive then $\gamma(e\inv) = -\gamma(e)$.
Such a function is a \emph{$\Gamma$\-/gain function on} $G$; the pair $(G,\gamma)$ is a \emph{$\Gamma$\-/gain graph}.

A gain graph $(G,\gamma)$ naturally gives rise to a biased graph $(G,\Bb_\gamma)$, where the membership of each cycle in the collection of balanced cycles $\Bb_\gamma$ is determined by the gains assigned to its edges, as follows.
Let $W = (v_1,e_1,v_2, \ldots, v_n, e_n, \ab v_{n+1})$, where for each $i$, $v_i, v_{i+1}$ are the ends of edge $e_i$, be a walk in $G$.
Define $\gamma(W) = \gamma(e_1; v_1, v_2) \gamma(e_2; v_2, v_3) \cdots \gamma(e_n; v_n, v_{n+1})$ when $\Gamma$ is multiplicative and $\gamma(W) = \gamma(e_1; v_1, v_2) + \gamma(e_2; v_2, v_3) + \cdots + \gamma(e_n; v_n, v_{n+1})$ when $\Gamma$ is additive.
For each cycle $C$ of $G$ choose a closed Eulerian walk $W_C$ in $C$, and define $C$ to be \emph{balanced} with respect to $\gamma$ if $\gamma(W_C)$ is the identity element of $\Gamma$.
Let $\mc B_\gamma$ be the collection of cycles in $G$ that are
balanced with respect to $\gamma$.
Observe that if $W'_C$ is another closed Eulerian walk in $C$ then $\gamma(W_C)$ and $\gamma(W'_C)$ are conjugate.
Thus $\Bb_\gamma$ is well-defined.
If $\GB$ is a biased graph and $\gamma$ is a $\Gamma$-gain function such that $\Bb_\gamma = \Bb$, then we say $\gamma$ \emph{realizes} $\GB$ and that $\gamma$ is a \emph{$\Gamma$\-/realization} of $\GB$.

\paragraph{Switching and scaling.}
Given a $\Gamma$\-/gain function $\gamma$ on $G$ and a function $\eta\colon V(G)\to\Gamma$, define the gain function
$\gamma^\eta$ by $\gamma^\eta(e) = \eta(\tail(e))\inv \cdot \gamma(e)\cdot \eta(\head(e))$ if $\Gamma$ is multiplicative and by
$\gamma^\eta(e)=-\eta(\tail(e))+\gamma(e)+\eta(\head(e))$ if $\Gamma$ is additive.
The function $\eta$ is a \emph{switching function}.
It is straightforward to check that a cycle $C$ is balanced with respect to $\gamma$ if and only if $C$ is balanced with respect to $\gamma^\eta$, so $\mc B_\gamma=\mc B_{\gamma^\eta}$.
Observe also that for switching functions $\eta_1$ and $\eta_2$, $(\gamma^{\eta_1})^{\eta_2}=\gamma^{\eta_1\eta_2}$ or $\gamma^{\eta_1+\eta_2}$.
Two $\Gamma$\-/gain functions $\vp$ and $\psi$ are \emph{switching equivalent} if there is a switching function $\eta$ such that $\vp^\eta=\psi$.
In the case that $\Gamma$ is the additive group of a field $\FF$, we may choose any nonzero element $a \in \FF$ and obtain a new gain function $a\vp$ defined by $a\cdot\vp(e)$ for each edge $e$.
Clearly $\Bb_{a\vp} = \Bb_{\vp}$.
We say $a\vp$ is obtained from $\vp$ by \emph{scaling}.
When $\Gamma$ is the additive group of a field, we say two gain functions $\vp$ and $\psi$ are \emph{switching-and-scaling equivalent} if there is a switching function $\eta$ and scalar $a\in\bb F^\times$ such that $a\vp^\eta=\psi$.
Evidently, for a multiplicative group $\Gamma$ the relation of being switching equivalent partitions the collection of $\Gamma$\-/gain functions on a graph into equivalence classes, its \emph{switching classes}.
Similarly, when $\Gamma$ is the additive group of a field the relation the being switching-and-scaling equivalent partitions the collection of $\Gamma$\-/gain functions on a graph into its \emph{switching-and-scaling classes}.
Propositions \ref{P:Normalize1} and \ref{P:NormalizationUnique} are immediate.

\begin{prop} \label{P:Normalize1}
Let $F$ be a forest in a graph $G$ and let $\gamma$ be a $\Gamma$\-/gain function on $G$.
There is a switching function $\eta$ such that $\gamma^\eta(e)$ is the identity for all edges $e$ in $F$.
\end{prop}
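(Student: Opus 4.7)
The plan is to construct $\eta$ explicitly by choosing a root in each component and propagating gains along the tree. Because $F$ is a maximal forest of $G$, the connected components of $F$ are exactly the connected components of $G$, so in each component we may fix an arbitrary root vertex $r$. For any vertex $v$ in that component, there is a unique $rv$-path $P_v$ in $F$; in particular $P_r$ is the empty walk at $r$.

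Define $\eta\colon V(G)\to\Gamma$ by $\eta(v)=\vp(P_v)\inv$ (multiplicative case) or $\eta(v)=-\vp(P_v)$ (additive case). Now let $e\in F$ be an oriented edge with $\tail(e)=u$ and $\head(e)=v$. Because $F$ is acyclic, exactly one of $u,v$ is closer to the root; say $v$ is, so that $P_u$ is obtained from $P_v$ by appending $e\inv$ at the end (or, symmetrically, $P_v$ is obtained from $P_u$ by appending $e$). In either case a direct computation using $\vp(w_1w_2)=\vp(w_1)\vp(w_2)$ and $\vp(e\inv)=\vp(e)\inv$ shows
\[
\vp^\eta(e)=\eta(u)\inv\,\vp(e)\,\eta(v)=\vp(P_u)\,\vp(e)\,\vp(P_v)\inv
\]
equals the identity, since $\vp(P_u)\vp(e) = \vp(P_u\cdot e) = \vp(P_v)$ (or $\vp(P_u) = \vp(P_v)\vp(e)$, depending on orientation). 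The additive verification is identical with $-$ in place of inversion.

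This handles every edge of $F$, proving the proposition. The only step that requires any care is tracking the orientation of $e$ relative to the direction away from the root, and that is routine; no obstacle arises, which is why the authors call the statement immediate.
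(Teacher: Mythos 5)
Your proof is correct and is exactly the standard argument the authors have in mind; the paper itself gives no proof, remarking only that the proposition ``is immediate.'' Rooting each component of $F$, setting $\eta(v)=\vp(P_v)\inv$ (or $-\vp(P_v)$), and observing that for any tree edge $e=uv$ the closed walk $P_u\,e\,P_v\inv$ has trivial gain is the canonical construction. One small slip: your parenthetical alternative ``or $\vp(P_u)=\vp(P_v)\vp(e)$'' is not what the second orientation case gives --- in fact $\vp(P_u)\vp(e)=\vp(P_v)$ holds in \emph{both} cases (whether $P_v=P_u\cdot e$ or $P_u=P_v\cdot e\inv$), so no case split on that identity is needed; this does not affect the correctness of the conclusion.
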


Let $F$ be a forest of a graph.
A gain function $\gamma$ is said to be $F$\-/\emph{normalized} when $\gamma(e)$ is
the identity for all edges $e$ in $F$.

\begin{prop} \label{P:NormalizationUnique}
Let $\Gamma$ be an abelian group, $G$ be a graph, and $F$ a maximal forest in $G$. If $\vp$ and $\psi$ are two $F$\-/normalized $\Gamma$\-/gain functions on $G$, then $\vp$ and $\psi$ are switching equivalent if and only if $\vp = \psi$.
In the case that $\Gamma$ is the additive group of a field $\FF$, $\vp$ and $\psi$ are switching-and-scaling equivalent if and only if $\vp = a \psi$ for some scalar $a \in \FF^\times$.
\end{prop}

\paragraph{Minors and induced gain functions.}
Minors of gain graphs are defined so that they are consistent with those of their corresponding biased graphs.
Let $\Gamma$ be a group, and let $(G,\gamma)$ be a $\Gamma$\-/gain graph.
Every minor $H$ of $G$ has an \emph{induced} $\Gamma$\-/gain function $\gamma|_H$ inherited from $\gamma$.
Moreover, whenever $(H,\Ss)$ is a biased graph that is a minor of $(G,\Bb_\gamma)$ then $(H,\Ss)$ is realized by the induced gain function $\gamma|_H$ on $E(H)$ inherited from $\gamma$.
We now define these notions and justify this claim.

Let $e$ be an edge of $G$.
We denote by $(G,\gamma)\bs e$ and $(G,\gamma)/e$ the gain graphs obtained by deletion and contraction of $e$ with their induced gain functions defined as follows.
The induced gain function $\gamma|_{G\bs e}$ on $G\bs e$ is the restriction of $\gamma$ to $E(G)-e$.
If $e$ is a loop assigned the identity element of $\Gamma$ by $\gamma$, then $G/e = G \bs e$ so again the induced gain function is just the restriction of $\gamma$ to $E(G)-e$.
If $e$ is a link, then there is switching function $\eta$ such that $\gamma^\eta(e)$ is the identity element of $\Gamma$.
Define the induced gain function $\gamma|_{G/e}$ on $G/e$ to be the restriction of $\gamma^\eta$ to $E(G) - e$.
Finally, suppose $e$ is a loop with $\gamma(e)$ not equal to the identify element of $\Gamma$.
Suppose $e$ is incident to vertex $v \in V(G)$.
For a lift-type contraction of $e$, $(G,\gamma)/e$ is the gain graph $(G \bs e, \iota)$ where $\iota$ is the gain function assigning the identity element of $\Gamma$ to every edge; declare $\iota$ to be the induced gain function on $(G,\gamma)/e$.
For a frame-type contraction of $e$, $(G,\gamma)/e$ is the gain graph
$(G',\gamma|_{G'})$ in which $G'$ is the underlying graph of the biased graph $(G,\Bb_\gamma)/e$ obtained by the frame-type contraction of the joint $e$ defined in Section \ref{sec:graphsandbiaedgraphs} above,
and $\gamma|_{G'}$ is the gain function
whose restriction to $E(G-v)$ is equal to the restriction of $\gamma$ to $E(G-v)$,
that assigns the identity element of $\Gamma$ to each balanced loop of $(G,\Bb_\gamma)/e$,
and assigns $\gamma(e)$ to each new joint of $(G,\Bb_\gamma)/e$.

For link minors, induced gain functions can be defined globally (up to switching) as follows.
Consider a biased graph $\GB$ and two disjoint subsets $K, D \subseteq E(G)$ where $K$ does not contain any loops.
Let $K' \subseteq K$ be a set of edges that induce a maximal forest in $G[K]$, and let $D' = D \cup (K \bs K')$.
Then $(G,\mc B)/K\bs D=(G,\mc B)/K'\bs D'$.
Thus a link minor may always be obtained by contraction of an acyclic set.
Consider a gain graph $(G,\gamma)$ with disjoint subsets $K, D \subseteq E(G)$ where $K$ does not contain a loop.
We obtain an induced gain function $\gamma|_{G/K\bs D}$ for $G/K\bs D$ as follows.
Choose a subset $K' \subseteq K$ such that $G[K']$ is a maximal forest contained in $G[K]$, and choose a maximal forest $F$ of $G$ containing $K'$.
Let $\gamma^\eta$ be the $F$\-/normalization of $\gamma$.
Define $\gamma|_{G/K\bs D}$ to be the restriction of $\gamma^\eta$ to $E(G) - (K \cup D)$.

\begin{prop} \label{P:InequivalencePreservedInContraction}
Let $G$ be a graph and let $F$ be the edge set of a forest in $G$.
Let $\Gamma$ be an abelian group (resp., the additive group of a field).
Then $\vp$ and $\psi$ are switching equivalent (resp., switching-and-scaling equivalent) $\Gamma$\-/gain functions on $G$
if and only if
$\vp|_{G/F}$ and $\psi|_{G/F}$ are switching equivalent (resp., switching-and-scaling equivalent).
\end{prop}

\begin{proof}
Extend $F$ to a maximal forest $F_m$ in $G$ and assume that $\vp$ and $\psi$ are normalized on $F_m$.
If $\vp$ and $\psi$ are switching inequivalent (resp., switching-and-scaling inequivalent), then certainly $\vp\neq\psi$ (resp., $\vp \neq a\psi$ for any scalar $a$), and since both are normalized on $F_m$, their restrictions
to $E(G) \bs F_m$ are not equal (resp., neither is obtained by scaling the other).
Now in $G/F$, the induced gain functions $\vp|_{G/F}$ and $\psi|_{G/F}$ are normalized
on the maximal forest $F_m\bs F$ of $G/F$ and $\vp|_{G/F}\neq\psi|_{G/F}$.
Thus by Proposition \ref{P:NormalizationUnique} $\vp|_{G/F}$ and $\psi|_{G/F}$ are switching inequivalent (resp., switching-and-scaling inequivalent) on $G/F$.

Conversely, if $\vp$ and $\psi$ are switching equivalent (resp., switching-and-scaling equivalent), then by Proposition \ref{P:NormalizationUnique}, $\vp=\psi$ (resp., $\vp = a\psi$ for some scalar $a$), so $\vp|_{G/F} = \psi|_{G/F}$ (resp., $\vp|_{G/F} = a\psi|_{G/F}$).
\end{proof}

We say that biased graph $(G,\mc B)$ has an $(H,\mc S)$\-/minor (respectively, $(H,\mc S)$\-/link minor) when there is a minor (resp., link minor) $(G',\mc B')$ of $(G,\mc B)$ that is isomorphic to $(H,\mc S)$.
The following proposition now follows immediately from the definitions.

\begin{prop} \label{P:InducedRealization}
If $\gamma$ is a $\Gamma$\-/realization of $(G,\mc B)$ and $(G',\mc B')$ is a minor of $(G,\mc B)$, then the induced
gain function $\gamma|_{G'}$ is a $\Gamma$\-/realization of $(G',\mc B')$.
\end{prop}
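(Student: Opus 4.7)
The plan is to prove $\mc B_{\vp|_{G'}} = \mc B|_{G'}$ by induction on the number of edge operations used to obtain $G'$ from $G$, reducing to three atomic cases: deletion of an edge, contraction of a link, and contraction of an unbalanced loop. (Contraction of a balanced loop is by definition the same as deletion.) Each atomic case amounts to a verification that the definition of the induced gain function has been set up to match the definition of $\mc B|_{G'}$, so the argument is essentially bookkeeping.

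Deletion is immediate: $\vp|_{G\setminus e}$ is literally the restriction of $\vp$, and every cycle of $G\setminus e$ is a cycle of $G$, so $\mc B_{\vp|_{G\setminus e}} = \mc B_\vp \cap \mc C(G\setminus e) = \mc B|_{G\setminus e}$. For the contraction of a link $e$, I would switch to an $\vp^\eta$ with $\vp^\eta(e)$ equal to the identity (possible because $e$ is a link), noting that $\mc B_\vp = \mc B_{\vp^\eta}$. Every cycle $C'$ of $G/e$ either already is a cycle of $G$ (not using $e$), or arises from a cycle $C' \cup e$ of $G$ by removing $e$; in either case the gain around $C'$ in $G/e$ under $\vp|_{G/e}$ equals the gain around the corresponding cycle in $G$ under $\vp^\eta$ (trivially in the first case; using $\vp^\eta(e) = 1$ in the second). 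This yields $C' \in \mc B_{\vp|_{G/e}}$ exactly when the corresponding cycle lies in $\mc B$, matching the defining condition of $\mc B|_{G/e}$.

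For the contraction of an unbalanced loop $e$ at a vertex $v$, I would partition the cycles of $G'$ into non-loop cycles and loops. Since every edge of $G'$ incident to $v$ is now a loop, every non-loop cycle of $G'$ lies in the subgraph $G - v$ and inherits its edges and gains from $\vp$ unchanged; hence its balance under $\vp|_{G'}$ agrees with its membership in $\mc B$. The loops of $G'$ come in three flavours — pre-existing loops not at $v$, pre-existing loops $e' \ne e$ at $v$ now re-declared balanced, and new loops obtained from links at $v$ — and each is handled directly by the explicit gain rule in the definition of $\vp|_{G'}$ (identity for balanced loops, a chosen non-identity element for unbalanced ones). The one step requiring any genuine care is making sure the link-contraction case correctly enumerates the two ways a cycle of $G/e$ can lift to $G$; beyond that I anticipate no serious obstacle, since the definitions of the contraction operations and of $\vp|_{G'}$ have been arranged precisely to make the equality hold.
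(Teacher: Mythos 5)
Your proof is correct and matches the paper's approach: the paper declares the proposition ``immediate'' after setting up the definitions of $\vp|_{G'}$ for deletion, link contraction, and unbalanced-loop contraction, and your induction over single edge operations simply spells out the bookkeeping the authors had in mind. The case analysis is complete and each verification is sound, including the two ways a cycle of $G/e$ lifts to $G$ and the three types of loop arising from contracting an unbalanced loop.
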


When $\gamma$ is a $\Gamma$\-/realization of $\GB$ and $(G',\Bb')$ is a minor of $\GB$, we say the $\Gamma$\-/realization $\gamma|_{G'}$ of $(G',\mc B|_{G'})$ is the
\emph{induced $\Gamma$\-/realization} of $(G',\mc B|_{G'})$.

\paragraph{Minors and canonical representations.}
Given a matroid $M$ represented over a field by a matrix $A$, the operation of removing column $e$ from $A$ yields a matrix representation of $M \bs e$.
The operation of applying row operations so that column $e$ contains a unique nonzero element equal to 1 and then removing column $e$ along with the row in which column $e$ is nonzero yields a matrix representation of $M/e$.
With just a little more care, we may apply these usual operations to a canonical matrix representation to obtain a canonical matrix representation given by the corresponding induced gain function on the gain graph minor.
For a canonical matrix $A$, and column $e$ of $A$, denote by $A \bs e$ the matrix obtained by removing column $e$ from $A$.
Denote by $A/e$ a matrix obtained by applying row operations so that column $e$ contains a unique nonzero element equal to 1, say in row $i$, and then removing column $e$ and row $i$ from $A$, subject to the following.
If $A$ is a canonical lift matrix and $e$ is a link, then row $i$ is not the ``gains row" $v_0$.
If $A$ is a canonical frame matrix and if $e$ is a joint, then also scale columns so that for each column $e' \neq e$ with a nonzero entry in row $i$ and with a second nonzero entry in a row $j \neq i$, entry $A_{j e'}$ is equal to $1-\gamma(e)$.

Given an $\FF^\times$-gain function $\vp$ on a graph $G$, denote by $A_F(G,\vp)$ the canonical frame matrix defined by $(G,\vp)$ as described in Section \ref{subsec:gaingraphscanonicalreps}.
Similarly, for an $\FF^+$\-/gain function $\psi$ on $G$, denote by $A_L(G,\psi)$ the canonical lift matrix defined by $(G,\psi)$, as described in Section \ref{subsec:gaingraphscanonicalreps}.
The following lemma is a straightforward consequence of the definitions.

\begin{lem} \label{lem:gain_graph_and_canonical_matrix_minors}
Let $G$ be a graph, let $\FF$ be a field, and let $\vp \: \vec E(G) \to \FF^\times$ and $\psi \: \vec E(G) \to \FF^+$ be gain functions.
Let $e \in E(G)$.

\textup{(i)} $A_F((G,\vp)\bs e) = A_F(G,\vp) \bs e$ and $A_L((G,\vp) \bs e) = A_L(G,\vp) \bs e$.

\textup{(ii)} $A_F((G,\vp)/e) = A_F(G,\vp)/e$, where if $e$ is a joint the contraction operation is of frame type.

\textup{(iii)} $A_L((G,\psi)/e) = A_L(G,\psi)/e$, where if $e$ is a joint the contraction operation is of lift type.
\end{lem}

Evidently, if $A$ and $B$ are projectively equivalent matrices over $\FF$, then so too are $A \bs e$ and $B \bs e$ projectively equivalent, as are $A/e$ and $B/e$.

\subsection{$\Delta$-$Y$ and $Y$-$\Delta$ exchanges}
The operations of $\Delta$-$Y$ and $Y$-$\Delta$ exchange in graphs and in matroids are well understood.
Here we generalize these operations from graphs to biased graphs, and show that an exchange in a biased graph representation agrees with that in the matroid.
We apply $\Delta$-$Y$ and $Y$-$\Delta$ exchanges in a biased graph $\GB$ by performing the usual operation in $G$, then appropriately defining a collection of balanced cycles to define the resulting biased graph.
We use these tools in Sections \ref{sec:unavoidableminors} and \ref{sec:MainThmProofs}.

Let $X=\{a,b,c\}$ be the edge set of a triangle $T$ in a graph $G$, with $V(T) = \{x,y,z\}$, where $a=yz$, $b=xz$, and $c=xy$.
Delete edges $a,b,c$ from $G$, add a new vertex $v$ to $G$, along with three new edges $vx$, $vy$, and $vz$, and label the new edges $a,b,c$ so that $a = vx$, $b=vy$, and $c=vz$.
The resulting graph, in which the triangle $X$ is replaced with a $K_{1,3}$-subgraph with the same set of edges, is said to have been obtained from $G$ by a \emph{$\Delta$-$Y$ exchange}, and is denoted by $\Delta_X G$.

The reverse operation is a \emph{$Y$-$\Delta$ exchange}:
Let $v$ be vertex of degree three in a graph $G$, whose incident edges $Y = \{a,b,c\}$ induce a $K_{1,3}$-subgraph of $G$.
Let $x,y,z$ be the neighbours of $v$ in $G$, where $a=vx$, $b=vy$, and $c=vz$.
Delete $v$ along with its incident edges $a,b,c$, and add three new edges $xy$, $yz$, and $xz$, and label the new edges $a,b,c$ so that $a=yz$, $b=xz$, and $c=xy$.
The resulting graph, in which the $K_{1,3}$-subgraph $Y$ is replaced with a triangle with the same set of edges, is said to have been obtained from $G$ by a \emph{$Y$-$\Delta$ exchange}, and is denoted by $\nabla_Y G$.

We extend these operations to biased graphs as follows.
Let $\GB$ be a biased graph and let $X$ be a balanced triangle of $G$.
Define $\Delta_X \Bb$ to be the collection of cycles of $\Delta_X G$ given by
\begin{linenomath}
\[
\{C \in \Bb:|C\cap X|=0\mbox{ or }2\}\cup\{C \triangle X:C \in \Bb\mbox{ and }|C\cap X|=1\}
\]
\end{linenomath}
where $\triangle$ denotes symmetric difference.
Let $Y$ be a $K_{1,3}$\-/subgraph of $\GB$.
Define $\nabla_Y \Bb$ to be the collection of cycles of $\nabla_Y G$ consisting of $Y$ together with the minimal nonempty members of the set
\begin{linenomath}\[ \{ C : C \in \Bb \text{ and } |C \cap Y| = 0 \text{ or } 2\} \cup \{ C \triangle Y : C \in \Bb \text{ and } |C \cap Y| = 2\}.\]\end{linenomath}

The proofs of the following two propositions are straightforward checks.

\begin{prop}
Let $X$ be a balanced 3-cycle in a biased graph $\GB$.
Then $(\Delta_X G, \Delta_X \Bb)$ is a biased graph.
\end{prop}


\begin{prop}
Let $Y$ be a $K_{1,3}$\-/subgraph of a biased graph $\GB$.
Then $(\nabla_Y G, \nabla_Y \Bb)$ is a biased graph. \end{prop}


We denote the biased graph $(\Delta_X G, \Delta_X \Bb)$ by $\Delta_X\GB$ and the biased graph $(\nabla_Y G, \nabla_Y \Bb)$ by $\nabla_Y \GB$.

Observe that a $\Delta$-$Y$ exchange in a graph $G$ is given by a 3-sum of a labelled $K_4$ and $G$.
To perform a $\Delta$-$Y$ exchange in a matroid $M$, take a copy of $M(K_4)$ on ground set $\{a,b,c,a',b',c'\}$ labelled so that $\{a,b,c\}$ is a triangle, $\{a',b',c'\}$ is a triad, and
each of $\{a,b',c'\}$, $\{a',b,c'\}$, and $\{a',b',c\}$ are triangles.
Let $X=\{a,b,c\}$ be a triangle of $M$, and take the generalized parallel connection of $M$ and $M(K_4)$ across $X$ (see \cite{oxley:mt2011}, Sections 11.4 \& 11.5).
Finally, delete $X$ and relabel each of $a',b',c'$ by $a,b,c$, respectively.
The resulting matroid is said to have been obtained via a \emph{$\Delta$-$Y$ exchange on $X$}, and is denoted $\Delta_X M$.

A $Y$-$\Delta$ exchange in a matroid $M$ is defined via duality.
Let $Y$ be a triad of $M$.
Then $Y$ is a triangle of the dual $M^*$.
Define $\nabla_Y M = (\Delta_Y M^*)^*$.
We say $\nabla_Y M$ is obtained from $M$ via a \emph{$Y$-$\Delta$ exchange on $Y$}.
%

The following proposition is a straightforward consequence of the definitions.
It can be proved by comparing flats.


\begin{prop} \label{prop:DeltaYoperation}
Let $\GB$ be a biased graph.

\textup{(i)} If $X$ is a balanced triangle of $\GB$, then
$F\br{\Delta_X \GB} = \Delta_X F \GB$ and $L\br{\Delta_X \GB} = \Delta_X L \GB$.

\textup{(ii)} Let $Y$ be a $K_{1,3}$\-/subgraph of $G$ for which, if $\GB$ is unbalanced, $\GB-E(Y)$ remains unbalanced.
Then
$F\br{\nabla_Y \GB} = \nabla_Y F\GB$ and
$L\br{\nabla_Y \GB} = \nabla_Y L\GB$.
\end{prop}

The projective equivalence classes of matrix representations of a matroid are well-behaved under $\Delta$-$Y$ exchanges:

\begin{prop}[Whittle {\cite[Lemma 5.7]{MR1710531}}] \label{P:Whittle}
Let $M'$ be a matroid obtained from the matroid $M$ by a single $\Delta$-$Y$ exchange, and let $\FF$ be a field.

\textup{(i)} $M$ is $\FF$\-/representable if and only if $M'$ is $\FF$\-/representable.

\textup{(ii)} The projective equivalence classes of $\bb F$\-/representations of $M$ are in one-to-one correspondence with the projective equivalence classes of $\FF$\-/representations of $M'$.
\end{prop}

Gain functions realizing a biased graph are similarly well-behaved under $\Delta$-$Y$ exchanges.
Proposition \ref{prop:gain_functions_DeltaYs} is an analogue of Proposition \ref{P:Whittle}.

\begin{prop}\label{prop:gain_functions_DeltaYs}
Let $X$ be a balanced triangle of a biased graph $\GB$.
Let $\Gamma$ be the multiplicative (resp., additive) group of a field.

\textup{(i)} $\vp$ is a $\Gamma$\-/realization of $\GB$ if and only if $\vp$ is a $\Gamma$\-/realization of $\Delta_X \GB$.

\textup{(ii)} The switching (resp., switching-and-scaling) classes of $\Gamma$\-/realizations of $\GB$ are in one-to-one correspondence with the switching (resp., switching\-/and\-/scaling) classes of $\Gamma$\-/realizations of $\Delta_X \GB$.
\end{prop}

\begin{proof}
(i) Let $F$ be a maximal forest of $\Delta_X G$ containing $X$.
For each edge $e\in X$, $F-e$ is a maximal forest of $G$ that contains two edges of $X$.
By Proposition \ref{P:NormalizationUnique}, every $\Gamma$\-/realization of $(G,\mathcal B)$ is switching equivalent (resp., switching-and-scaling equivalent) to a
unique (resp., unique up to scaling) $(F-e)$\-/normalized $\Gamma$\-/realization and every $\Gamma$\-/realization of $\Delta_X(G,\mathcal B)$ is switching (resp., switching-and-scaling) equivalent to a unique (resp., unique up to scaling) $F$\-/normalized $\Gamma$\-/realization.
Since $X$ is a balanced triangle of $\GB$, every $(F-e)$\-/normalized $\Gamma$\-/realization of $(G,\mathcal B)$ also has identity gain value on $e$.
Thus a $\Gamma$\-/realization assigning identity gains to each edge in $X$ is a $\Gamma$\-/realization of $(G,\mathcal B)$ if and only if it is a $\Gamma$\-/realization of $\Delta_X(G,\mathcal B)$.
Thus we obtain a canonical bijection between between $\Gamma$\-/gain realizations of $\GB$ and $\Delta_X \GB$, so (ii) holds.
\end{proof}

We now show that a $\Delta$-$Y$ exchange applied to a canonical representation agrees with the exchange applied to its gain graph.
Over any field, a matrix representing the matroid $M(K_4)$ is projectively equivalent to the matrix $I(K_4)$ shown below.
\begin{linenomath}\[I(K_4)= \left(\begin{array}{cccccc} 1 & 0 & 0 & 1 & 1 & 0
\\ 0 & 1 & 0 & -1 & 0 & 1 \\ 0 & 0 & 1 & 0 & -1 & -1 \\ -1 &-1 &-1 & 0 & 0 & 0\\ \end{array} \right)\]\end{linenomath} The first three
columns of this matrix represent a triad and the last three columns a triangle of $K_4$.
Given a matrix $A'$ such that $M(A')$ contains a triangle $X$, $A'$ is projectively equivalent to a matrix $A$ in which the columns corresponding to $X$ are as the last three columns of $I(K_4)$, perhaps with one row omitted or with zero rows added.
Let $\Delta_XA$ denote the matrix obtained from $A$ by replacing the columns for $X$ with first three columns of $I(K_4)$, where possibly the fourth row is omitted or additional zero rows are added.
Then $M(\Delta_X A)=\Delta_X M(A)$ \cite{Brylawski:Sums}.
Similarly if $M(A')$ contains a triad $Y$, then $A'$ is projectively equivalent to a matrix $A$ in which the columns corresponding to $Y$ are as the first three columns of $I(K_4)$, possibly with a row omitted or additional zero rows.
Let $\nabla_Y A$ denote the matrix $A$ obtained by replacing the columns of $Y$ with those of the last three columns of $I(K_4)$.
Then $M(\nabla_Y A)=\nabla_Y M(A)$ \cite{Brylawski:Sums}.
Thus the next fact follows immediately
from Proposition \ref{prop:gain_functions_DeltaYs}.

\begin{prop} \label{P:DeltasAndCanonical}
Let $\FF$ be a field and let $(G,\vp)$ be a $\Gamma$\-/gain graph, where $\Gamma \in \{\FF^\times, \FF^+\}$.
Let $X$ be a balanced triangle of $(G,\Bb_\vp)$ and let $Y$ be a $K_{1,3}$\-/subgraph of $G$.
\begin{enumerate}[label=\textup{(\roman*)}]
\item $\Delta_X A_F(G,\vp)$ is a canonical frame matrix particular to $(\Delta_X G, \vp)$,
\item $\nabla_Y A_F(G,\vp)$ is a canonical frame matrix particular to $(\nabla_Y G, \vp)$,
\item $\Delta_X A_L(G,\vp)$ is a canonical lift matrix particular to $(\Delta_X G, \vp)$,
\item $\nabla_Y A_L(G,\vp)$ is a canonical lift matrix particular to $(\nabla_Y G, \vp)$.
\end{enumerate}
\end{prop}

\subsection{Almost-balanced biased graphs: roll-ups}
\label{sec:rollups}

Let $\GB$ be an almost-balanced biased graph with the property that after deleting its joints it has a unique balancing vertex $u$.
Let $J$ be the set of joints of $\GB$ that are not incident to $u$, and denote by $\delta(u)$ the set of links incident to $u$.
It is not difficult to check that the theta property implies that for each pair $e, e' \in \delta(u)$, either all cycles containing both $e$ and $e'$ are balanced or all cycles containing both $e$ and $e'$ are unbalanced (for details, see \cite[Section 1]{AlmostBalancedReps}).
Observe that, just as for a pair $e, e' \in \delta(u)$ for which every cycle containing both $e$ and $e'$ is balanced, for every pair $f,f' \in J$, every path linking the endpoint of $f$ with the endpoint of $f'$ together with $f$ and $f'$ is a circuit of $F\GB$.
Define $\Sigma_G(u) = \Sigma(u) = \delta(u) \cup J$.
Then for each pair of edges $e_1, e_2 \in \Sigma_G(u)$, either
every minimal path linking the endpoints of $e_1$ and $e_2$ in $G-u$ together with $\{e_1, e_2\}$ forms a circuit in $F\GB$, or each such path together with $\{e_1, e_2\}$ is independent in $F\GB$.
Define a relation $\sim$ on $\Sigma_G(u)$
in which $e_i \sim e_j$ if and only if $i = j$, or there is a balanced cycle containing $e_i$ and $e_j$, or $e_i$ and $e_j$ are both in $J$.
It is straightforward to check that $\sim$ is an equivalence relation \cite[Lemma 1.4]{AlmostBalancedReps}.
We call the $\sim$-equivalence classes partitioning $\Sigma(u)$ the \emph{unbalancing classes} of $\Sigma(u)$, and denote the set of unbalancing classes of $\Sigma(u)$ by $\Sigma(u)/\!\!\sim$.

Now consider the biased graph $\GBu$ obtained from $\GB$ by replacing each joint $e\in J$ incident to a vertex $v \neq u$ with a $uv$-link.
Define $\widehat{\Bb}$ to be the set of those cycles having intersection of size 0 or 2 with each unbalancing class of $\Sigma_G(u)$.
It is straightforward to check by comparing circuits that $F\GBu = F\GB$.
Call $\GBu$ the \emph{unrolling of $(G,\mc B)$ to} $u$.
If $J$ is empty, then set $\GBu = \GB$.
Observe that $u$ is a balancing vertex of $\GBu$, that $\Sigma_{\widehat{G}}(u) = \Sigma_G(u)$, and that
$\Sigma_{\widehat{G}}(u)/\!\!\sim \ = \Sigma_G(u)/\!\!\sim$;
that is, the unbalancing classes of $\Sigma_G(u)$ and of $\Sigma_{\widehat{G}}(u)$ are the same.

For each unbalancing class $U$ of $\Sigma_{\widehat{G}}(u)$ there is a biased graph $(G_U,\Bb_U)$ for which $F(G_U,\Bb_U) = F\GB = F\GBu$, obtained from $\GBu$ by replacing each link $e=uv \in U$ with a joint incident to $v$ (this fact is straightforward to check by comparing circuits; it appears in \cite[Proposition 2.2]{AlmostBalancedReps}).
Call each such biased graph a \emph{roll-up of $\GBu$ from $u$}.
It is a straightforward check that for each unbalancing class $U \in \Sigma_{\widehat{G}}(u)/\!\!\sim$, $\Sigma_{G_U}(u)/\!\!\sim \ = \Sigma_{\widehat{G}}(u)/\!\!\sim$.
Note that $J$ is an unbalancing class of $\Sigma_{\widehat{G}}(u)$, and that $(G_J, \Bb_J) = \GB$.
Define $\Rr_{\GB}$ to be the set of biased graphs consisting of $\GBu$ together with all of its roll-ups.
Since each of these biased graphs shares precisely the same set of unbalancing classes, we may write simply $\Sigma(u)/\!\!\sim$ for this set, when it is clear that we are considering a biased graph in the collection $\Rr_{\GB}$.
Thus $|\Rr_{\GB}| = |\Sigma(u)/\!\!\sim\!\!|+1$.
Finally, observe that since $\GBu$ has no pair of vertex-disjoint unbalanced cycles, $L\GBu = F\GBu$.

There is a special case to consider if $\GB$ is balanced after removing its set of joints $J$.
Let $x$ be a new isolated vertex added to $V(G)$.
Then $J$ may be unrolled to any vertex of $G$, including $x$.
In the case that $J$ is unrolled to $x$, we obtain a balanced biased graph $(H,\Cc(H))$, so $F\GB$ is equal to the cycle matroid $M(H)$ of $H$.
The reverse operation may be applied to any graph.
Given a graph $H$ and a vertex $x \in V(H)$, let $(G,\Bb)$ be the rollup of the set of edges incident to $x$; that is, since the set of edges incident to $x$ is a single unbalancing class, replace each edge $xv$ with a joint incident to its endpoint $v$.
Then $M(H) = F\GB$.

If a biased graph has two distinct balancing vertices, then it has the very restricted form described in Proposition
\ref{prop:Zas_fattheta}.

\begin{prop}[Zaslavsky \cite{MR897095}] \label{prop:Zas_fattheta}
Let $\GB$ be a 2-connected unbalanced biased graph with two distinct balancing vertices $x$ and $y$.
Then $G$ is a union of subgraphs $G_1\cup \dots \cup G_m$ where for each pair $i \not= j$, $G_i \cap G_j = \{x,y\}$, and a cycle is in $\Bb$ if and only if it is contained in a single subgraph $G_i$.
If $m\geq3$ then $x$ and $y$ are the only balancing vertices of $\GB$.
\end{prop}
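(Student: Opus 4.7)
The plan is to construct the subgraphs $G_1,\dots,G_m$ from the bridge decomposition of $G$ at $\{x,y\}$, grouping the bridges under a natural equivalence coming from the theta property. Throughout I exploit that both $x$ and $y$ being balancing means every unbalanced cycle contains both of them and so decomposes as two internally disjoint $xy$-paths, while every cycle avoiding one of $\{x,y\}$ is automatically balanced. First I would form the bridges of $G$ at $\{x,y\}$: for each component of $G-\{x,y\}$ take the subgraph spanned by it, together with all edges from that component to $\{x,y\}$ and the vertices $x,y$; and for each $xy$-edge of $G$ take the trivial bridge consisting of that single edge. By vertical 2-connectivity, any two distinct bridges meet exactly at $\{x,y\}$, their edge sets partition $E(G)$, and every nontrivial bridge has neighbors of both $x$ and $y$.

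The crucial intermediate step is the supporting claim that every cycle $C$ inside a single bridge $B$ is balanced. If $C$ avoids $x$ or $y$, the balancing vertex hypothesis gives this immediately. Otherwise $B$ is nontrivial and $C=P_1\cup P_2$ for internally disjoint $xy$-paths $P_1,P_2\subseteq B$. Choose $u\in V(P_1)\setminus\{x,y\}$, $v\in V(P_2)\setminus\{x,y\}$, and a $uv$-path $R$ in $B-\{x,y\}$ of minimum length over all such triples; by minimality, $R$ is internally disjoint from $V(P_1)\cup V(P_2)$. Together with $R_x$ (follow $P_1$ from $u$ to $x$, then $P_2$ from $x$ to $v$) and $R_y$ (follow $P_1$ from $u$ to $y$, then $P_2$ from $y$ to $v$), the path $R$ produces a theta with branch vertices $u,v$ whose three cycles are $R\cup R_x$, $R\cup R_y$, and $R_x\cup R_y=C$. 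The first two each avoid one of $\{x,y\}$ and so are balanced; the theta property forces $C$ balanced as well.

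With the supporting claim in hand, I would define $B\sim B'$ on bridges by declaring that some $xy$-path in $B$ combined with some $xy$-path in $B'$ forms a balanced cycle. The supporting claim together with the theta property applied to triples of pairwise internally disjoint $xy$-paths yields independence of representatives and transitivity; two $xy$-paths in the same bridge that share internal vertices are handled by iterating the supporting claim on the balanced cycles they carve out of $B$. Let $G_1,\dots,G_m$ be the subgraphs obtained as the union of bridges in each $\sim$-class. Then $G_i\cap G_j=\{x,y\}$ for $i\neq j$, and a cycle lying inside one $G_i$ is either inside a single bridge (balanced by the supporting claim) or the union of two $xy$-paths in $\sim$-equivalent bridges (balanced by definition); conversely a cycle using edges from two distinct $G_i$'s must pass through both $x$ and $y$ and split as two $xy$-paths in $\sim$-inequivalent bridges, hence is unbalanced. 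Unbalancedness of $\Om$ forces $m\geq 2$.

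For the final assertion, suppose $m\geq 3$ and $v\notin\{x,y\}$. Since distinct $G_i$'s share only $\{x,y\}$, $v$ lies in a unique $G_i$, say $G_1$; picking any $xy$-paths $P_2\subseteq G_2$ and $P_3\subseteq G_3$ yields an unbalanced cycle $P_2\cup P_3$ avoiding $v$, so $v$ is not a balancing vertex. The main obstacle is the supporting claim: the length-minimization of $R$ is precisely what secures internal disjointness from $V(P_1)\cup V(P_2)$ and hence the clean theta to which the theta property can be applied directly.
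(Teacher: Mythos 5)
The paper attributes this proposition to Zaslavsky and gives no proof of its own, so there is no paper argument to compare against; I will just assess your proposal on its merits. The approach — decompose at $\{x,y\}$ into bridges, show every cycle confined to a bridge is balanced via a theta with one arm avoiding $x$ and one arm avoiding $y$, group bridges by a balance-induced equivalence, and conclude — is sound and is the natural way to obtain this structure theorem. The supporting claim is proved carefully and correctly: the length-minimisation of $R$ does give internal disjointness from $V(P_1)\cup V(P_2)$, and $R$ automatically avoids $x,y$, so the three arms $R$, $R_x$, $R_y$ really are pairwise internally disjoint and the theta property applies. The final case analysis (a cycle missing $x$ or $y$ lives in one bridge; a cycle through both splits into two $xy$-paths, each confined to one bridge) is also correct, and the $m\geq 3$ argument is fine.

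The one place that is a gesture rather than a proof is the sentence ``two $xy$-paths in the same bridge that share internal vertices are handled by iterating the supporting claim on the balanced cycles they carve out of $B$.'' That is not quite how the argument should run: the cycles carved out of $B$ by $P\cup P'$ are balanced, yes, but the theta property does not let you ``add'' those facts to transfer balance of $P\cup Q$ to $P'\cup Q$ without involving $Q$ itself. The correct way to finish is a detour induction: if $P\neq P'$, let $a$ be the last vertex with $P[x,a]=P'[x,a]$, let $b$ be the first vertex after $a$ at which $P'$ rejoins $P$, and set $D=P'[a,b]$. Then $P[a,b]$, $D$, and the ``long'' arm $P[a,x]\cup Q\cup P[y,b]$ form a theta with branch vertices $a,b$ whose three cycles are $P[a,b]\cup D$ (in $B$, hence balanced by the supporting claim), $P\cup Q$, and $P''\cup Q$ where $P''=P'[x,b]\cup P[b,y]$; so $P\cup Q$ and $P''\cup Q$ have the same bias, and $P''$ agrees with $P'$ strictly longer than $P$ did. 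Iterating reaches $P'$. This is the only real gap, and it is a fixable one. (A cosmetic omission: balanced loops at $x$ or $y$ are not placed in any bridge by your construction; since the balancing hypotheses force every loop at $x$, $y$, or elsewhere to be balanced, they may be absorbed into any $G_i$ without affecting the conclusion.)
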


Observe that if a biased graph $\GB$ of the form described in Proposition \ref{prop:Zas_fattheta} has a subgraph $G_i$ with at least two edges, then $(E(G_i), E(G)-E(G_i))$ is a 2-separation of both $L\GB$ and $F\GB$.

\subsection{Full-rank canonical lift-matrix representations}
\label{sec:on_canonical_lift_representations}

Let $G$ be a graph and let $\gamma$ be an $\FF^+$-gain function on $G$, for some field $\FF$.
The canonical lift matrix $A_L(G,\gamma)$ consists of the oriented incidence matrix of the subgraph of $G$ induced by its links together with a row $v_0$ of gains.
It is sometimes inconvenient that this matrix is not of full rank.
Choosing one vertex in each component of $G$, and deleting the rows of $A_L(G,\gamma)$ indexed by these vertices
yields a matrix representation of $L(G,\Bb_\gamma)$ that is of full rank.
In the case $G$ is connected, just one row is removed, and the oriented incidence matrix of $G$ is recovered from the resulting matrix by appending a row
equal to the negation of the sum of all rows but $v_0$.
When $G$ is connected, let us denote by $A_L^{-v}(G,\gamma)$ the matrix obtained from $A_L(G,\gamma)$ by deleting the row indexed by the vertex $v \in V(G)$.
Then $A_L^{-v}(G,\Bb_\gamma)$ is a full-rank matrix representing $L(G,\Bb_\gamma)$.
Clearly, for any vertex $v \in V(G)$ the matrices $A_L^{-v}(G,\gamma)$ and $A_L(G,\gamma)$ are projectively equivalent, and for any pair of vertices $u,v \in V(G)$, the matrices $A_L^{-u}(G,\gamma)$ and $A_L^{-v}(G,\gamma)$ are projectively equivalent.
We say $A_L^{-v}(G,\gamma)$ is a \emph{full-rank canonical lift matrix representation} of $L(G,\Bb_\gamma)$.
In the case that $(G,\Bb_\gamma)$ has a balancing vertex $u$ after deleting its joints, it will be convenient to use $A_L^{-u}(G,\gamma)$ as the full-rank canonical lift matrix representation of $L(G,\Bb_\gamma)$.

\section{Unavoidable minors}
\label{sec:unavoidableminorsandsubdivisions}

In this section we prove Theorems \ref{MT:UnavoidableMinors},  \ref{MT:Unavoidsubdivisions}, and \ref{MT:InequivalenceLocalized}.
We show that there is a small collection of biased graphs, at least one of which must appear as a minor in every 2-connected biased graph.
From this collection we obtain a slightly larger collection of biased graphs, and show that every 2-connected properly unbalanced biased graph contains a subdivision of at least one of these biased graphs.
We prove an analogous result for 2-connected almost-balanced biased graphs, which we require for the proof of Theorem \ref{thm:all_reps_eqiv_to_cannonical}.
Finally, we show that inequivalence of gain functions may always be found on a one of small number of unavoidable minors.

\subsection{The minor-minimal, 2-connected, properly unbalanced biased graphs} \label{sec:base_biased_graphs}

Let $\Gg_0$ denote the set of minor-minimal biased graphs that are 2-connected and properly unbalanced.
We first describe 13 biased graphs in $\Gg_0$, then show that these 13 biased graphs form the complete set.
Recall that we denote the graph obtained from a 3-cycle by replacing each edge with a pair of parallel edges by $2C_3$, and that we call
the graph obtained from a 4-cycle by
replacing each edge in a pair of non-adjacent edges with a pair of parallel edges the tube, and denote it by $2C_4''$.
Six of the biased graphs in $\mathcal{G}_0$ have underlying graph $2C_3$, three have underlying graph $2C_4''$, and four have underlying graph $K_4$.

The set of cycles of the graph $K_4$ consists of four triangles and three quadrilaterals.
We denote by $\mathsf D_{t,q}=(K_4,\mc B_{t,q})$ the biased $K_4$ with exactly $t$ balanced triangles and $q$ balanced 4-cycles.
There are seven biased $K_4$'s: $\mathsf D_{0,0}$, $\mathsf
D_{0,1}$, $\mathsf D_{0,2}$, $\mathsf D_{0,3}$, $\mathsf D_{1,0}$, $\mathsf D_{2,1}$, and $\mathsf D_{4,2}$  \cite{Zaslavsky:BG1}.
A biased $K_4$ is properly unbalanced if and only if it does not contain a balanced triangle.
Thus the properly unbalanced $K_4$'s are $\mathsf D_{0,0}$,
$\mathsf D_{0,1}$, $\mathsf D_{0,2}$, and $\mathsf D_{0,3}$.

\begin{figure}[tbp] \begin{center} \includegraphics[height=45pt,trim= 0pt 20pt 0pt -10pt,page=1]{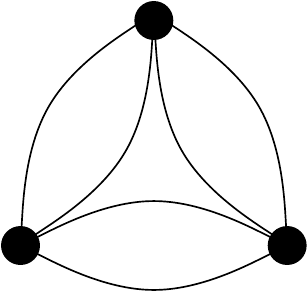}\hspace{2cm}
\begin{tabular}{cc} {Name} & {Balanced cycles} \\
\hline $\mathsf T_0$ & none\\
\hline $\mathsf T_1$ & \includegraphics[height=15pt,trim= 0pt 20pt 0pt -10pt,page=2]{DoubleTriangle}\\
\hline $\mathsf T_2$ & \includegraphics[height=15pt,trim= 0pt 20pt 0pt -10pt,page=4]{DoubleTriangle}\\
\hline $\mathsf T_{2}'$ & \includegraphics[height=15pt,trim= 0pt 20pt 0pt -10pt,page=3]{DoubleTriangle}\\
\hline $\mathsf T_3$ & \includegraphics[height=15pt,trim= 0pt 20pt 0pt -10pt,page=5]{DoubleTriangle}\\
\hline $\mathsf T_4$ & \includegraphics[height=15pt,trim= 0pt 20pt 0pt -10pt,page=6]{DoubleTriangle}\\
\hline
\end{tabular}
\end{center}
\caption{The graph $2C_3$ and its six possible classes of balanced cycles not containing a cycle of length two.} \label{F:Biased2C3}
\end{figure}

\begin{prop} \label{P:Biased2C3s} There are six unlabelled properly unbalanced biased $2C_3$'s.
\end{prop}

\begin{proof}
A biased graph $(2C_3,\Bb)$ is properly unbalanced if and only if $\Bb$ does not contain a 2-cycle.
Thus by theta property, $\Bb$ is a collection of triangles pairwise intersecting in at most one edge.
There are eight triangles in $2C_3$; any set of five contains a pair that intersect in more than one edge.
Hence $\Bb$ contains at most 4 triangles.
The possibilities are shown in Figure \ref{F:Biased2C3}.
\end{proof}

A biased tube is properly unbalanced if and only if it has no balanced 2-cycle. There are
three such tubes, described in Figure \ref{F:BiasedTube}.

\begin{figure}[tbp]
\begin{center}
\includegraphics[height=35pt,trim= 0pt 20pt 0pt -10pt,page=3]{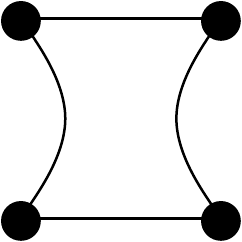}
\hspace{2cm}
\begin{tabular}{cc} {{Name}} & {{Balanced cycles}} \\\hline $\mathsf
B_0$ & none\\\hline $\mathsf B_1$ & \includegraphics[height=15pt,trim= 0pt 20pt 0pt -10pt,page=1]{BiasedTubes}\\\hline
$\mathsf B_2$ & \includegraphics[height=15pt,trim= 0pt 20pt 0pt -10pt,page=2]{BiasedTubes}\\\hline \end{tabular}
\caption{The graph $2C_4''$ and its three possible classes of balanced cycles not containing a cycle of length two.}
\label{F:BiasedTube}
\end{center}
\end{figure}

We now show that the set
of biased graphs consisting of the four biased $K_4$'s with no balanced triangle, the six biased $2C_3$'s with no balanced 2-cycle, and the three biased tubes with no balanced 2-cycle,
\begin{linenomath}
\[
\{\mathsf{D_{0,0}, D_{0,1}, D_{0,2}, D_{0,3}, T_0, T_1, T_2, T_2', T_3, T_4, B_0, B_1, B_2}\} ,
\]
\end{linenomath}
forms the complete collection $\mathcal{G}_0$ of minor-minimal, 2-connected, properly unbalanced biased graphs.

A \emph{subdivision} of a biased graph $(G,\mc B)$ is a biased graph $(H,\mc S)$ in which $H$ is a subdivision of $G$ and a cycle $C$ of $H$ is in $\mc S$ if and only if its corresponding cycle $C'$ of $G$ is in $\mc B$.
Proposition \ref{P:TubeMinor} follows immediately from Menger's Theorem.

\begin{prop} \label{P:TubeMinor}
Let $\GB$ be a 2-connected biased graph.
If $\GB$ contains a vertex-disjoint pair of unbalanced cycles, neither of which is a loop, then $\GB$ contains a subdivision of $\mathsf B_0$, $\mathsf B_1$, or $\mathsf B_2$.
\end{prop}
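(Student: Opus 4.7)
The plan is to pull out the desired subdivision using Menger's theorem in its vertex-set form. Since $C_1$ and $C_2$ are non-loop cycles, $|V(C_1)|, |V(C_2)| \geq 2$; combined with vertex-disjointness this gives $|V(G)| \geq 4$, so vertical 2-connectivity coincides with ordinary 2-connectivity of $G$. Hence $G-v$ is connected for every vertex $v$, and a brief case analysis on whether $v$ lies in $V(C_1)$, in $V(C_2)$, or in neither shows that no single vertex separates $V(C_1)$ from $V(C_2)$ in $G$. Menger's theorem then yields two pairwise vertex-disjoint paths $P_1, P_2$ from $V(C_1)$ to $V(C_2)$; choosing such a pair of minimum total length forces the internal vertices of each $P_i$ to lie outside $V(C_1)\cup V(C_2)$, while vertex-disjointness already provides distinct endpoints $u_1\neq u_2$ on $C_1$ and $v_1\neq v_2$ on $C_2$.

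Let $H := C_1\cup P_1\cup C_2\cup P_2$. The only vertices of $H$ of degree $\geq 3$ are $u_1,u_2,v_1,v_2$, each of degree exactly $3$; suppressing the degree-$2$ vertices yields two edges joining $u_1$ to $u_2$ (the two arcs of $C_1$), two edges joining $v_1$ to $v_2$ (the two arcs of $C_2$), and single edges $u_1v_1$ and $u_2v_2$ coming from $P_1$ and $P_2$. This is exactly $2C_4''$, so $H$ is a subdivision of $2C_4''$. Equipping $H$ with the biased structure inherited from $\GB$, the two $2$-cycles of the underlying $2C_4''$ correspond to $C_1$ and $C_2$, both of which are unbalanced by hypothesis. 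Since $\mathsf B_0, \mathsf B_1, \mathsf B_2$ exhaust the biased versions of $2C_4''$ with no balanced $2$-cycle (Figure \ref{F:BiasedTube}), $H$ is a subdivision of one of them.

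The one point requiring care is that the two Menger paths must attach to $C_1$ (and to $C_2$) at distinct vertices; otherwise we would obtain only a theta-like configuration, not a tube. This is precisely why the pairwise-vertex-disjoint set form of Menger is applied to $V(C_1)$ and $V(C_2)$, rather than the simpler two-point form after contracting $C_1$ and $C_2$, which could still return both paths through a common vertex of one of the cycles.
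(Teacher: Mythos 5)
Your proof is correct and follows the same route the paper intends: the paper gives no details, simply remarking that the proposition ``follows immediately from Menger's Theorem,'' and you have filled in exactly the Menger argument (set form applied to $V(C_1)$ and $V(C_2)$) that it is alluding to, including the necessary care that the two connecting paths attach at distinct vertices of each cycle.
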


Thus to prove Theorem \ref{MT:UnavoidableMinors} it remains just to show that a properly unbalanced biased graph without two vertex-disjoint unbalanced cycles has a
link minor from $\mc G_0$.
A properly unbalanced biased graph with no two vertex-disjoint unbalanced cycles is \emph{tangled}.
The structure of tangled signed graphs was characterized by Slilaty \cite{MR2344133} and the structure of tangled biased graphs in general was characterized by Chen and Pivotto \cite{2014arXiv1403.1919C}.
The following theorem could be proven
as a consequence of Chen and Pivotto's work in \cite{2014arXiv1403.1919C}, but the direct proof we present here seems no more difficult.

\begin{thm} \label{T:TangledMinor}
Every tangled biased graph contains  as a link minor either a biased $2C_3$ with no balanced 2-cycle or a biased $K_4$ with no balanced triangle.
\end{thm}

\begin{lem} \label{L:UntanglingThree}
Let $\Omega$ be a tangled biased graph.
Assume $\Om$ contains an unbalanced cycle $C$ and a pair of unbalanced cycles $C_x$ and $C_y$ such that $V(C) \cap V(C_x) = \{x\}$, $V(C) \cap V(C_y) = \{y\}$, and $x \neq y$.
Then $C \cup C_x \cup C_y$ contains a biased $2C_3$ with no balanced 2-cycle as a link minor.
\end{lem}

\begin{proof}
Since $\Omega$ is tangled, $(C_x\cup C_y)-\{x,y\}$ is connected; furthermore, it is vertex-disjoint from $C$ and
so balanced.
Let $K$ be the edge set of $(C_x\cup C_y)-\{x,y\}$.
Then $(C\cup C_x\cup C_y)/K$ is a link minor of $C\cup
C_x\cup C_y$ and is a subdivision of a biased $2C_3$ with no balanced 2-cycle. The result follows.
\end{proof}

\begin{proof}[Proof of Theorem \ref{T:TangledMinor}]
Let $\Omega$ be a link-minor-minimal counterexample.
Then $|V(\Om)| > 2$ and $\Om$ has no joint, else $\Om$ would not be tangled.
If $\Om$ has more than one unbalanced block but no two disjoint unbalanced cycles, then $\Om$ must have a balancing vertex, a contradiction. Hence $\Omega$ has only one unbalanced block. Evidently our desired minor exists in $\Omega$ if and only if it exists in the unbalanced block of $\Omega$.
Hence by minimality $\Omega$
is 2-connected.
By minimality we may also assume that $\Omega$ has no balanced 2-cycles.

\begin{claim} \label{claim:NoUnbalanced2Cycle} The underlying graph of $\Omega$ is simple. \end{claim} \begin{cproof} By
way of contradiction assume that $C$ is an unbalanced 2-cycle in $\Omega$ with vertices $x$ and $y$. Thus $\Omega-x$
contains an unbalanced cycle $C_y$ passing through $y$ and $\Omega-y$ contains an unbalanced cycle $C_x$ passing
through $x$. By Lemma \ref{L:UntanglingThree}, $C_x\cup C_y\cup C$ contains a biased $2C_3$ without a balanced 2-cycle,
a contradiction.
\end{cproof}

If $\Omega$ has just three vertices, then the underlying graph of $\Omega$ is a triangle.
This is not the case, as a biased triangle has a balancing vertex.
If $\Om$ has exactly four vertices, then $\Omega-v$ is an unbalanced triangle for each vertex $v$, else $\Om$ has a balancing vertex.
But then $\Omega$ is a biased $K_4$ without a balanced triangle, a contradiction.
Thus $|V(\Omega)| \geq 5$.

\begin{claim} \label{clm:Omega-v} For each vertex $v$, $\Omega - v$ is unbalanced and has a balancing vertex.
\end{claim} \begin{cproof} Minimality implies that for any vertex $v$ in $\Omega$, $\Omega - v$ is not tangled. Since
$\Om-v$ is unbalanced and has no two disjoint unbalanced cycles, it must have a balancing vertex. \end{cproof}

Given an edge $e$ with endpoints $x$ and $y$, we denote the vertex in $\Om/e$ resulting from the identification of $x$
and $y$ by $v_e$ or $v_{xy}$.

\begin{claim} \label{claim:v_euniquebalvertex} For each edge $e$, $\Om/e$ has $v_e$ as its unique balancing vertex.
\end{claim}
\begin{cproof} By minimality, $\Omega/e$ is not tangled and has no two vertex-disjoint unbalanced cycles and
so must therefore have a balancing vertex.
If there is a balancing vertex $u \neq v_e$, then every unbalanced cycle of $\Om/e$ passes through $u$, so every unbalanced cycle of $\Om$ passes through $u$.
But this implies that $u$ is a balancing vertex of $\Om$, a contradiction.
\end{cproof}

\begin{claim} \label{claim:NoBalanced2SepTangled} $\Omega$ does not have a vertical 2-separation $(A,B)$ in which $B$ is
balanced. \end{claim}
\begin{cproof} Suppose, for a contradiction, that $(A,B)$ is a vertical 2-separation in which $B$
is balanced. Let $\{x,y\} = V(A) \cap V(B)$, and let $e$ be an edge in $B$ not incident to at least one of $x$ and $y$.
By Claim \ref{claim:v_euniquebalvertex}, $\Omega/e$ has balancing vertex $v_e$. By our choice of $e$, $(A,B \setminus
e)$ is a 2-separation of $\Om/e$, and $V(A) \cap V(B \setminus e)$ is either $\{x,y\}$, $\{x,v_e\}$ or $\{v_e,y\}$. In
any case, since $B \setminus e$ is balanced, every unbalanced cycle of $\Omega/e$ either does not intersect $B
\setminus e$ or intersects $B \setminus e$ in the edges of a path linking the two vertices of $V(A) \cap V(B \setminus e)$.
This implies that there is a vertex $v\in\{x,y\}$ such that every unbalanced cycle in $\Omega$ contains $v$.
This means $v$ a balancing vertex of $\Omega$, a contradiction.
\end{cproof}

\begin{claim} \label{claim:No2SepTangled} $\Omega$ is 3-connected. \end{claim} \begin{cproof} Suppose that
$\Omega$ has a vertical 2-separation $(A,B)$ and let $\{x,y\}=V(A)\cap V(B)$. By Claim
\ref{claim:NoBalanced2SepTangled}, neither $A$ nor $B$ is balanced. Since $\Omega$ does not have a balancing vertex
both $\Omega-x$ and $\Omega-y$ are unbalanced. Let $C_x$ be an unbalanced cycle in $\Omega-x$ and $C_y$ be an unbalanced cycle in $\Omega-y$.
Without loss of generality, assume $E(C_x) \subseteq A$.
Since $\Om$ has no pair of vertex-disjoint unbalanced cycles and $C_y$ does not contain $y$, this implies that also $E(C_y) \subseteq A$.
Hence $E(C_x)\cup E(C_y)\subseteq A$.
Since $B$ is unbalanced, there is an unbalanced cycle $C'$ in $\Omega[B]$; since $C'$ is vertex-disjoint from neither $C_x$ nor $C_y$, $C'$ meets both vertices $x$ and $y$.
Thus by Lemma \ref{L:UntanglingThree}, $C_x\cup C_y\cup C'$ contains a biased $2C_3$ having no balanced 2-cycle, a contradiction. \end{cproof}

Now let $e=xy$ be an edge of $\Om$ and let $E_1,\ldots,E_m$ be the unbalancing classes of edges incident to balancing vertex $v_{xy}$ in $\Om/e$.
Since $\Om/e$ is unbalanced, $m \geq 2$.
Let $E_{x,i}$ be the set of edges of $E_i$ that are incident to $x$
in $\Omega$ and let $E_{y,i}$ be the set of edges of $E_i$ that are incident to $y$ in $\Omega$. Since $\Omega-y$ is unbalanced, at
least two of $E_{x,1},\ldots,E_{x,m}$ are nonempty; similarly, at least two of $E_{y,1},\ldots, E_{y,m}$ are
nonempty. Let $X$ be the set of vertices in $\Omega-y$ adjacent to $x$, and let $Y$ be the set of vertices in $\Omega-x$
that are adjacent to $y$. Since the underlying graph of $\Omega$ is simple, $|X|\geq2$ and $|Y|\geq2$. Now take
$x_1,x_2\in X$ so that edges $xx_1$ and $xx_2$ are in different sets $E_{x,1},\ldots,E_{x,m}$;
similarly, take $y_1, y_2 \in Y$ so that edges $yy_1$ and $yy_2$ are in different sets $E_{y,1},\ldots,E_{y,m}$.
Since the underlying graph of $\Omega$ is simple, $x_1\neq x_2$ and $y_1\neq y_2$.

\begin{claim} \label{claim:NotFour} Vertices $x_1, x_2, y_1, y_2$ cannot be chosen so that
$\{x_1,x_2\}\cap\{y_1,y_2\}=\emptyset$. \end{claim} \begin{cproof}
Suppose to the contrary that $\{x_1,x_2\} \cap \{y_1,y_2\}$ is empty.
Since $\Omega$ is 3-connected, there is an $x_1$-$x_2$ path $P$ in $\Omega-\{x,y\}$.
Because edges $xx_1$ and $xx_2$ are in different sets $E_{x,1}, \ldots, E_{x,m}$, the cycle $xx_1Px_2x$ is unbalanced.
For $i\in\{1,2\}$ let $e_i$ denote the $yy_i$-edge in $\Omega$. Since $v_{yy_i}$
is a balancing vertex in $\Omega/e_i$ (by Claim \ref{claim:v_euniquebalvertex}), the path $P$ must contain $y_1$ and $y_2$.
Hence there is a $y_1$-$y_2$ path $P'$ properly contained in $P$ that avoids both $x_1$ and $x_2$.
Because edges $yy_1$ and $yy_2$ are in different sets $E_{y,1},\ldots,E_{y,m}$, the cycle $C = y y_1 P' y_2 y$ is unbalanced.
But $C$ avoids $x_1$, $x_2$, and $x$, and so avoids the balancing vertex $v_{xx_1}$ in $\Om/xx_1$, so this is a contradiction.
\end{cproof}

\begin{claim} \label{claim:NotThree} Vertices $x_1, x_2, y_1, y_2$ cannot be chosen so that
$|\{x_1,x_2\}\cap\{y_1,y_2\}|=1$. \end{claim} \begin{cproof} By way of contradiction assume that
$|\{x_1,x_2\}\cap\{y_1,y_2\}|=1$ where, without loss of generality, $x_2=y_1$. As in the proof of Claim
\ref{claim:NotFour}, any $x_1$-$x_2$ path $P$ in $\Omega-\{x,y\}$ must contain $y_2$.
Thus there is a $y_1$-$y_2$ path $P'$
properly contained in $P$ and avoiding $x_1$, leading to the contradiction that there is an unbalanced cycle in $\Om/xx_1$ avoiding the balancing vertex $v_{xx_1}$. \end{cproof}

By Claims \ref{claim:NotFour} and \ref{claim:NotThree}, every choice of $x_1$, $x_2$, $y_1$, $y_2$ has (without loss of
generality) $x_1=y_1$ and $x_2=y_2$. This implies that $X=\{x_1,x_2\}$ = $Y=\{y_1,y_2\}$, since otherwise we may choose
a third vertex in $X$ or $Y$ so that $|\{x_1,x_2\}\cap\{y_1,y_2\}|\in\{0,1\}$. Let $e'$ and $e''$ be respectively the
$xx_1$- and $xx_2$-edges in $\Omega$ and let $f'$ and $f''$ be the $yx_1$- and $yx_2$-edges in $\Omega$. It cannot be
that $e'$ and $f'$ are in the same unbalancing equivalence class $E_j\in\{E_1,\ldots, E_m\}$ because then $\Omega-x_2$
would be balanced. Similarly $e''$ and $f''$ are not in the same equivalence class. Because $\Omega$ is 3-connected, there is an $x_1x_2$-path $P$ in $\Omega-\{x,y\}$. The subgraph of $\Omega$ on edges
$E(P)\cup\{e,e',e'',f',f''\}$ is a subdivision of $K_4$ without a balanced triangle, a contradiction. \end{proof}

\subsection{Unavoidable topological subgraphs}

As is often the case with graphs, minors are harder to work with than subgraphs.
In this section we prove Theorem \ref{MT:Unavoidsubdivisions}, an analogue of Theorem \ref{MT:UnavoidableMinors} for biased topological subgraphs.
We also prove a result on unavoidable biased topological subgraphs for almost-balanced biased graphs.

Let $\Pr$ denote the biased graph whose underlying graph is the triangular prism, with just its two triangles balanced
(Figure \ref{fig:T2PrimeSplit}).
Let $L$ denote the matching of three edges linking the two triangles of $\Pr$.
Observe that $\Pr/L\cong\mathsf T_2$.
Let
$\Pr_2$
and
$\Pr_1$,
respectively, be the biased graphs obtained from $\Pr$ by contracting 1 and 2 edges of $L$, respectively (so $\Pr_2$ has two edges of $L$ remaining, and $\Pr_1$ has just one edge of $L$ remaining from $\Pr$).
Set $\Tt_0 = \Gg_0 \cup \{\Pr, \Pr_1, \Pr_2\}$.
Theorem \ref{MT:Unavoidsubdivisions} guarentees that at least one of the biased graphs in $\Tt_0$ is unavoidable as a biased topological subgraph in 2-connected properly unbalanced biased graphs.

\begin{figure}[tbp]
\begin{center}
\includegraphics[scale=0.9]{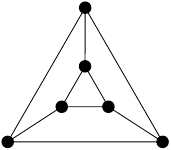}
\end{center}
\caption{$\Pr$ has exactly two triangles, which comprise its set of balanced cycles.}
\label{fig:T2PrimeSplit}
\end{figure}

\begin{proof}[Proof of Theorem \ref{MT:Unavoidsubdivisions}]
If $\Omega$ is not tangled, then the result follows from Proposition \ref{P:TubeMinor}.
So assume that $\Omega$ is tangled.
By Theorem \ref{T:TangledMinor}, $\Omega$ contains a link minor $(G,\mc B)$ that is either a biased $K_4$ with no balanced triangle or a biased $2C_3$ with no balanced 2-cycle.
In the first case, since $K_4$ is 3-regular $\Om$ contains a subdivision of $\GB$.
In the second case, either $\Omega$ contains as a subgraph a subdivision of $(G,\mc B)$ or $\Omega$ contains a link minor
$(G',\mc B')$ that is 2-connected, has minimum degree 3, and contains an edge $e'$ for which $(G',\mc B')/e' = (G,\mc B)$.
Since $\Omega$ is tangled,
$G'$ is as shown in Figure \ref{F:DoubleTriangleSplitToK4-0}.
\begin{figure}[tbp]
\begin{center}
\includegraphics[scale=0.9]{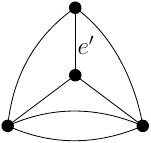}
\end{center}
\caption{The graph $G'$.}\label{F:DoubleTriangleSplitToK4-0} \end{figure}
It is straightforward to check that if $(G,\mc B)\ncong\mathsf T_2$, then $(G',\mc B')$
contains a $K_4$ with no balanced triangle, and so our desired subdivision.
So assume that $(G,\mc B)\cong\mathsf T_2$.
Then either $\Bb'$ consists of a pair of edge-disjoint triangles both avoiding $e'$ or $\Bb'$ consists of a pair of 4-cycles each of which contain $e'$ but are otherwise edge-disjoint.
In the latter case, we again have a biased $K_4$ with no balanced triangle as a subgraph, and so are done.
In the former case, $(G',\Bb') \cong \Pr_1$.
Thus either $\Omega$ contains a subdivision of $\Pr_1$
or
$\Omega$ contains as a link minor $(G'',\mc B'')$ which has minimum degree 3 and an edge $e''$ for which $(G'',\mc B'')/e'' \cong \Pr_1$.
Either $\Bb''$ consists of a pair of 4-cycles sharing just $e''$ or $\Bb''$ consists of a pair of edge-disjoint triangles avoiding $\{e',e''\}$.
Thus either $(G'',\Bb'')$ contains as a subdivision a biased $K_4$ with no balanced triangle, in which case we are done,
or
$(G'',\mc B'')$ is isomorphic to $\Pr_2$.
In the latter case, either $\Omega$ contains a subdivision of $\Pr_2$ or $\Om$ contains a link minor $(G''', \Bb''')$ with minimum degree 3 and an edge $e'''$ for which $(G''',\Bb''')/e''' \cong \Pr_2$.
Thus $\Bb'''$ either consists of a pair of disjoint triangles both avoiding $\{e',e'',e'''\}$ or a pair of 4-cycles sharing just $e'''$.
But if $\Bb'''$ consists of a pair of 4-cycles, then $(G''',\Bb''')$ contains a pair of vertex-disjoint unbalanced triangles, contradicting the fact that $\Om$ is tangled.
Hence it must be the case the $\Bb'''$ consists of a pair of disjoint triangles, so $(G''', \Bb''') \cong \Pr$.
\end{proof}

We now show that every almost-balanced biased graph containing a contrabalanced theta contains as a biased topological subgraph one of the biased graphs in following collection.
Denote the graph obtained from $2C_3$ by deleting an edge by $2C_3 \bs e$.
The graph $2C_3\bs e$ is obtained from
the tube $2C_4''$ by contracting one of its non-doubled links, so the cycles of $2C_3\bs e$ are in bijective correspondence with the cycles of $2C_4''$.
Thus there are exactly three biased graphs $(2C_3\bs e,\mc B)$ without a balanced 2-cycle, each obtained as a single-edge contraction of $\mathsf B_0$, $\mathsf B_1$, or $\mathsf B_2$ (Figure \ref{F:BiasedTube}).
We denote these biased graphs by
$\mathsf B_0'$, $\mathsf B_1'$, and $\mathsf B_2'$, respectively.
Recall that $\mathsf D_{1,0} =(K_4,\Bb)$ where $\Bb$ consists of exactly one balanced triangle; $D_{1,0}$ has a unique balancing vertex.

\begin{prop} \label{P:UniqueBalancingVertex}
Let $(G,\mc B)$ be a 2-connected biased graph that contains a contrabalanced theta subgraph and a unique balancing vertex after removing joints.
Then $(G,\mc B)$ contains a subdivision of $\mathsf
D_{1,0}$, $\mathsf B_0'$, $\mathsf B_1'$, or $\mathsf B_2'$. \end{prop}

\begin{proof}
Denote by $nK_2$ the graph consisting of two vertices with $n$ links between them.
Since $(G,\mc B)$ contains a contrabalanced theta
subgraph, it contains a subdivision of $(nK_2,\emptyset)$ for some $n\geq 3$.
Let $K$ be such a subdivision in $(G,\mc B)$ with $n$ as large as possible.
Let $u$ be the balancing vertex of $\GB$.
One of the two degree-$n$ vertices of $K$ is $u$;
let $v$ be the other degree-$n$ vertex of $K$.
Then $K$ is the union of $n$ internally disjoint $u$-$v$-paths $P_1,\ldots,P_n$.
By assumption $\GB$ does not have the structure described in Proposition \ref{prop:Zas_fattheta}.
Thus there is a path $P$ in $G$ internally disjoint from $K$
with both its ends in $K$ such that either
\begin{itemize}
\item both ends of $P$ are internal vertices of two distinct paths $P_i$ and $P_j$, or
\item $P$ has $u$ as one end, an internal vertex of one of the paths $P_i$ as its other end, and the cycle contained in $P \cup P_i$ is unbalanced.
\end{itemize}
In the first case, $P_i \cup P_j \cup P$ is a theta subgraph with its cycle $P_i \cup P_j$ unbalanced and its cycle avoiding $u$ balanced.
By the theta property the cycle in $P_i \cup P_j \cup P$ avoiding $v$ is unbalanced.
Thus $K \cup P$ contains a subdivision of $D_{1,0}$.
In the second case, $K \cup P$ contains a subdivision of $\mathsf B_0'$, $\mathsf B_1'$, or $\mathsf B_2'$.
\end{proof}

\subsection{Confining inequivalence to a small minor}

We can now prove Theorem \ref{MT:InequivalenceLocalized}.

\begin{figure}[tbp]
\begin{center}
\includegraphics[scale=0.95]{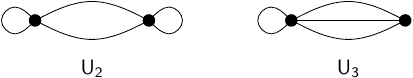}
\end{center}
\caption{Two contrabalanced biased graphs
representing $U_{2,4}$.}
\label{fig:U24unlabelled}
\end{figure}

Two biased graphic representations of $U_{2,4}$ are $\mathsf U_2$ and $\mathsf{U_3}$, shown in Figure \ref{fig:U24unlabelled}; all cycles in each are unbalanced.
Theorem \ref{MT:InequivalenceLocalized} localizes switching inequivalence of gain functions on a small minor: if not a biased graph in $\Gg_0$ then on both $\mathsf U_2$ and $\mathsf U_3$.

\begin{proof}[Proof of Theorem \ref{MT:InequivalenceLocalized}]
By Theorem \ref{MT:Unavoidsubdivisions}, $(G,\mc B)$ has a biased subgraph $(G_0,\mc B_0)$ that is a subdivision of a member of
$\mc G_0\cup\{\Pr_1, \Pr_2, \Pr\}$. Since $G$ is 2-connected and
loopless, there is a sequence of 2-connected subgraphs $(G_0,\mc B_0), \ldots, (G_n,\mc B_n)$ such that
$(G_n,\mc B_n)=(G,\mc B)$ and $(G_{i+1}, \ab \mc B_{i+1}) \ab = (G_i,\mc B_i)\cup P_i$ for some path $P_i$ that is internally
disjoint from $G_i$. Let $\vp_i$ and $\psi_i$ be the $\Gamma$\-/gain functions induced by $\vp$ and $\psi$ on $G_i$. If
$\vp_0$ and $\psi_0$ are switching inequivalent (resp., switching-and-scaling inequivalent in the case that $\Gamma$ is the additive group of a field), then the result
follows by Proposition \ref{P:InequivalencePreservedInContraction}. Otherwise, there is an integer $t\in\{0,\ldots,n-1\}$ such
that $\vp_i$ and $\psi_i$ are switching (resp., switching-and-scaling) equivalent for $i\leq t$ and
$\vp_{t+1}$ and $\psi_{t+1}$ are switching (resp., switching-and-scaling) inequivalent.
Let $e$ be an edge in path $P_t$. Since $G_{t+1}$ is 2-connected, there is a spanning tree $T_{t+1}$ of $G_{t+1}$ that does not contain $e$.
Normalize $\vp_{t+1}$ and $\psi_{t+1}$ on $T_{t+1}$.
Let $T_t$ be $T_{t+1}$ restricted to $G_t$.
Then $T_t$ is a spanning tree of $G_t$.
Since $\vp_{t+1}$ and $\psi_{t+1}$ are normalized on $T_{t+1}$, $\vp_t$ and $\psi_t$ are normalized on $T_t$.
Since $\vp_t$ and $\psi_t$ are switching (-and-scaling) equivalent while $\vp_{t+1}$ and $\psi_{t+1}$ are switching (-and-scaling) inequivalent, by Proposition \ref{P:NormalizationUnique} $\vp_t=\psi_t$ (resp., $\vp_t = a\psi_t$ for some scalar $a$) while $\vp_{t+1}$ and $\psi_{t+1}$ are equal (resp., equal up to scaling) everywhere except at $e$.
Because $(G_0,\Bb_0)$ is unbalanced, $(G_t,\Bb_t)$ is unbalanced, so neither $\vp_t$ nor $\psi_t$ is trivial (that is, neither assigns the identity element of $\Gamma$ to every edge).
Hence for every cycle $C$ of $G_{t+1}$ containing path $P_t$,
$\vp_{t+1}(C)\neq\psi_{t+1}(C)$
(resp.\ if $a$ is a scalar such that $\vp_t = a\psi_t$ then $\vp_{t+1}(C) \neq a\psi_{t+1}(C)$).
Since $\vp_{t+1}$ and $\psi_{t+1}$ are $\Gamma$\-/realizations of $(G_{t+1},\mc
B_{t+1})$, it must be that every such cycle $C$ is unbalanced. Extend $P_t$ in $G_{t+1}$ to a path $P$ that is
internally disjoint from $G_0$ but whose endpoints are both on $G_0$. Let $\vp'$ and $\psi'$ be $\vp_{t+1}$ and
$\psi_{t+1}$ restricted to the biased graph $(G_0,\mc B_0)\cup P$. Again, $\vp'$ and $\psi'$ are equal (resp., equal up to scaling) on every edge of
$(G_0,\mc B_0)\cup P$ save for the edge $e$
and every cycle $C$ in $(G_0,\mc B_0)\cup P$
containing $e$ is therefore unbalanced. Now in $(G_0,\mc B_0)\cup P$ there is a
link minor $\Omega=((G_0,\mc B_0)\cup P)/K\bs D$ for which $\Omega\bs e$ is in $\Gg_0$ or $\Omega\bs e/f$ is in $\Gg_0$ for some link $f$.
The possibilities for $\Omega$ are as shown in Figure \ref{F:InequivalenceLocalized}.
\begin{figure}[tbp] \begin{center}
\includegraphics[scale=0.9]{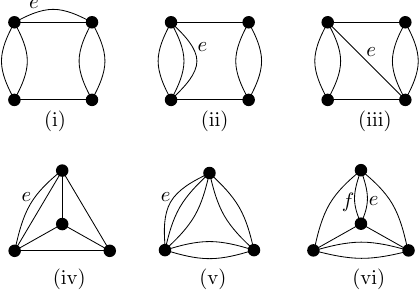}
\end{center} \caption{The possibilities for $\Omega$.} \label{F:InequivalenceLocalized}
\end{figure}

Re-normalize $\vp'$ and $\psi'$ on a spanning tree of $(G_0,\mc B_0)\cup(P-e)$ that contains the contraction set $K$
and let $\vp|_\Omega$ and $\psi_\Omega$ be the induced gain functions on $\Omega$. Again, $\vp|_\Omega$
and $\psi|_\Omega$ are equal (equal up to scaling) on each edge of $(G_0,\mc B_0)\cup(P-e)$ but differ (differ even after scaling) on edge $e$, and every cycle
containing $e$ is unbalanced.
The first outcome of the theorem holds in cases (i), (ii), (iv), and (v) of Figure \ref{F:InequivalenceLocalized} while the second outcome holds in cases (iii) and (vi).
A single frame-type contraction of a joint is necessary in order to obtain $\mathsf U_2$ in case (vi), but no other contraction of a joint is required.
Thus all minors but this one are link minors.
\end{proof}

The following observation will be used in the proof of Theorem \ref{T:ProjectiveIsSwitching}.

\begin{cor} \label{cor:ofInequivalenceLocalized}
If $\GB$ is tangled, then the first outcome of Theorem  \ref{MT:InequivalenceLocalized} holds.
\end{cor}

\begin{proof}
If the first outcome of Theorem \ref{MT:InequivalenceLocalized} does not hold, then the biased graph $\Omega$ in the proof of Theorem \ref{MT:InequivalenceLocalized} is that of either case (iii) or (vi) of Figure \ref{F:InequivalenceLocalized}.
Each of these contain a pair of vertex-disjoint unbalanced cycles, and so cannot occur if $(G,\mc B)$ is tangled.
\end{proof}

\section{Representations of unavoidable minors} \label{sec:unavoidableminors}

In this section we examine the relationship between gain functions on the biased graphs in $\Gg_0$ (along with a few other small biased graphs) and matrix representations of their associated frame and lift matroids.
In Section \ref{S:SwitchingAndProjective} we show that for each biased graph $\Om \in \Gg_0$, a pair of canonical
representations of $F(\Om)$ (resp., $L(\Om)$) are projectively equivalent \iiff their associated gain functions are switching equivalent (resp., switching-and-scaling equivalent).
In Section \ref{sec:Base_case_graphs_all_reps_are_canonical} we show that for each biased graph $\Om \in \Gg_0$ every $\FF$\-/representation of each of $F(\Om)$ and $L(\Om)$ is projectively equivalent to a canonical $\FF$\-/representation particular to $\Om$.

\subsection{Switching and projective equivalence} \label{S:SwitchingAndProjective}

Let $G$ be a graph and let $\FF$ be a field.
Recall that for an $\FF^\times$-gain function $\vp$ and an $\FF^+$-gain function $\psi$, we denote by $A_F(G,\vp)$ and $A_L(G,\psi)$, respectively, the canonical frame and lift matrices defined by $(G,\vp)$ and $(G,\psi)$, resp., as described in Section \ref{subsec:gaingraphscanonicalreps}.
Our starting point is the following result of Zaslavsky.

\begin{prop}[Zaslavsky {\cite{Zaslavsky:BG4}}]
\label{TomProjConRep}
\label{P:switch_equiv_implies_proj_equiv}
Let $G$ be a graph and let $\FF$ be a field.
Let $\fI$ and $\psi$ be $\bb F^\times$- (resp., $\bb F^+$-) gain functions on $G$.
If $\vp$ and $\psi$ are switching equivalent (resp., switching-and-scaling equivalent) then their canonical matrix representations are projectively equivalent.
\end{prop}

\begin{proof}
Suppose $\fI$ and $\psi$ are $\FF^\times$\-/gain functions on $G$ and $\eta$ is a switching function with $\fI^\eta=\psi$.
Let $V(G)=\{v_1$, $v_2$, \ldots, $v_{|V(G)|}\}$.
Let $T$ be
the diagonal matrix with rows and columns indexed by $V(G)$ in which diagonal entry $T_{ii}$ is $\eta(v_i)$, and let
$S$ be the $|E(G)| \times |E(G)|$ diagonal matrix with diagonal entries $S_{jj} = \eta(v_i)\inv$ if vertex $v_i$ is the
tail of edge $e_j$. Then $T A_F(G,\fI) S = A_F(G,\psi)$.

Now suppose $\fI$ and $\psi$ are $\FF^+$\-/gain functions, $\eta$ is a switching function, and that there is a scalar $s \in \FF^\times$ so that
$s\fI^\eta=\psi$. Let $T$ be the $(n+1) \times (n+1)$ matrix whose first row is
$$(s \ s\eta(v_1) \ s\eta(v_2) \ \cdots \ s\eta(v_n) ),$$
first column is
$(s \ 0 \ 0 \ \cdots 0)^T$,
and with the $n \times n$ identity matrix as the submatrix consisting of its remaining rows and columns.
Let $S$ be the diagonal matrix with $s_{11}=1/s$ and all other $s_{ii}=1$.
Then $T A_{L}(G,\fI)S = A_{L}(G,\psi)$.
\end{proof}

The proof of Proposition \ref{P:switch_equiv_implies_proj_equiv} shows that if $\vp$ and $\psi$ are switching equivalent $\FF^\times$-gain functions, then a diagonal matrix $T$ provides witness to the projective equivalence of $A_F(G,\vp)$ and $A_F(G,\psi)$.
The converse is also true.
A similar statement holds for canonical lift matrices.


\begin{lem} \label{P:switch_equiv_iff_diagonal}
Let $G$ be a loopless graph and let $\FF$ be a field.

\textup{(i)} Let $\vp$ and $\psi$ be $\FF^\times$-gain functions on $G$ and assume $A_F(G,\fI)$ and $A_F(G,\psi)$ are projectively equivalent. Then $\vp$ and $\psi$ are switching equivalent \iiff there exists a nonsingular diagonal matrix $T$ and a diagonal column-scaling matrix $S$ such that $TA_F(G,\vp)S=A_F(G,\psi)$.

\textup{(ii)} Let $\vp$ and $\psi$ be $\FF^+$-gain functions on $G$ and assume $A_L(G,\fI)$ and $A_L(G,\psi)$ are projectively equivalent. Then $\vp$ and $\psi$ are switching-and-scaling equivalent \iiff there exists a nonsingular matrix $T$ and a diagonal column-scaling matrix $S$ such that $TA_L(G,\vp)S=A_L(G,\psi)$, where removing the row and column of $T$ indexed by $v_0$ leaves an identity matrix, and all but the first entry of column $v_0$ consists of zeros.
\end{lem}

\begin{proof}
(i)
Put $A = A_F(G,\fI)$ and $B = A_F(G,\psi)$.
If $\vp$ and $\psi$ are switching equivalent, then the proof of Proposition \ref{P:switch_equiv_implies_proj_equiv}
shows that $A$ and $B$ are projectively equivalent via diagonal nonsingular matrices $T$ and $S$. Conversely, suppose that
$B=TAS$ where $T$ and $S$ are both diagonal and nonsingular. Since $T$ is diagonal, row $i$ of $TA$ is obtained by multiplying row $v_i$ of $A$ by $T_{ii}$. Since both $A$ and $B$ are canonical frame representations, both have 1 in
position $v_i$ of column $e_j$ whenever vertex $v_i$ is the tail of edge $e_j$. Hence the diagonal elements $S_{jj}$ of
$S$ satisfy $S_{jj} = T_{ii}\inv$, where $v_i$ is the tail of $e_j$. Thus $\vp$ and $\psi$ are switching equivalent via
$\eta(v_i) = T_{ii}$ for each $v_i \in V(G)$.

(ii)
Put $A = A_L(G,\fI)$ and $B = A_L(G,\psi)$.
If $\vp$ and $\psi$ are switching-and-scaling equivalent, then the proof of Proposition \ref{P:switch_equiv_implies_proj_equiv}
shows that $A$ and $B$ are projectively equivalent via matrices $T$ and $S$ of the required forms.
Conversely, suppose that $B=TAS$ where $T$ and $S$ are of the form in statement (ii).
Put $n=|V(G)|$.
Index the rows and columns of $T$ by $\{v_0,v_1, \ldots, v_n\}$, where as usual row $v_0$ of $A$ contains the gains assigned by $\vp$ and for $i \in \{1, \ldots, n\}$ row $v_i$ corresponds to vertex $v_i$.
Let $\eta : V(G) \to \FF^+$ be the switching function defined by $\eta(v_i) = T_{v_0 v_i}$ for $i \in \{1, \ldots, n\}$.
Then $\br{T_{v_0 v_0}} \vp^\eta=\psi$.
\end{proof}

We now proceed with our examination of each biased graph in $\Gg_0$.

\begin{lem} \label{lem:2C3_equil_implies_switching_equiv}
Let $\fI \: \vec E(2C_3) \to \FF^\times$ be a gain function
with $\Bb_\vp$ containing no 2-cycle, and let $\psi \: \vec E(2C_3) \to \FF^\times$ be another gain function. Then $\fI$
and $\psi$ are switching equivalent \iiff $A_F(2C_3,\fI)$ and $A_F(2C_3, \psi)$ are projectively equivalent.
\end{lem}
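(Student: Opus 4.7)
The plan is to prove the two directions separately. The forward direction follows immediately from Proposition~\ref{P:switch_equiv_implies_proj_equiv}, so the real work is in the converse. For that, Proposition~\ref{P:switch_equiv_iff_diagonal} reduces matters to the following: given any projective equivalence $A_F(2C_3,\psi)=TAS$ with $A=A_F(2C_3,\vp)$, $T$ invertible, and $S$ diagonal, show that $T$ must itself be diagonal.

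I would set up coordinates by orienting each edge of $2C_3$ from its lower- to higher-indexed endpoint and, for each unordered pair $\{u,v\}\subseteq V(2C_3)$, letting $W_{uv}\subseteq\FF^3$ denote the coordinate $2$-plane spanned by the rows indexed by $u$ and $v$. Every column of a frame matrix of $2C_3$ corresponding to an edge between $u$ and $v$ has the form $\widehat u-(\text{gain})\widehat v$, hence lies in $W_{uv}$. Because $S$ is diagonal, the equation $B=TAS$ forces $T$ to send each column of $A$ for an edge with endpoints $u,v$ into $W_{uv}$. The decisive step is to leverage the hypothesis: $\Bb_\vp$ contains no $2$-cycle means precisely that $\vp(e)\neq\vp(e')$ for each pair of parallel edges $e,e'$, which in turn forces the two columns $\widehat u-\vp(e)\widehat v$ and $\widehat u-\vp(e')\widehat v$ to be linearly independent and therefore to span all of $W_{uv}$. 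Consequently $T(W_{uv})\subseteq W_{uv}$, and invertibility upgrades this to $T(W_{uv})=W_{uv}$.

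The three coordinate axes of $\FF^3$ are exactly the pairwise intersections of $W_{12}$, $W_{13}$, and $W_{23}$, so $T$ preserves each coordinate axis and is therefore diagonal. Applying Proposition~\ref{P:switch_equiv_iff_diagonal} then yields switching equivalence of $\vp$ and $\psi$. The main obstacle is really the correct use of the hypothesis: without the no-$2$-cycle condition some parallel pair could have $\vp(e)=\vp(e')$, their two columns would be proportional rather than spanning $W_{uv}$, and the coordinate-plane-preservation argument would collapse. Beyond this, the remaining work is elementary linear algebra in $\FF^3$.
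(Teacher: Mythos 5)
Your forward direction matches the paper's, via Proposition~\ref{P:switch_equiv_implies_proj_equiv}. For the converse you take a genuinely different and cleaner route. The paper first normalizes $\vp$ on a spanning tree, writes out the $3\times 6$ matrix explicitly, and then grinds through specific dot-product equations $t_i\cdot e_j=0$, using $a\neq 1$, $b\neq 1$, $c\neq d$ (the normalized avatars of the no-$2$-cycle hypothesis) to force the off-diagonal entries of $T$ to vanish. Your argument is coordinate-free: every frame-matrix column for an edge with endpoints $u,v$ lies in the coordinate $2$-plane $W_{uv}$; because $S$ is diagonal, $B=TAS$ forces $T(a_j)\in W_{uv}$ for each such column $a_j$ of $A$; the no-$2$-cycle hypothesis is exactly the statement that the two parallel-edge columns for each pair $\{u,v\}$ are independent and hence span $W_{uv}$, so $T(W_{uv})=W_{uv}$ by invertibility; and the coordinate axes, being the pairwise intersections of the $W_{uv}$, are then preserved, which makes $T$ diagonal. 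Proposition~\ref{P:switch_equiv_iff_diagonal} finishes as in the paper. Both routes are correct. Yours buys a transparent view of how the hypothesis enters (proportional columns would break the spanning step) and dispenses with the normalization and the explicit matrix; it would also scale naturally to larger vertex sets. The paper's explicit computation is more pedestrian but arguably easier for a reader to verify line by line without having to think about invariant subspaces.

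Two small points worth making explicit if you write this up. First, you should note (as is standard) that the orientations chosen to form $A$ and $B$ may be taken to agree, since switching an orientation only rescales a column and is absorbed into $S$. Second, your reduction cites Proposition~\ref{P:switch_equiv_iff_diagonal} with the equation written $B=TAS$, which is the form stated there; the paper's proof of the lemma instead writes $TA=BS$, but the two are equivalent after replacing $S$ by $S^{-1}$, so there is no discrepancy.
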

\begin{proof}
If $\fI$ and $\psi$ are switching equivalent, then $A_F(2C_3,\fI)$ and $A_F(2C_3, \psi)$ are projectively
equivalent by Proposition \ref{P:switch_equiv_implies_proj_equiv}. To prove the converse, let $A = A_F(2C_3,\fI)$ and $B = A_F(2C_3,\psi)$ be a pair of projectively equivalent canonical frame matrices.
By normalizing on the spanning tree with
edge set $\{e_1, e_3\}$, by Proposition \ref{P:switch_equiv_implies_proj_equiv} we may assume that $\fI$ assigns gains to $E(2C_3)$ as shown in Figure \ref{fig:2C3}.
\begin{figure}[tbp]
\begin{center}
\includegraphics[scale=0.9]{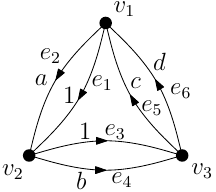}
\end{center}
\caption{Gain function $\fI$ on $2C3$}
\label{fig:2C3}
\end{figure}
Then
\begin{linenomath}
\begin{equation} \label{eqn:A_F(2C_3,phi)}
A = \bordermatrix{
& e_1 & e_2 & e_3 & e_4 & e_5 & e_6 \cr
v_1 & 1 & 1 & 0 & 0 & -c & -d \cr
v_2 & -1 & -a & 1 & 1 & 0 & 0 \cr
v_3 & 0 & 0 & -1 & -b & 1 & 1 \cr }
\end{equation}
\end{linenomath}
Since $\Bb_\fI$ contains no 2-cycle, $a \not=1 \not= b$ and $c \not= d$. Since $A$ and $B$ are projectively equivalent,
there is a nonsingular matrix $T$ and a diagonal matrix $S$ so that $TA=BS$.
Denote by $t_i$ row $i$ of $T$, and by $e_j$ column $j$ of $A$.
Entry $(BS)_{ij}=0$ \iiff entry $(TA)_{ij}=0$; consider the dot products $t_i \cdot e_j = 0$, where
$(i,j) \in \{(3,1),(3,2),(1,3),(1,4),(2,5),(2,6)\}$. The product $t_3 \cdot e_1 = 0$ implies $T_{31}=T_{32}$, and $t_3
\cdot e_2 =0$ implies $T_{31}=aT_{32}$. Together these imply (since $a \not= 1$) that $T_{32}=T_{31}=0$. Similarly,
$t_1 \cdot e_3 = t_1 \cdot e_4 = 0$ imply $T_{13}=T_{12}=0$, and $t_2 \cdot e_5 = t_2 \cdot e_6 =0$ imply
$T_{21}=T_{23}=0$. Hence $T$ is diagonal, and so by Lemma \ref{P:switch_equiv_iff_diagonal}, $\fI$ and $\psi$ are
switching equivalent.
\end{proof}

\begin{lem} \label{lem:2C3_equil_implies_switching_equiv_lift_version}
Let $\fI \: \vec E(2C_3) \to \FF^+$ be a gain function with $\Bb_\vp$ containing no 2-cycle and let $\psi\:\vec
E(2C_3) \to \FF^+$ be another gain function. Then $\fI$ and $\psi$ are switching-and-scaling equivalent \iiff $A_L(2C_3,\fI)$ and $A_L(2C_3, \psi)$ are projectively equivalent.
\end{lem}

\begin{proof}
If $\fI$ and $\psi$ are switching-and-scaling equivalent, then their
associated lift matrices are projectively equivalent by Proposition \ref{P:switch_equiv_implies_proj_equiv}.
Conversely, let $A = A_L(2C_3,\fI)$ and $B = A_L(2C_3,\psi)$ be a pair of projectively equivalent canonical lift matrices.
Normalizing on spanning tree $\{e_1, e_2\}$, and scaling if necessary, we may
assume that $\fI$ assigns gains to $2C_3$ as shown in Figure \ref{fig:2C3_add_gains}.
\begin{figure}[tbp]
\begin{center}
\includegraphics[scale=0.9]{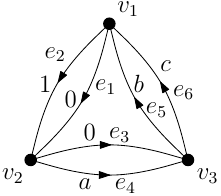}
\end{center}
\caption{Gain function $\fI \: \vec E(2C_3) \to \FF^+$.}
\label{fig:2C3_add_gains}
\end{figure}
Then
\begin{linenomath}
\begin{equation}
\label{eqn:A_L(2C_3,phi)}
A = \bordermatrix{
& e_1 & e_2 & e_3 & e_4 & e_5 & e_6 \cr
 v_0 & 0 & 1 & 0 & a & b & c \cr
 v_1 & 1 & 1 & 0 & 0 & -1 & -1 \cr
 v_2 & -1 & -1 & 1 & 1 & 0 & 0 \cr
 v_3 & 0 & 0 & -1 & -1 & 1 & 1 \cr
}
\end{equation}
\end{linenomath}
Since $\fI$
has no balanced cycles of length 2, $a \not= 0$ and $b \not= c$. There is a non-singular matrix $T$ and a diagonal matrix $S$ so that $TA=BS$. As with
$\fI$, by switching and scaling we may assume $\psi$ assigns gains to $\vec E(2C_3)$ as in Figure \ref{fig:2C3_add_gains},
replacing $a$, $b$, and $c$ with $x$, $y$, and $z$, respectively. Then, denoting elements $S_{ii}$ of $S$ by $s_i$, we
have
\begin{linenomath}
\[ BS =
\begin{pmatrix}
 0 & s_2 & 0 & s_4 x & s_5 y & s_6 z \\
 s_1 & s_2 & 0 & 0 & -s_5 & -s_6 \\
 -s_1 & -s_2 & s_3 & s_4 & 0 & 0 \\
 0 & 0 & -s_3 & -s_4 & s_5 & s_6
\end{pmatrix}
\]
\end{linenomath}
This gives 24 relations among the members
of $T$, one for each dot product $t_i \cdot e_j$, where $t_i$ is the $i$th column of $T$ and $e_j$ is the $j$th column
of $A$.
The eight relations $t_i \cdot e_j=0$ yield $T_{12}=T_{13}=T_{14}$, $T_{21}=T_{31}=T_{41}=0$, $T_{23}=T_{24}$,
and $T_{42}=T_{43}$.
Now after establishing these relations, $t_3\cdot e_5=0$ yields $T_{32}=T_{34}$ and so
\begin{linenomath}
\[
T = \begin{pmatrix}
T_{11} & T_{12} & T_{12} & T_{12} \\
0 & T_{22} & T_{23} & T_{23} \\
0 & T_{32} & T_{33} & T_{32} \\
0 & T_{42} & T_{42} & T_{44} \\
\end{pmatrix}
\]
\end{linenomath}
Now the relations $s_1=t_2\cdot e_1$, $s_2=t_2\cdot e_2$,
$s_3=t_3\cdot e_3$, $s_4=t_3\cdot e_4$, $s_5=t_4\cdot e_5$, $s_6=t_4\cdot e_6$, $-s_2=t_3\cdot e_2$, and $-s_3=t_4\cdot
e_3$ yield $s_1=s_2=s_3=s_4=s_5=s_6$.
Hence the relation $s_2=t_1\cdot e_2$ yields $T_{11}=s_1$. Now the relations
$t_1 \cdot e_4 = s_1 x$, $t_1 \cdot e_5 = s_1 y$, and $t_1 \cdot e_6 = s_1 z$ yield $a=x$, $b=y$ and $c=z$.
This implies that $\vp$ and $\psi$ are switching equivalent after scaling.
\end{proof}

\begin{lem} \label{lem:C_3_frame_lift_not_ech_equiv}
Let $\fI \: \vec E(2C_3) \to \FF^\times$ and $\psi \: \vec E(2C_3) \to \FF^+$ be gain functions, neither
of which yield a balanced 2-cycle. Then $A_F(2C_3,\fI)$ and $A_L(2C_3,\psi)$ are not projectively equivalent.
\end{lem}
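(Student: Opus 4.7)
Plan: By Proposition \ref{P:switch_equiv_implies_proj_equiv}, projective equivalence of the two matrices is unaffected by switching $\vp$ and by switching-and-scaling $\psi$. I first normalize $\vp$ on the spanning tree $\{e_1,e_3\}$ and, after scaling, normalize $\psi$ on the same spanning tree with $\psi(e_2)=1$, so that $A_F \eq A_F(2C_3,\vp)$ has the form \eqref{eqn:A_F(2C_3,phi)} with $a',b' \in \FF^\times \setminus \{1\}$ and $c' \neq d'$ (the three nonvanishing conditions encoding that no 2-cycle of $2C_3$ is balanced in $\Bb_\vp$), while $A_L \eq A_L(2C_3,\psi)$ has the form \eqref{eqn:A_L(2C_3,phi)} with $a \neq 0$ and $b \neq c$. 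Since rows $v_1+v_2+v_3$ of $A_L$ sum to zero, $A_L$ has rank $3$, and the $3 \times 6$ matrix $A_L'$ obtained by deleting row $v_3$ shares the same row space and is therefore projectively equivalent to $A_L$.

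Assume for contradiction that $A_F$ and $A_L'$ are projectively equivalent: then there is a nonsingular $3 \times 3$ matrix $T$ and a nonsingular diagonal matrix $S$ with diagonal entries $s_1,\ldots,s_6$ such that $A_L' = T A_F S$. The plan is to read this equation column by column, extracting a linear system in the entries $T_{ij}$ and the reciprocals $s_j^{-1}$. Column $e_3$ of $A_F$ (which is $(0,1,-1)^T$) maps to $s_3^{-1}(0,0,1)^T$, immediately yielding $T_{12}=T_{13}$ and $T_{22}=T_{23}$. Column $e_1$ gives $T_{11}=T_{12}$ together with $T_{21}-T_{22}=s_1^{-1}$ and $T_{31}-T_{32}=-s_1^{-1}$. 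Column $e_2$, combined with $a' \neq 1$, pins down $T_{32}=-T_{22}$. Column $e_4$, combined with $b' \neq 1$, forces $T_{22}=0$, hence also $T_{32}=0$, and these plus the earlier relations give $T_{21}=s_1^{-1}$, $T_{31}=-s_1^{-1}$, $T_{33}=-s_3^{-1}$.

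At this point $T$ is completely determined up to the scalars $T_{11}, s_1, s_3$. I then examine the two remaining columns $e_5$ and $e_6$ of $A_F$, which share the same zero in row $v_2$ but differ in their $v_1$-entries by precisely $-c'$ versus $-d'$. The row-$3$ equation coming from $e_5$ reads $-c'T_{31}+T_{33}=0$, i.e., $c' = s_1/s_3$; the identical row-$3$ equation for $e_6$ yields $d' = s_1/s_3$. Hence $c' = d'$, contradicting the hypothesis that $\{e_5,e_6\}$ is not a balanced 2-cycle in $\Bb_\vp$. The main obstacle is simply the bookkeeping through this chain of linear relations; the nonvanishing conditions $a',b' \neq 1$ are used exactly as cancellation-killers to rigidify $T$, in analogy with their role in the proofs of Lemmas \ref{lem:2C3_equil_implies_switching_equiv} and \ref{lem:2C3_equil_implies_switching_equiv_lift_version}.
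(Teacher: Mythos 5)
Your proof is correct and follows essentially the same strategy as the paper's: both normalize $\vp$ and $\psi$ to the canonical forms of equations \eqref{eqn:A_F(2C_3,phi)} and \eqref{eqn:A_L(2C_3,phi)}, delete the redundant row of the lift matrix, and then read off constraints on the intertwining matrix $T$ column by column using the 2-cycle nonbalance conditions. The only cosmetic difference is the final contradiction: you deduce $c'=d'$ from columns $e_5,e_6$, whereas the paper deduces from the same two columns (together with $e_1,e_2$) that the third row of $T$ vanishes, so $T$ is singular.
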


\begin{proof}
As in previous cases, without loss of generality we may assume that $\fI$ assigns gains to $2C_3$ as in Figure \ref{fig:2C3}, and
$\psi$ as in Figure \ref{fig:2C3_add_gains}, replacing $a$ with $x$, $b$ with $y$, and $c$ with $z$.
Then $A_F(2C_3,\fI)$ is the matrix of (\ref{eqn:A_F(2C_3,phi)}), and $A_L(2C_3,\psi)$ is the matrix of (\ref{eqn:A_L(2C_3,phi)}) with $a$, $b$, and $c$ replaced by $x$, $y$, and $z$ respectively.
Recall (Section \ref{sec:on_canonical_lift_representations}) that $A_L(2C_3,\psi)$ is projectively equivalent to the matrix $A_L^{-v}(2C_3,\psi)$ obtained by deleting the row indexed by any vertex $v \in V(2C_3)$ from $A_L(2C_3,\psi)$.
Let $B = A_L^{-v_3}(2C_3,\psi)$.
Then $B$ is a full-rank canonical lift matrix projectively equivalent to $A_L(G,\psi)$; in particular, $A$ and $B$ both have three rows.
Now suppose for a contradiction that there exists a non-singular
matrix $T$ and a diagonal matrix $S$ so that $TA = BS$. Writing $S_{ii} = s_i$, and denoting row $i$ of $T$ by $t_i$
and column $j$ of $A$ by $e_j$, we have $t_2 \cdot e_1 = s_1$, $t_2 \cdot e_2 = s_2$, $t_2 \cdot e_3 = 0$, and $t_2
\cdot e_4 = 0$. Together these imply that $T_{22}=T_{23}=0$ and that $T_{21}=s_1=s_2$. Moreover, we have $t_3 \cdot e_1
= -s_1$, $t_3 \cdot e_2 = -s_2$, $t_3 \cdot e_5 = 0$, and $t_3 \cdot e_6 = 0$. Since $s_1=s_2$, $a \not=0,1$, and $c
\not= d$, these imply that $T_{31} = T_{32} = T_{33} = 0$, which implies that $T$ is singular, a contradiction.
\end{proof}

\begin{lem} \label{lem:nobaltriangle}
Let $\fI \: \vec E(K_4) \to \FF^\times$ be a gain function with $\Bb_\fI$ containing no 3-cycle, and let $\psi \: \vec
E(K_4) \to \FF^\times$ be another gain function. Then $\fI$ and $\psi$ are switching equivalent \iiff $A_F(K_4,\fI)$
and $A_F(K_4,\psi)$ are projectively equivalent.
\end{lem}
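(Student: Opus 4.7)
The plan is to follow the template of Lemma~\ref{lem:2C3_equil_implies_switching_equiv}. The forward direction is immediate from Proposition~\ref{P:switch_equiv_implies_proj_equiv}. For the converse, I would normalize $\fI$ on the spanning star $\{e_1, e_2, e_3\}$ centred at $v_1$ with $e_i = v_1 v_{i+1}$, and label the remaining edges $e_4 = v_2 v_3$, $e_5 = v_2 v_4$, $e_6 = v_3 v_4$ with gains $a, b, c$ respectively. Under these conventions the three triangles through $v_1$ are balanced iff $a=1$, $b=1$, or $c=1$, while the outer triangle $v_2 v_3 v_4$ is balanced iff $ac = b$ (after tracking the orientation of $e_5$). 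Thus the hypothesis that $\Bb_\fI$ contains no balanced 3-cycle becomes $a,b,c \neq 1$ and $b \neq ac$.

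Given a projective equivalence $TA = BS$ with $A = A_F(K_4,\fI)$, $B = A_F(K_4,\psi)$, $T$ nonsingular, and $S$ diagonal, I would exploit that the zero pattern of $B$ is determined by the graph alone: entry $(i,j)$ is zero iff $v_i$ is not incident to $e_j$. This supplies twelve dot-product relations $t_i \cdot e_j = 0$. The six from the tree edges $e_1, e_2, e_3$ give equalities among the entries of $T$, while the six from $e_4, e_5, e_6$ introduce the coefficients $a, b, c$. For each of rows $v_2, v_3, v_4$, combining the relevant relations yields $T_{ij}(1-x)=0$ for some $x \in \{a,b,c\}$; for instance, in row $v_3$ the relations $T_{31}=T_{32}$, $T_{31}=T_{34}$, and $T_{32}=bT_{34}$ force $T_{31}(1-b)=0$. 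Hence $a,b,c \neq 1$ annihilates all off-diagonal entries in these three rows.

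The main obstacle will be row $v_1$, the star centre, which is not touched by any tree edge. The three relations $T_{12} = a T_{13}$, $T_{12} = b T_{14}$, $T_{13} = c T_{14}$ (coming from $e_4$, $e_5$, $e_6$) combine to give $T_{14}(ac - b) = 0$. This is precisely where the hypothesis that the outer triangle $v_2 v_3 v_4$ is unbalanced, i.e.\ $b \neq ac$, is used: it forces $T_{14}=0$ and hence $T_{12}=T_{13}=0$. Therefore $T$ is diagonal, and Proposition~\ref{P:switch_equiv_iff_diagonal} supplies the desired switching equivalence of $\fI$ and $\psi$. Note that the argument would break precisely if any one of the four triangles were balanced, so the hypothesis is tight in the sense that each condition $a \neq 1$, $b \neq 1$, $c \neq 1$, $ac \neq b$ is needed to kill a specific row of $T$.
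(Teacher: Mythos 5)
Your proof is correct, but it takes a genuinely different route from the paper's. The paper proves this lemma by a reduction: by switching one can make $\fI$ and $\psi$ the identity on a common star $Y$, view them as gain functions on $\nabla_Y K_4 = 2C_3$, invoke Lemma~\ref{lem:2C3_equil_implies_switching_equiv} on $2C_3$, and then transfer the conclusion back to $K_4$ via Proposition~\ref{prop:gain_functions_DeltaYs} (the $\Delta Y$-correspondence of gain functions) and Proposition~\ref{P:Whittle} (Whittle's correspondence of projective equivalence classes under a $\Delta Y$-exchange). You instead redo the direct matrix computation on $K_4$ in the style of the $2C_3$ proof: normalize on the spanning star, read off the twelve zero-pattern relations $t_i\cdot e_j=0$, and show each off-diagonal entry of $T$ is annihilated by one of the four non-balanced-triangle conditions $a\neq1$, $b\neq1$, $c\neq1$, $ac\neq b$. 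Both are valid. The trade-off is this: your direct approach is self-contained and makes the role of each of the four triangles completely explicit (each kills exactly one row of $T$), at the cost of repeating the linear-algebra template already carried out for $2C_3$; the paper's approach avoids that repetition by piggybacking on the $2C_3$ lemma and the $Y\Delta$ machinery already built in Section~2, which is the same machinery it reuses to derive Lemmas~\ref{lem:add_gains_K4_switch_eq_iff_equal} and~\ref{lem:K4_frame_lift_not_ech_equiv} for free. One minor point of exposition in your argument: in the row-$v_3$ computation what falls out directly is $T_{34}(1-b)=0$; it equals $T_{31}(1-b)=0$ only after using the relation $T_{31}=T_{34}$, so the derivation is fine but the phrasing is slightly compressed.
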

\begin{proof}
By switching we may assume that $\fI$ and $\psi$ are both equal to the identity on a $K_{1,3}$\-/subgraph $Y$. This
allows us to consider $\vp$ and $\psi$ as gain functions on $\nabla_Y K_4=2C_3$. Now $\nabla_Y(K_4,
\Bb_\fI)=(2C_3,\Bb_\vp)$ and $\nabla_Y(K_4, \Bb_\psi)=(2C_3,\Bb_\psi)$. Since $\fI$ has no balanced triangles in $K_4$,
neither has it any balanced 2-cycles in $2C_3$.
Thus by Lemma \ref{lem:2C3_equil_implies_switching_equiv}, $\fI$
and $\psi$ are switching equivalent \iiff $A_F(2C_3,\fI)$ and $A_F(2C_3,\psi)$ are projectively equivalent.
Thus Propositions \ref{P:Whittle} and  \ref{prop:gain_functions_DeltaYs} imply that $\fI$ and $\psi$ are switching equivalent
\iiff $A_F(K_4,\fI)$ and $A_F(K_4,\psi)$ are projectively equivalent. \end{proof}

Using $Y$-$\Delta$ exchanges as in the proof of Lemma \ref{lem:nobaltriangle} yields Lemmas
\ref{lem:add_gains_K4_switch_eq_iff_equal} and \ref{lem:K4_frame_lift_not_ech_equiv}.

\begin{lem} \label{lem:add_gains_K4_switch_eq_iff_equal}
Let $\fI \: \vec E(K_4) \to \FF^+$ be a gain function with $\Bb_\fI$ containing no 3-cycle and let $\psi \: \vec E(K_4)
\to \FF^+$ be another gain function. Then $\fI$ and $\psi$ are switching-and-scalaing equivalent \iiff $A_L(K_4,\fI)$
and $A_L(K_4,\psi)$ are projectively equivalent.
\end{lem}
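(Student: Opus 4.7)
The strategy is to follow the same pattern used in Lemma \ref{lem:nobaltriangle}, replacing every frame-specific ingredient by its lift-specific counterpart. The forward direction is free: if $\varphi$ and $\psi$ are equivalent up to switching and scaling, then $A_L(K_4,\varphi)$ and $A_L(K_4,\psi)$ are projectively equivalent by Proposition \ref{P:switch_equiv_implies_proj_equiv}.

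For the converse, I would pick a $K_{1,3}$-subgraph $Y$ of $K_4$ and use the freedom of switching to assume that both $\varphi$ and $\psi$ take the additive identity on every oriented edge of $Y$. Since $Y$ is then an $F$-normalized portion of the gain function for a spanning tree $F$ containing $Y$'s edges, Proposition \ref{prop:gain_functions_DeltaYs} (applied in the $Y\Delta$-direction, which by the proof of that proposition works identically since the relevant forest arguments are symmetric) lets us view $\varphi$ and $\psi$ as $\FF^+$-realizations of the biased graph $\nabla_Y (K_4, \Bb_\varphi)$, whose underlying graph is $2C_3$. The hypothesis that $\Bb_\varphi$ and $\Bb_\psi$ contain no balanced 3-cycle forces these resulting biased $2C_3$'s to contain no balanced 2-cycle: a balanced 2-cycle of $\nabla_Y(K_4,\Bb)$ would either be the pair $\{Y\}$ (which requires a pre-existing balanced triangle of $K_4$) or arise from a cycle $C \in \Bb$ with $|C \cap Y| = 2$, which is exactly a balanced triangle of $K_4$ containing two edges of $Y$.

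Next, Proposition \ref{P:DeltasAndCanonical}(3) says that performing the corresponding $Y\Delta$-exchange on the lift matrix $A_L(K_4,\varphi)$ yields the lift matrix $A_L(2C_3,\varphi)$, and similarly for $\psi$. Since $Y$ is a triad in the matroid $L(K_4, \Bb_\varphi)$ (equivalently, the corresponding $X$ in $L(\nabla_Y K_4,\nabla_Y \Bb_\varphi)$ is a triangle), Whittle's Proposition \ref{P:Whittle} guarantees that the projective equivalence class of $A_L(K_4,\varphi)$ corresponds bijectively to that of $A_L(2C_3,\varphi)$. So projective equivalence of $A_L(K_4,\varphi)$ and $A_L(K_4,\psi)$ transfers to projective equivalence of $A_L(2C_3,\varphi)$ and $A_L(2C_3,\psi)$. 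Then Lemma \ref{lem:2C3_equil_implies_switching_equiv_lift_version} gives that $\varphi$ and $\psi$ are switching-and-scaling equivalent on $2C_3$, and this equivalence pulls back to $K_4$ because the edges of $Y$ (on which both functions are now the identity) are unaffected by the $Y\Delta$-operation.

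The main point that needs a careful check is the interaction between the $Y\Delta$-exchange, the complete-lift/lift distinction, and the projective-equivalence statement of Proposition \ref{P:Whittle}: one applies Whittle's result to the matroid $L_0(K_4,\Bb_\varphi)$, using Proposition \ref{prop:DeltaYoperation}(2) to identify $L_0(\nabla_Y(K_4,\Bb_\varphi))$ with $\nabla_Y L_0(K_4,\Bb_\varphi)$, and then deletes the distinguished element $e_0$ (which is untouched by the exchange) to obtain the statement for $L$. Everything else is a direct translation of the argument for Lemma \ref{lem:nobaltriangle}.
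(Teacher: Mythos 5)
Your proof is correct and follows the same route the paper intends: the paper's ``proof'' of this lemma is a one-line remark directing the reader to repeat the argument for Lemma~\ref{lem:nobaltriangle} with $Y\Delta$-exchanges, and you have filled in precisely those details, swapping the frame-specific ingredients (Proposition~\ref{P:DeltasAndCanonical}(1), Lemma~\ref{lem:2C3_equil_implies_switching_equiv}) for their lift-specific counterparts (Proposition~\ref{P:DeltasAndCanonical}(3), Lemma~\ref{lem:2C3_equil_implies_switching_equiv_lift_version}). Your explicit handling of the $L_0$ versus $L$ bookkeeping around Proposition~\ref{prop:DeltaYoperation}(2) and the distinguished element $e_0$ goes slightly beyond what the paper says, but it is exactly the check the terse reference elides, and your resolution of it is sound.
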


\begin{lem} \label{lem:K4_frame_lift_not_ech_equiv}
Let $\fI \: \vec E(K_4) \to \FF^\times$ and $\psi \: \vec E(K_4) \to \FF^+$ be gain functions neither of which yield a balanced 3-cycle.
Then $A_F(K_4,\fI)$ and $A_L(K_4,\psi)$ are not projectively equivalent.
\end{lem}

\begin{lem} \label{lem:4tube_equil_implies_switching_equiv}
Let $\fI \: \vec E(2C_4'') \to \FF^\times$ be a gain function with $\mathcal B_\vp$ containing no 2-cycle, and let $\psi \: \vec E(2C_4'') \to \FF^\times$ be another gain function. Then $\fI$ and $\psi$ are switching equivalent \iiff $A_F(2C_4'',\fI)$ and $A_F(2C_4'', \psi)$ are projectively equivalent.
\end{lem}

\begin{proof}
Let $A= A_F(2C_4'',\fI)$ and $B=A_F(2C_4'',\psi)$. If $\fI$ and $\psi$
are switching equivalent, then $A$ and $B$ are projectively equivalent by Proposition
\ref{P:switch_equiv_implies_proj_equiv}. To prove the converse, let $T$ and $S$ be matrices with $TA=BS$, where $T$ is
nonsingular and $S$ is a diagonal matrix scaling the columns of $B$. We may assume without loss of generality that the
edge orientations chosen to define $B$ are the same as those chosen to define $A$; by normalizing on the spanning tree
with edges $e_3, e_4, e_5$, we may assume without loss of generality that $\fI$ assigns gains to $\vec E(K_4)$ as shown in Figure
\ref{fig:4-tube} \begin{figure}[tbp]
\begin{center} \includegraphics[scale=0.9]{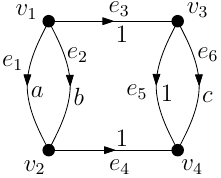} \end{center} \caption{Labelled $2C_4''$ with a normalized gain function.}
\label{fig:4-tube} \end{figure} where $a \not= b$ and $c \not= 1$. Thus
\begin{linenomath}
\[ A = \bordermatrix{ &e_1& e_2 & e_3 & e_4 &
e_5 & e_6\cr v_1 & 1 & 1 & 1 & 0 & 0 & 0 \cr v_2 & -a& -b & 0 & 1 & 0 & 0 \cr v_3 & 0 & 0 & -1 & 0 & 1 & 1 \cr v_4 & 0
& 0 & 0 & -1 & -1 & -c \cr }.\]
\end{linenomath}
Now, each entry $B_{ij}$ of $B$ is zero \iiff entry $(TA)_{ij}=0$. Now for a fixed row
$i$, there are three distinct $j$ such that $t_i \cdot e_j = 0$.
It is a straightforward check that for each $i$, that these three
relations yield $T_{ij}\neq0$ if and only if $i=j$.
For $i=1$, the relations are $0=t_1\cdot e_4$, which implies $T_{12}=T_{14}$; $0=t_1\cdot e_5$, which implies
$T_{13}=T_{14}$; and $0=t_1\cdot e_5$, which implies $T_{13}=cT_{14}$.
Since $c\neq 1$, this implies $T_{12}=T_{13}=T_{14}=0$.
Similarly, the entries of $T$ off its main diagonal in rows 2, 3, and 4 are all zero.
Thus $T$ is diagonal.
By Lemma \ref{P:switch_equiv_iff_diagonal}, $\fI$ and $\psi$ are switching equivalent.
\end{proof}

\begin{lem} \label{lem:4tube_equil_implies_switching_equiv_LIFT}
Let $\fI \: \vec E(2C_4'') \to \FF^+$ be a gain function with $\mathcal B_\fI$ containing no 2-cycle, and let $\psi \: \vec E(2C_4'') \to \FF^+$ be another gain function.
Then $\fI$ and $\psi$ are switching-and-scaling equivalent \iiff $A_L(2C_4'',\fI)$ and $A_L(2C_4'', \psi)$ are projectively equivalent.
\end{lem}

\begin{proof}
The ``only if'' statement again follows from Proposition \ref{P:switch_equiv_implies_proj_equiv}. For the converse, \wolog{}
assume that $\fI(e_1)=\psi(e_1)=1$, $\fI(e_2)=a$, $\psi(e_2)=x$, $\fI(e_3)=\psi(e_3)=0$, $\fI(e_4)=\psi(e_4)=0$,
$\fI(e_5)=\psi(e_4)=0$, $\fI(e_6)=b$, and $\psi(e_6)=y$ (where $2C_4''$ has edges and orientations as in Figure
\ref{fig:4-tube}) such that neither $a$ nor $x$ is 1 and neither $b$ nor $y$ is 0. Let $A=A_L(2C_4'',\fI)$ and
$B=A_L(2C_4'',\psi)$, and let $T$ and $S$ be matrices with $TA=BS$, where $S$ is diagonal (with $s_i=S_{ii}$) scaling
the columns of $B$. Denoting row $i$ of $T$ by $t_i$ and column $j$ of $A$ by $e_j$ we have $t_i \cdot e_j = 0$ for 15
pairs $(i,j)$, three pairs for each row.
It is straightforward to deduce that these relations imply
\begin{linenomath}
\[ T = \begin{pmatrix} T_{11} & T_{12} & T_{12} & T_{12} & T_{12}\\ 0 & T_{22} &T_{23} &
T_{23} & T_{23}\\ 0 & T_{32} & T_{33} & T_{32} & T_{32} \\ 0 & T_{42} & T_{42} & T_{44} & T_{42} \\ 0 & T_{52} & T_{52}
& T_{52} & T_{55} \end{pmatrix}.\]
\end{linenomath}
Now each column $e_j$ has $i,k\geq2$ such that $t_i\cdot e_j=s_j$ and
$t_k\cdot e_j=-s_j$. These 12 relations yield $s_1=s_2=s_3=s_4=s_5=s_6$. The relation $t_1\cdot e_1=s_1$ yields
$T_{11}=s_1$. Now the relation $t_1\cdot e_2=s_1x$ yields $a=x$ and the relation $t_1\cdot e_6=s_1y$ yields $b=y$.
Thus $A$ and $B$ are switching-and-scaling equivalent.
\end{proof}

Two biased graphic representations of $U_{2,4}$ are $\mathsf U_2$ and $\mathsf{U_3}$, shown in Figure \ref{fig:U24unlabelled}, where all cycles in each are unbalanced.
Denote the underlying graphs of $\mathsf{U_2}$ and $\mathsf{U_3}$ by $U_2$ and $U_3$, respectively.

\begin{figure}[tbp]
\begin{center}
\includegraphics[scale=0.9]{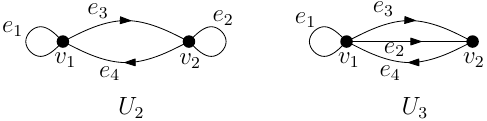}
\end{center}
\caption{Labels and edge orientations for $U_2$ and $U_3$.}
\label{fig:U24}
\end{figure}

\begin{lem} \label{lem:U2Inequivalence}
Let $\fI$ and $\psi$ be $\FF^\times$\-/realizations of $\mathsf{U_2}$. Then $A_F(U_2, \fI)$ and $A_F(U_2,\psi)$ are
projectively equivalent \iiff  $\fI(e_3 e_4) = \psi(e_3 e_4)$.
\end{lem}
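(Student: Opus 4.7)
The plan is a direct matrix computation, with the two joints doing the work of forcing the change-of-basis matrix to be diagonal. Label $\mathsf U_2$ so that $v_1,v_2$ are its vertices, $e_1,e_2$ are unbalanced loops at $v_1,v_2$ respectively, and $e_3,e_4$ are the two links between them, oriented so that $e_3 e_4$ is a closed walk at $v_1$ (so that $\vp(e_3 e_4) = \vp(e_3)\vp(e_4)$ makes sense). With these conventions,
\[
A_F(U_2,\vp) = \begin{pmatrix} 1 & 0 & 1 & -\vp(e_4) \\ 0 & 1 & -\vp(e_3) & 1 \end{pmatrix},
\]
and analogously for $\psi$.

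For the forward direction, suppose $T\,A_F(U_2,\vp) = A_F(U_2,\psi)\,S$ with $T$ nonsingular and $S = \mathrm{diag}(s_1,s_2,s_3,s_4)$. The columns corresponding to the joints $e_1,e_2$ are exactly $\wh{v_1}$ and $\wh{v_2}$ in both matrices, so matching them forces $T\wh{v_i}$ to be a scalar multiple of $\wh{v_i}$; hence $T = \mathrm{diag}(a,d)$ with $a = s_1$ and $d = s_2$. Reading off the link columns then yields $a = s_3$, $d\vp(e_3) = s_3\psi(e_3)$, $d = s_4$, and $a\vp(e_4) = s_4\psi(e_4)$, from which $\psi(e_3) = (d/a)\vp(e_3)$ and $\psi(e_4) = (a/d)\vp(e_4)$. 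Multiplying, $\psi(e_3)\psi(e_4) = \vp(e_3)\vp(e_4)$, i.e., $\psi(e_3 e_4) = \vp(e_3 e_4)$.

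For the converse, suppose $\vp(e_3 e_4) = \psi(e_3 e_4)$. Define a switching function $\eta\colon V(U_2) \to \FF^\times$ by $\eta(v_1) = 1$ and $\eta(v_2) = \psi(e_3)/\vp(e_3)$. Then $\vp^\eta(e_3) = \psi(e_3)$ directly from the definition of $\eta$, and the hypothesis $\vp(e_3)\vp(e_4) = \psi(e_3)\psi(e_4)$ gives $\vp^\eta(e_4) = \psi(e_4)$. Since the joint columns of a frame matrix do not depend on the gain values assigned to the joints, $\vp^\eta$ and $\psi$ produce identical frame matrices; combined with Proposition~\ref{P:switch_equiv_implies_proj_equiv} applied to the switching-equivalent pair $\vp,\vp^\eta$, this exhibits the required projective equivalence. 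The only real subtlety throughout is orientation bookkeeping: one must orient $e_3,e_4$ so that $e_3 e_4$ is actually a walk, otherwise the expression $\vp(e_3 e_4)$ in the lemma does not refer to the 2-cycle gain at all.
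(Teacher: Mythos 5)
Your proof is correct and takes essentially the same approach as the paper: both hinge on the joint columns forcing the row-transformation matrix $T$ to be diagonal, which reduces projective equivalence to equality of the 2-cycle gain. The paper packages this by normalizing $\vp$ and $\psi$ (via switching and column scaling) into a single standard form $\bigl[\begin{smallmatrix}1&0&1&-g\\0&1&-1&1\end{smallmatrix}\bigr]$ and appealing to the fact that $g$ is then a complete projective invariant, whereas you carry out the forward computation explicitly on unnormalized gain functions and handle the converse by exhibiting a switching function $\eta$ with $\vp^\eta$ agreeing with $\psi$ on the links (together with the observation that joint columns are insensitive to gain); these are interchangeable ways of organizing the same argument.
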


\begin{proof}
We may assume ${U_2}$ is labelled with edge orientations as in Figure \ref{fig:U24}.
Matrices $A_F(U_2, \fI)$ and $A_F(U_2, \psi)$ are of the form
\begin{equation} \label{eqn:U24frame} \begin{pmatrix}
 1 & 0 & 1 & -g \\
  0 & 1 & -1 & 1 \end{pmatrix}
\end{equation}
up to scaling columns $e_1$ and $e_2$.
These are in standard form
relative to the basis $\{e_1,e_2\}$ and so are projectively equivalent if and only if entry $g$ is the same for both $A_F(U_2,
\fI)$ and $A_F(U_2, \psi)$. The result follows.
\end{proof}

\begin{lem} \label{lem:T3InequivalenceLift}
Let $\fI$ and $\psi$ be $\FF^+$\-/realizations of $\mathsf{U_3}$. Then $A_L(U_3,\fI)$ and $A_L(U_3,\psi)$ are
projectively equivalent \iiff $\fI|_{\{e_2,e_3,e_4\}}$ and $\psi|_{\{e_2,e_3,e_4\}}$ are switching-and-scaling
equivalent.
\end{lem}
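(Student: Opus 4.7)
The plan is to reduce the claim to a direct linear-algebra comparison of two explicit $2\times 4$ matrices. From Figure \ref{fig:U24}, $\mathsf{U_3}$ consists of two vertices $u,v$ with three parallel $uv$-links $e_2,e_3,e_4$ together with an unbalanced loop $e_1$. Since the column of $A_L(U_3,\cdot)$ corresponding to a joint is $\wh{v_0}$ by definition of the lift matrix, the gain on $e_1$ does not affect the matrix at all. After eliminating the redundant row for $v$ (which is $-1$ times the row for $u$), both lift matrices take the reduced form
\[
\bbordermatrix{ & e_1 & e_2 & e_3 & e_4 \cr
v_0 & 1 & \fI(e_2) & \fI(e_3) & \fI(e_4) \cr
u & 0 & 1 & 1 & 1 \cr },
\]
with $\psi(e_j)$ in place of $\fI(e_j)$ in the matrix for $\psi$.

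For the forward direction, suppose $c\,\fI^\eta|_{\{e_2,e_3,e_4\}}=\psi|_{\{e_2,e_3,e_4\}}$ for some scalar $c\in\FF^\times$ and switching function $\eta\colon\{u,v\}\to\FF^+$. I will define an auxiliary gain function $\fI'$ on all of $\mathsf{U_3}$ by setting $\fI'(e_1):=\psi(e_1)/c$ and $\fI'(e_j):=\fI(e_j)$ for $j\in\{2,3,4\}$. Because the joint column does not depend on its gain, $A_L(U_3,\fI')=A_L(U_3,\fI)$; meanwhile $c\,(\fI')^\eta=\psi$ holds on all of $\mathsf{U_3}$ (switching leaves the loop gain fixed), so Proposition \ref{P:switch_equiv_implies_proj_equiv} yields projective equivalence of $A_L(U_3,\fI')$ and $A_L(U_3,\psi)$, and hence of $A_L(U_3,\fI)$ and $A_L(U_3,\psi)$.

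For the converse, I assume $TA=BS$ for a nonsingular $2\times 2$ matrix $T$ and a nonsingular diagonal matrix $S=\mathrm{diag}(s_1,s_2,s_3,s_4)$. The three zero entries in the bottom row of $BS$ opposite columns $e_2,e_3,e_4$ of $TA$ force $T_{21}=0$ and $T_{22}=s_2=s_3=s_4$; comparing the first column then gives $T_{11}=s_1$. The three top-row equations from columns $e_2,e_3,e_4$ reduce to $\psi(e_j)=c\,\fI(e_j)+d$ for $j\in\{2,3,4\}$, where $c:=s_1/T_{22}\in\FF^\times$ and $d:=T_{12}/T_{22}\in\FF^+$. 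This is exactly the effect of first switching $\fI|_{\{e_2,e_3,e_4\}}$ via $\eta(u)=0,\ \eta(v)=d/c$ (adding $d/c$ to each $uv$-link gain) and then scaling by $c$, giving the desired switching-and-scaling equivalence. There is no serious obstacle here; the key conceptual point is simply that the joint column $\wh{v_0}$ pins down a distinguished direction in the coordinate system, cutting the projective symmetry group of the matrix down to the affine (switching-and-scaling) transformations on the parallel-link gains and ruling out the more general Möbius-type behaviour one would otherwise see with four parallel links alone.
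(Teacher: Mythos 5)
Your proof is correct and follows essentially the same route as the paper's: write down the lift matrix explicitly, assume $TA=BS$ with $T$ invertible and $S$ diagonal, and read off the constraints forced by the zero pattern and the joint column. The main cosmetic differences are that you work with the row-reduced $2\times 4$ matrix and leave the gains $\fI(e_j)$ general, arriving at the affine relation $\psi(e_j)=c\,\fI(e_j)+d$ and then identifying this as switching-and-scaling, whereas the paper keeps the $3\times 4$ matrix and first normalizes $\fI$ and $\psi$ on $\{e_2,e_3\}$ so that the conclusion reduces to $a=x$; these are the same computation packaged differently. Your handling of the forward direction (extending the partial equivalence to a full gain function on $\mathsf U_3$ using the fact that the joint column is independent of its gain, then invoking Proposition~\ref{P:switch_equiv_implies_proj_equiv}) is also sound, and the closing remark about the joint column $\wh{v_0}$ cutting the symmetry down to affine transformations is a nice way of seeing why the statement is true.
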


\begin{proof}
We may assume that $\mathsf{U}_3$ is labelled as in Figure \ref{fig:U24}.
If $A=A_L(U_3,\fI)$ and $B=A_L(U_3,\psi)$ are projectively equivalent, then there is an invertible matrix $T$ and
diagonal matrix $S$ such that $TA=BS$.
By switching and scaling we may assume that $\fI(e_1)=\psi(e_1)=1$,
$\fI(e_2)=\psi(e_2)=0$, $\fI(e_3)=\psi(e_3)=1$, $\fI(e_4)=a$, and $\psi(e_4)=x$.
Again writing $s_i$ for entry $S_{ii}$,
\begin{linenomath}
\[
\begin{pmatrix}
T_{11} & T_{12} & T_{13}\\
T_{21} & T_{22} & T_{23}\\
T_{31} & T_{32} & T_{33}\\
\end{pmatrix}
\begin{pmatrix}
1 & 0 & 1 & a\\
0 & 1 & 1 & -1 \\
0 &-1 & -1 & 1
\end{pmatrix}=\begin{pmatrix}
s_1 & 0 & s_3 & s_4x\\
0 & s_2 & s_3 & -s_4 \\
0 &-s_2 & -s_3 & s_4
\end{pmatrix}.\]
\end{linenomath}
This yields $T_{11}=s_1$, $T_{21}=T_{31}=0$, and
$T_{12}=T_{13}$.
That is,
\begin{linenomath}
\[\begin{pmatrix}
s_1 & T_{12} & T_{12}\\
0 & T_{22} & T_{23}\\
0 & T_{32} & T_{33}\\
\end{pmatrix}\begin{pmatrix}
1 & 0 & 1 & a\\
0 & 1 & 1 & -1 \\
0 &-1 & -1 & 1
\end{pmatrix}=\begin{pmatrix}
s_1 & 0 & s_3 & s_4x\\
0 & s_2 & s_3 & -s_4 \\
0 &-s_2 & -s_3 & s_4
\end{pmatrix},\]
\end{linenomath}
which yields $s_1=s_2=s_3=s_4$.
Thus $a=x$, and so $\vp$ and $\psi$ are switching-and-scaling equivalent.
\end{proof}

\subsection{All $\FF$-representations are canonical} \label{sec:Base_case_graphs_all_reps_are_canonical}

Let $\FF$ be a field.
In this section we show that every $\bb F$\-/matrix representation of a frame or lift matroid arising from a biased graph in $\Tt_0$ is projectively equivalent to a canonical representation particular to that biased graph.
Recall that when $\GB$ is a biased graph with no two vertex-disjoint unbalanced cycles, $F\GB = L\GB$, and we denote this common matroid by $M\GB$.

\begin{lem} \label{thm:All_reps_2C3s_are_canonical}
Let $(2C_3,\Bb)$ be a biased graph with no balanced 2-cycle and let $A$ be an $\FF$-matrix representing $M(2C_3,\Bb)$.
Then $A$ is projectively equivalent to a canonical lift matrix particular to $(2C_3,\mc B)$ or to a canonical frame matrix particular to $(2C_3,\mc B)$, but not both.
\end{lem}

\begin{proof}
We may assume that $2C_3$ is labelled and has edge orientations as shown in Figure \ref{fig:2C3_add_gains_copy}.
\begin{figure}[tbp]
\begin{center}
\includegraphics[scale=0.9]{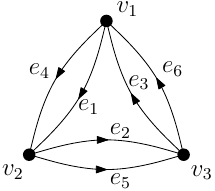}
\caption{Labels and edge orientations for $2C_3$.}
\label{fig:2C3_add_gains_copy}
\end{center}
\end{figure}
Let $A$ be a matrix over $\FF$ representing $M(2C_3,\Bb)$.
If $(2C_3,\Bb) \cong \mathsf T_4$ then $M(2C_3,\Bb)$ is
isomorphic to the cycle matroid of $K_4$, and so
has a projectively unique representation over every field.
Thus if the characteristic of $\bb F$ is two then $A$ is projectively equivalent to the canonical
lift matrix
\begin{linenomath}
\[B =
\left(
    \begin{array}{cccccc}
      1 & 1 & 1 & 0 & 0 & 0 \\
      1 & 0 & 1 & 1 & 0 & 1 \\
      1 & 1 & 0 & 1 & 1 & 0 \\
      0 & 1 & 1 & 0 & 1 & 1 \\
    \end{array}
  \right) \] \end{linenomath}
and if the characteristic of $\bb F$ is not two then $A$ is projectively equivalent to the canonical frame matrix
\begin{linenomath}
\[ C = \left(
    \begin{array}{cccccc}
      1 & 0 & 1 & 1 & 0 & -1 \\
      1 & 1 & 0 & -1 & 1 & 0 \\
      0 & 1 & 1 & 0 & -1 & 1 \\
    \end{array}
  \right).\]
\end{linenomath}

We now claim that there is no canonical frame matrix particular to $\mathsf T_4$ over any field of characteristic two, and that neither is there a canonical lift matrix particular to $\mathsf T_4$ in any field of characteristic different from two.
For, toward a contradiction, suppose $D$ is a canonical frame matrix particular to $\mathsf T_4$ over a field $\FF$ of characteristic two.
Assume the collection of balanced cycles of $\mathsf T_4$ is $\{e_1e_2e_6, e_1e_3e_5, e_2e_3e_4, e_4e_5e_6\}$.
We may assume
\begin{linenomath}
\[ D =
\bordermatrix{
                          & e_1 & e_2 & e_3 & e_4 & e_5 & e_6  \cr
                         &   1 &  0  &  c  &  1   &  0  &  1  \cr
                          &  a &  1  &  0  &  1   &  1 &  0 \cr
                           & 0 &  b &   1  &  0   &  1  &   1
} \]
\end{linenomath}
where $a, b, c \in \FF^\times$ (and we omit the customary negative signs as redundent).
Since $e_1e_2e_6$ is balanced, $ab=1$;
since $e_1e_3e_5$ is balanced, $ac=1$; and
because $e_2e_3e_4$ is balanced, $bc=1$.
These relations imply that $a=b=c$ and so that $a^2=1$.
But this implies $a=1$, and so $D$ does not represent $M(\mathsf T_4)$, a contradiction.

Similarly, suppose for a contradiction that $D$ is a canonical lift matrix particular to $\mathsf T_4$ over a field $\FF$ of characteristic different from two.
Then we may assume
\begin{linenomath}
\[ D =
\bordermatrix{
& e_1 & e_2 & e_3 & e_4 & e_5 & e_6  \cr
& 0 & 0 & 1 & a & b & c \cr
& 1 & 0 & -1 & 1 & 0 & -1 \cr
& -1 & 1 & 0 & -1 & 1 & 0
}
\]
\end{linenomath}
for some nonzero elements $a, b, c \in \FF$, where the second and third rows are the oriented incidence matrix of $2C_3$ with its row corresponding to $v_3$ removed.
Because $e_4e_5e_6$ is balanced, $a+b+c=0$.
Since $e_1e_2e_6$ is balanced, $c=0$;
since $e_1e_3e_5$ is balanced, $1+b=0$; and
because $e_2e_3e_4$ is balanced, $1+a=0$.
These relations imply that $a=b=-1$ and so that $a+b+c=-2 \neq 0$, a contradiction.
This completes the proof in the case that $(2C_3,\Bb) \cong \mathsf{T}_4$.

Now assume $(2C_3,\Bb) \not\cong \mathsf T_4$.
By Proposition \ref{P:Biased2C3s} we may assume that the triangles $\{e_1, e_2, e_3\}$ and $\{e_2, e_3, e_4\}$
are both unbalanced.
Since the only form a 3-circuit takes in $(2C_3,\mc B)$ is a balanced triangle and neither $e_5$ nor $e_6$ forms a triangle with $\{e_2, e_3\}$, $A$ is projectively equivalent to the matrix
\begin{linenomath}
\[ \bordermatrix{
                  & e_1 & e_2 & e_3 & e_4 & e_5 & e_6  \cr
                  & 1 &  0  &  0  &  1   &  1  &  1  \cr
                  & 0 &  1  &  0  &  1   &  a  &  c \cr
                  & 0 &  0 &   1  &  1   &  b  &   d \cr
} .\]
\end{linenomath}
Hence:
\begin{enumerate}[label=\textup{(\roman*)}]
\item
Neither $b$ nor $c$ is 0.
If $b=0$ then $\{e_1, e_2, e_5\}$ is a circuit; if $c=0$ then $\{e_1, e_3, e_6\}$ is a circuit: both contradictions.
\item  $a \not= b$:
If so then $a \not= 1$, as then $e_4$ and $e_5$ would form a parallel pair, a contradiction.
But then $\{e_1, e_4, e_5\}$ is a circuit, also a contradiction.
\item  $b \not= 1$:
If so, then $\{e_2, e_4, e_5\}$ is a circuit, a contradiction.
\item  $c\not=d$:
If so, then $c \not= 1$ as $e_4$ and $e_6$ are not a parallel pair.
But then $\{e_1, e_4, e_6\}$ is a circuit, a contradiction.
\item  $c \not= 1$:
If so, $\{e_3, e_4, e_6\}$ is a circuit, a contradiction.
\item  $a \not= c$:
If so, then $d \not=b$ since $e_5$ and $e_6$ are not a parallel pair.
But then $\{e_3, e_5, e_6\}$ is a circuit, a contradiction.
\item  $b \not= d$:
If so, then $a \not= c$ since $e_5$ and $e_6$ are not a parallel pair.
But then $\{e_2, e_5, e_6\}$ is a circuit, a contradiction.
\end{enumerate}

Suppose there are nonsingular matrices $T$ and $S$ such that $TAS$ is a canonical frame matrix particular to $(2C_3, \Bb)$, where $S$ is diagonal column-scaling matrix.
Then we may assume
\begin{linenomath}
\[ TAS =
\bordermatrix{
                          & e_1 & e_2 & e_3 & e_4 & e_5 & e_6  \cr
                         &   1 &  0  &  -g_1  &  1   &  0  &  -g_4  \cr
                          &  -1 &  1  &  0  &  -g_2   &  1 &  0 \cr
                           & 0 &  -1 &   1  &  0   &  -g_3  &   1 \cr
} \]
\end{linenomath}
for some elements $g_1, \ldots, g_4 \in \FF - \{0,1\}$.
Let us denote row $i$ of $T$ by $t_i$ and column $j$ of $A$ by $e_j$.
Consider the products $t_i \cdot e_j = (TA)_{ij}$ for $1 \leq i \leq 3$, $1 \leq j \leq 6$.
The products $t_3 \cdot e_1 = 0$, $t_1 \cdot e_2 = 0$, and $t_2 \cdot e_3 = 0$ imply $T_{31} = 0$, $T_{12}=0$, and $T_{23} = 0$, respectively.
Since $t_1 \cdot e_1 = -(t_2 \cdot e_1)$, $T_{21}=-T_{11}$.
Similarly, $t_2 \cdot e_2 = -(t_3 \cdot e_2)$ implies $T_{32}=-T_{22}$.
Now $t_3 \cdot e_4 = 0$ implies $T_{33} = -T_{32}$;
$t_1 \cdot e_5 = 0$ implies $T_{13} = -T_{11}/b$;
and finally, $t_2 \cdot e_6 = 0$ implies that $T_{22}=-T_{21}/c$.
Thus $T$ is the matrix
\begin{linenomath}
\[ \begin{pmatrix}
	t & 0 & -t/b \\
	-t & t/c & 0  \\
	0 & -t/c & t/c
\end{pmatrix}
= t
\begin{pmatrix}
	1 & 0 & -1/b \\
	-1 & 1/c & 0  \\
	0 & -1/c & 1/c
\end{pmatrix} \]
\end{linenomath}
for some nonzero $t \in \FF$.
Since $T$ has determinant
$t^3(1/c^2 - 1/bc)$, $T$ is non-singular if and only if $b \neq c$.
Assuming $b \neq c$, and taking $t=1$,
\begin{linenomath}
\[TA=\left(
\begin{array}{cccccc}
1 & 0 & -{1}/{b} & {(b-1)}/{b} & 0 & {(b-d)}/{b} \\
-1 & {1}/{c} & 0 & {(1-c)}/{c} & {(a-c)}/{c} & 0 \\
0 & -{1}/{c} & {1}/{c} & 0 & {(b-a)}/{c} & {(d-c)}/{c}
\end{array}
\right).\]\end{linenomath}
By claims 1-7 above $TA$ has exactly two nonzero entries in each column, so scaling the columns of $TA$ appropriately yields a canonical frame matrix.
Thus $A$ is projectively equivalent to a canonical frame matrix particular to $(2C_3, \Bb)$ if and only if $b \neq c$.

Now let $T$ be a nonsingular matrix such that $TAS$ is a canonical lift matrix particular to $(2C_3,\Bb)$, for some diagonal column-scaling matrix $S$.
We may assume that $TAS$ is of the form
\begin{linenomath}
\[ TAS =
\begin{pmatrix}
0 & 0 & 1 & g_1 & g_2 & g_3 \\
1 & 0 & -1 & 1 & 0 & -1 \\
-1 & 1 & 0 & -1 & 1 & 0
\end{pmatrix}
\]
\end{linenomath}
for some nonzero $g_1, g_2, g_3 \in \FF^+$,
where row 1 is indexed by $v_0$ and rows 2 and 3 are the oriented incidence matrix of $2C_3$ with its row corresponding to $v_3$ removed.
Consider the products $t_i \cdot e_j = (TA)_{ij}$.
The products $t_1 \cdot e_1 = 0$, $t_1 \cdot e_2=0$, $t_2 \cdot e_2=0$, and $t_3 \cdot e_3 = 0$ imply $T_{11}=0$, $T_{12}=0$, $T_{22}=0$, and $T_{33}=0$, respectively.
Thus
$t_2 \cdot e_1 = -(t_3 \cdot e_1)$ implies $T_{21}=-T_{31}$ and
$t_1 \cdot e_3 = -(t_2 \cdot e_3)$ implies $T_{13}=-T_{23}$;
$t_2 \cdot e_5 = 0$ implies $T_{21}=-bT_{23}$ and
$t_3 \cdot e_6 = 0$ implies $T_{31}=-cT_{32}$.
Now $t_2 \cdot e_4 = -(t_3 \cdot e_4)$ yields $T_{23}=-T_{32}$, which the preceding relations imply is equivalent to the statement $T_{31}/b=T_{31}/c$.
This holds if and only if either $T_{31}=0$ or $b=c$.
Hence if $b \neq c$, $T_{31}=0$.
Then the preceding relations imply that $T_{32}=0$.
Since $T_{33}=0$, this implies $T$ is singular, a contradiction.
Thus in the case that $b \neq c$, $A$ is not projectively equivalent to a canonical lift matrix particular to $(2C_3,\Bb)$.

So assume $T_{31}$ is nonzero and $b=c$.
Then the relations above imply $T$ is the matrix
\begin{linenomath}
\[ \begin{pmatrix}
	0 & 0 & t \\
	bt & 0 & -t  \\
	-bt & t & 0
\end{pmatrix}
= t
\begin{pmatrix}
	0 & 0 & 1 \\
	b & 0 & -1  \\
	-b & 1 & 0
\end{pmatrix}
\]
\end{linenomath}
for some nonzero $t \in \FF$.
Matrix $T$ is non-singular; taking $t=1$ we have
\begin{linenomath}
\[ TA =  \begin{pmatrix}
    0  & 0 & 1  & 1      & b & d & \\
    -b & 0 & -1 & b-1  & 0 & b-d \\
     b  & 1 & 0 & 1-b  & a-b & 0
\end{pmatrix} . \]
\end{linenomath}
By claims 1, 2, 3, and 7, none of $b$, $b-1$, $a-b$, nor $b-d$ is zero.
By scaling columns so that all nonzero entries in rows 2 and 3 are $\pm 1$ (and appending a fourth row obtained by negating the sum of rows 2 and 3 if desired), we obtain a canonical lift matrix particular to $(2C_3, \Bb)$.
Thus $A$ is projectively equivalent to a canonical lift matrix particular to $(2C_3, \Bb)$ if and only if $b = c$.
\end{proof}

Recall that $\Pr$ is the triangular prism with just its two triangles balanced, and that $\Pr_1$ and $\Pr_2$ are obtained from $\Pr$ by contracting 2 and 1 of the edges of the matching between the two triangles, respectively (Figure \ref{fig:T2PrimeSplit}).

\begin{lem} \label{L:SplitsOf2C3}
Let $\Omega \in \{\Pr, \Pr_1, \Pr_2\}$ and let $A$ be an $\FF$\-/matrix representing $M(\Omega)$.
Then $A$ is projectively equivalent to a canonical lift matrix particular to $\Omega$ or to a canonical frame matrix particular to $\Omega$, but not both.
\end{lem}

\begin{proof}
First, consider $\Pr_1$.
Let $A$ be an $\FF$\-/representation of $M(\Pr_1)$.
Let $Y$ be a $K_{1,3}$\-/subgraph of $\Pr_1$.
Then $\nabla_Y\Pr_1$ is obtained from $\mathsf T_2$ by the addition of an edge $e$ that creates a balanced 2-cycle.
Hence $e$ is parallel with an element of $M(\nabla_Y\Pr_1)$.
By Proposition \ref{prop:DeltaYoperation} $M(\nabla_Y\Pr_1) = \nabla_Y M(\Pr_1)$.
Thus by Lemma \ref{thm:All_reps_2C3s_are_canonical} every $\FF$\-/representation of
$\nabla_Y M(\Pr_1)$ is projectively equivalent to a canonical representation particular to $\nabla_Y \Pr_1$.
In particular, $\nabla_Y A$ is projectively equivalent to a canonical representation particular to $\nabla_Y \Pr_1$.
Thus by Proposition \ref{P:DeltasAndCanonical} $A$ is projectively equivalent to a canonical representation particular to $\Pr_1$.

Now consider $\Pr_2$.
Since $\nabla_Y\Pr_2$ is obtained from $\Pr_1$ by the addition of an edge that creates a
balanced 2-cycle, by the argument analogous to that of the previous paragraph every $\FF$\-/representation of $M(\Pr_2)$ is projectively equivalent to a canonical representation particular to $\Pr_2$.
Finally, the observation that $\nabla_Y\Pr$ is obtained from $\Pr_2$ by the addition of an edge that creates a balanced 2-cycle, along with the argument analogous to that of the previous paragraph, establishes the statement for $M(\Pr)$.

By Proposition \ref{P:DeltasAndCanonical} and Lemma \ref{thm:All_reps_2C3s_are_canonical}, in no case may $A$ be projectively equivalent to both a canonical lift and a canonical frame matrix.
\end{proof}

\begin{lem} \label{thm:All_reps_K4s_are_canonical}
Let $(K_4,\Bb)$ be a biased graph with no balanced 3-cycle and let $A$ be an $\FF$-matrix representing $M(K_4,\Bb)$.
Then $A$ is projectively equivalent to a canonical lift matrix particular to $(K_4,\mc B)$ or to a canonical frame matrix particular to $(K_4,\mc B)$, but not both.
\end{lem}

\begin{proof}
The biased graph $(K_4,\Bb)$ is isomorphic to $\mathsf D_{0,i}$ for some $i\in\{0,1,2,3\}$.
There is a $K_{1,3}$\-/subgraph $Y$ of $(K_4,\Bb)$ so that $\nabla_Y(K_4,\Bb) \cong \mathsf T_{i+1}$.
Since $\nabla_Y M(K_4,\Bb) \ab = \ab M(\nabla_Y(K_4,\Bb))$, the result follows by Lemma
\ref{thm:All_reps_2C3s_are_canonical} and Proposition \ref{P:DeltasAndCanonical}.
\end{proof}

\begin{lem} \label{thm:All_reps_Tube_are_canonical}
Let $(2C_4'',\Bb)$ be a biased graph with no balanced 2-cycle and let $A$ be an $\FF$-matrix representing $F(2C_4'',\Bb)$.
Then $A$ is projectively equivalent to a canonical frame matrix particular to $(2C_4'',\Bb)$.
\end{lem}

\begin{proof}
Without loss of generality we may assume that $2C_4''$ is labelled as shown in
Figure \ref{fig:TubeEdgeLabels}.
\begin{figure}[tbp] \begin{center}
\includegraphics[scale=0.9]{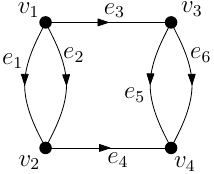}
 \caption{Labels and edge orientations for $2C_4''$.} \label{fig:TubeEdgeLabels}
\end{center} \end{figure}
There are three possibilities for $\Bb$: $|\Bb| \in \{0,1,2\}$.

Assume first that $|\Bb|$ is 0 or 1; \ie either $\Bb=\emptyset$ or, without loss of generality, $\Bb = \{e_1e_3e_4e_6\}$.
Then $A$ is projectively equivalent to the matrix
\begin{linenomath}
\[\label{eqn:A} A' = \bordermatrix{ & e_1 & e_2 & e_3 & e_4 & e_5 & e_6 \cr & 1 & 0 & 0 & 0 & 1 & 1 \cr & 0 & 1 & 0 & 0 & 1 & a \cr & 0 & 0 & 1 & 0 & 1 & b \cr & 0 & 0 & 0 & 1 & 1 & c \cr } \]\end{linenomath}
where $a,b,c \in \FF$ are distinct, neither of $b$ nor $c$ is 0, and none of $a$, $b$, or $c$ are 1;
in the case that $\Bb = \emptyset$, $a \neq 0$, while if $\Bb=\left\{e_1e_3e_4e_5\right\}$ then $a = 0$.
Let
\begin{linenomath}
\[T=\begin{pmatrix} b-a & 1-b & a-1 & 0
\\ a-c & c-1 & 0 & 1-a \\ 0 & 0 & 1-a & 0 \\ 0 & 0 & 0 & a-1 \end{pmatrix}.\]\end{linenomath}
Then $\det(T) = (a-1)^3(c-b)$ so $T$ is nonsingular, and
\begin{linenomath}
\[TA'=\left( \begin{array}{cccccc} b-a & 1-b & a-1 & 0 & 0 & 0 \\ a-c & c-1 & 0 & 1-a & 0 & 0 \\ 0 & 0 & 1-a & 0 & 1-a & (1-a) b \\ 0 & 0 & 0 & a-1 & a-1 & (a-1) c \\ \end{array} \right)\]\end{linenomath}
which has the desired canonical form after column scaling.

So assume $|\Bb|=2$. Without loss of generality, $\Bb \ab = \ab \left\{\ab e_1e_3e_4e_6, \ab e_2e_3e_4e_5\right\}$.
Then $A$ is projectively equivalent to
\begin{linenomath}
\[A' = \bordermatrix{ & e_1 & e_2 & e_3 & e_4 & e_5 & e_6 \cr & 1 & 0 &
0 & 0 & 0 & 1 \cr & 0 & 1 & 0 & 0 & 1 & 0 \cr & 0 & 0 & 1 & 0 & 1 & b \cr & 0 & 0 & 0 & 1 & 1 & c \cr }\]\end{linenomath}
where $b$ and $c$ are nonzero, distinct, and not equal to 1.
Let \begin{linenomath}\[ T =\begin{pmatrix} -b & -1 & 1 & 0 \\ c & 1 & 0 & -1 \\ 0 & 0
& -1 & 0 \\ 0 & 0 & 0 & 1 \end{pmatrix} \]\end{linenomath}
The determinant of $T$ is $b-c$, so $T$ is nonsingular.
Now \begin{linenomath}\[TA'=\left(
\begin{array}{cccccc} -b & -1 & 1 & 0 & 0 & 0 \\ c & 1 & 0 & -1 & 0 & 0 \\ 0 & 0 & -1 & 0 & -1 & -b \\ 0 & 0 & 0 & 1 & 1
& c \\ \end{array} \right)\]\end{linenomath} which has the desired canonical form after column scaling.
\end{proof}

\begin{lem} \label{thm:All_reps_Tube_Lift_are_canonical}
Let $(2C_4'',\Bb)$ be a biased graph with no balanced 2-cycle and let $A$ be an $\FF$-matrix representing $L(2C_4'',\Bb)$.
Then $A$ is projectively equivalent to a canonical lift matrix particular to $(2C_4'',\Bb)$.
\end{lem}

\begin{proof}
We may assume the edges of $2C_4''$ are labelled as in
Figure \ref{fig:TubeEdgeLabels}.
If $|\mc B|=2$, then there is a $\mathrm{GF}(2)^+$-gain function $\gamma$ realizing $(2C_4'',\Bb)$.
Thus $L(2C_4'',\Bb)$ is binary, represented by $A_L(2C_4'',\gamma)$, so $L(2C_4'',\Bb)$ has a projectively unique representation over every field, and the result follows.
So now assume that either $\Bb = \emptyset$ or, without loss of generality, $\Bb = \{e_1e_3e_4e_6\}$.
Since $\{e_1,e_2,e_5,e_6\}$ is a circuit, $A$ is projectively equivalent
to the matrix
\begin{linenomath}\[A' = \bordermatrix{ & e_1 & e_2 & e_3 & e_4 & e_5 & e_6 \cr & 1 & 0 & 0 & 0 & 1 & 1 \cr & 0 & 1 & 0 & 0 & 1 & a
\cr & 0 & 0 & 1 & 0 & 1 & b \cr & 0 & 0 & 0 & 1 & 1 & b \cr }\]\end{linenomath}
for some $a,b\in\FF$, where
$a$ and $b$ are distinct, neither $a$ nor $b$ is 1, $b \not= 0$, and $a = 0$ if and only if $|\Bb|=1$.
Let
\begin{linenomath}\[T=\left( \begin{array}{cccc} 0 & 1-b & 0 & 0 \\ b-a & 1-b & a-1 &
0 \\ a-b & b-1 & 0 & 1-a \\ 0 & 0 & 1-a & 0
\\ \end{array} \right).\]\end{linenomath}
Then $\det(T)=(a-1)^2(a - b)(b-1)\neq0$, so $T$ is nonsingular, and
\begin{linenomath}\[TA'=\left(
\begin{array}{cccccc} 0 & 1-b & 0 & 0 & 1-b & a-a b \\ b-a & 1-b & a-1 & 0 & 0 & 0 \\ a-b & b-1 & 0 & 1-a & 0 & 0 \\ 0 &
0 & 1-a & 0 & 1-a & b-a b \\ \end{array} \right).
\]
\end{linenomath}
After scaling columns appropriately (and appending a fifth row obtained by negating the sum of rows 2, 3, and 4, if desired) this is a canonical lift matrix particular to $(2C_4'', \Bb)$.
\end{proof}

For the almost-balanced case of Theorem \ref{mainthm2} we need the result analogous to the previous lemmas for one more biased graph.
Recall that we denote the graph obtained from $2C_3$ by deleting an edge by $2C_3 \bs e$.

\begin{lem} \label{P:ContractedTubeCanonical}
Let $(2C_3 \bs e,\Bb)$ be a biased graph with no balanced 2-cycle and let $A$ be an $\FF$-matrix representing $M(2C_3 \bs e, \Bb)$.
Then $A$ is projectively equivalent to a canonical lift matrix particular to $(2C_3\bs e,\mc B)$ and
$A$ is projectively equivalent to a canonical frame matrix particular to $(2C_3\bs e,\mc B)$ or to a roll-up of $(2C_3\bs e,\Bb)$.
\end{lem}

\begin{proof}
Assume $2C_3 \bs e$ is labelled as in Figure \ref{fig:2C3minus_e}.
\begin{figure}[tbp]
\begin{center}
\includegraphics[scale=0.9]{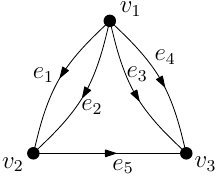}
\end{center}
\caption{$2C_3 \bs e$.}
\label{fig:2C3minus_e}
\end{figure}
Then $\{e_1, e_2, e_3\}$ is a basis, so we may assume the first three columns of $A$ are labelled $e_1$, $e_2$, $e_3$, and that these columns form an identity matrix.
Hence $A$ is projectively equivalent to the matrix
\begin{linenomath}\[A' = \bordermatrix{
	& e_1 & e_2 & e_3 & e_4 & e_5 \cr
	& 1 & 0 & 0 & 1 & 1 \cr
	& 0 & 1 & 0 & 1 & a \cr
	& 0 & 0 & 1 & 1 & b } \]\end{linenomath}
for some $a, b \in \FF$.
Since $\{e_1, e_2, e_5\}$ is not a circuit, $b \neq 0$, and since $\{e_3, e_4, e_5\}$ is not a circuit, $a \not= 1$.
Choose an element $t \neq -1$ and let
\begin{linenomath}
\[ T = \begin{pmatrix}
	t & 1 & -(a+t)/b \\
	1 & -1 & 0 \\
	0 & 0 & (a-1)/b
	\end{pmatrix} .
\]
\end{linenomath}
The determinant of $T$ is $(1-a)(t+1)/b$, so $T$ is nonsingular, and
\begin{linenomath}
\[ TA' =
\begin{pmatrix}
	t & 1 & -(a+t)/b & (b-a+t(b-1))/b & 0 \\
	1 & -1 & 0 & 0 & 1-a \\
	0 & 0 & (a-1)/b & (a-1)/b & a-1
	\end{pmatrix} .
\]
\end{linenomath}
After scaling columns so that every entry in rows 2 and 3 is $\pm 1$, this is a canonical lift matrix particular to $(2C_3 \bs e,\Bb)$, where the first row is the ``gains row'' indexed by $v_0$ and rows 2 and 3 are indexed by vertices $v_2$ and $v_3$, respectively.
Now re-index the first row as $v_1$.
Taking $t=0$ yields a canonical frame matrix particular to a roll-up of $(2C_3 \bs e,\Bb)$.
If $\FF - \{0,-1,-a,(b-a)/(1-b)\}$ is nonempty, then choosing an element from this set for $t$ yields a canonical frame matrix particular to $(2C_3 \bs e, \Bb)$.
\end{proof}

\section{Matrix representations correspond to gain graphs} \label{sec:MainThmProofs}

\subsection{Projectively equivalent canonical representations arise from switching equivalent gain graphs}

A biased graph representing a 3-connected matroid is 2-connected and has no 2-separation with one side inducing a balanced subgraph.
Thus Theorem \ref{mainthm1a} follows immediately from Theorems \ref{T:ProjectiveIsSwitching} and \ref{thm:proj_equiv_iff_switching_equiv_bal_vertex} below.

\subsubsection{Properly unbalanced representations}

%
%
%
\begin{thm}
\label{T:ProjectiveIsSwitching}
Let $(G,\mathcal B)$ be a loopless, 2-connected, properly unbalanced biased graph.
Let $\FF$ be a field.

\textup{(i)}
The canonical frame matrices given by two $\FF^\times$-gain functions $\vp$ and $\psi$ realizing $\GB$ are projectively equivalent if and only if $\vp$ and $\psi$ are switching equivalent.

\textup{(ii)}
The canonical lift matrices given by two $\FF^+$-gain functions $\vp$ and $\psi$ realizing $\GB$ are projectively equivalent if and only if $\vp$ and $\psi$ are switching-and-scaling equivalent.

\textup{(iii)}
Let $\vp$ be an $\FF^\times$\-/realization of $(G,\mathcal B)$ and let $\psi$ be an $\FF^+$\-/realization of $(G,\mathcal B)$.
Then $A_F(G,\vp)$ and $A_L(G,\psi)$ are not projectively equivalent.
\end{thm}

\begin{proof}
For statement (i) (respectively, statement (ii)), if $\vp$ and $\psi$ are switching (resp., switching-and-scaling) equivalent then
$A_F(G,\vp)$ and $A_F(G,\psi)$ (resp., $A_L(G,\vp)$ and $A_L(G,\psi)$) are projectively equivalent by Proposition \ref{P:switch_equiv_implies_proj_equiv}.

For the converse of statement (i) (resp., statement (ii)) assume that $\vp$ and $\psi$ are $\FF^\times$\-/realizations (resp., $\FF^+$\-/realizations) that are not switching (resp., switching-and-scaling) equivalent.
By Theorem \ref{MT:InequivalenceLocalized}, there is a minor $(H,\mathcal S)$ of $(G,\mathcal B)$ such that either $(H,\mathcal S) \in \Gg_0$
with $\vp|_H$ and $\psi|_H$ switching inequivalent
or $(H,\mathcal S)\cong\mathsf U_2$
(resp., $(H, \Ss) \cong \mathsf U_3$)
with $\vp|_H$ and $\psi|_H$ switching
(resp., switching-and-scaling)
inequivalent on the 2-cycle of $\mathsf U_2$
(resp., on the theta subgraph of $\mathsf U_3$).
By Lemmas
\ref{lem:2C3_equil_implies_switching_equiv},
\ref{lem:nobaltriangle},
\ref{lem:4tube_equil_implies_switching_equiv}, and
\ref{lem:U2Inequivalence},
$A_F(H,\vp|_H)$ and $A_F(H,\psi|_H)$ are not projectively equivalent
(resp., by Lemmas
\ref{lem:2C3_equil_implies_switching_equiv_lift_version},
\ref{lem:add_gains_K4_switch_eq_iff_equal},
\ref{lem:4tube_equil_implies_switching_equiv_LIFT}, and
\ref{lem:T3InequivalenceLift},
$A_L(H,\vp|_H)$ and $A_L(H,\psi|_H)$ are not projectively equivalent).
Thus $A_F(G,\vp)$ and $A_F(G,\psi)$ (resp., $A_L(G,\vp)$ and $A_L(G,\psi)$) are not projectively equivalent.

For statement (iii), first observe that if $\GB$ is not tangled, then $F\GB \neq L\GB$ so $A_F(G,\vp)$ and $A_L(G,\psi)$ are certainly not projectively equivalent.
So assume $\GB$ is tangled.
Then by
Corollary \ref{cor:ofInequivalenceLocalized}
$\GB$ has a minor $(H,\Ss) \in \Gg_0$ where $H$ is either $2C_3$ or $K_4$.
Thus by Lemmas
\ref{lem:C_3_frame_lift_not_ech_equiv} and \ref{lem:K4_frame_lift_not_ech_equiv} $A_F(G,\vp)$ and $A_L(G,\psi)$ are not projectively equivalent.
\end{proof}

\subsubsection{Almost balanced representations}

\paragraph{Derived gain functions.}

Recall that when $\GB$ has no two vertex-disjoint unbalanced cycles, $F\GB=L\GB$, and we write $M\GB$ to denote this matroid.
Recall also that for an $\FF^+$-gain function $\vp$, in the case that $G$ is connected, the matrix $A_L(G,\vp)$ is projectively equivalent to the full-rank matrix $A_L^{-v}(G,\vp)$, for any vertex $v \in V(G)$
(as described in Section \ref{sec:on_canonical_lift_representations}).
In particular, if $(G,\Bb_\vp)$ is connected and has a balancing vertex $u$ after deleting all joints, then $A_L(G,\vp)$ is projectively equivalent to $A_L^{-u}(G,\vp)$.
Recall also that for every almost-balanced biased graph $\GB$, there is a family of biased graphs $\Rr_{\GB}$, each member of which represents $M\GB$ as a frame matroid, and there is a uniquely chosen member $\GBu$ of $\Rr_{\GB}$ such that all other members of $\Rr_{\GB}$ are obtained from $\GBu$ as roll-ups (Section \ref{sec:rollups}).

Let $\GB$ be a biased graph with a joint.
No $\mathrm{GF}(2)^\times$-gain function can realize $\GB$, for the trivial reason that $\mathrm{GF}(2)^\times$ has no non-identity element.
Aside from those over $\mathrm{GF}(2)$,
there is a very close relationship between gain functions from the additive and multiplicative groups of a field realizing almost-balanced biased graphs.
Let $\GB$ be a connected almost-balanced biased graph, with balancing vertex $u$ after deleting its set $J$ of joints.
Every gain function realizing $\GB$ is switching equivalent to a gain function assigning the group identity element to each link not incident $u$, obtained by normalizing on a spanning tree of $G-u$.
So let $T$ be a spanning tree of $G-u$, and let $\vp$ be a $T$-normalized $\FF^\times$-gain function realizing $\GB$.
Then there is a $T$-normalized $\FF^+$-gain function $\vp^+$ realizing $\GBu$ obtained from $\vp$ up to loops by simply replacing the multiplicative identity with the additive identity.
That is, set $\vp^+(e)=0$ if $e$ is a link not incident to $u$ and $\vp^+(e)=\vp(e)$ if $e$ is a link incident to $u$.
To complete the definition of $\vp^+$, simply set $\vp^+(e)=0$ if $e$ is a joint not incident to $u$ or a balanced loop, and $\vp^+(e)=-1$ if $e$ is a joint incident to $u$.
Call $\vp^+$ the $\FF^+$-gain function \emph{derived from} $\vp$.

For each unbalancing class $U$ of $\Sigma(u)$, we denote by $(G_U,\Bb_U)$ the roll-up of $\GBu$ in which $U$ is a set of joints.
Let $T_U$ be a spanning tree of $\widehat{G}$ containing an edge in $U$, and let $T$ be the spanning tree of $G-u$ obtained from $T_U$ by deleting its edge incident to $u$.
Let $\psi$ be a $T_U$-normalized $\FF^+$-gain function realizing $\GBu$.
Assume $\FF$ is not $\mathrm{GF}(2)$, and choose an element $a \in \FF^\times$ that is not 1.
Then there is a $T$-normalized $\FF^\times$-gain function $\psi^\times$ realizing $(G_U,\Bb_U)$ obtained from $\psi$ up to loops by simply replacing the additive identity with the multiplicative identity.
That is, set $\psi^\times(e)=1$ if $e$ is a link not incident to $u$ and $\psi^\times(e)=\psi(e)$ if $e$ is a link incident to $u$.
Every link $e$ incident to $u$ in $G_U$ satisfies $\psi(e) \neq 0$, and so satisfies $\psi(e) \in \FF^\times$.
Complete the definition of $\psi^\times$ by simply setting $\psi^\times(e)=1$ if $e$ is a balanced loop and $\psi^\times(e) = a$ if $e$ is a joint.
Call $\psi^\times$ the $\FF^\times$-gain function \emph{derived from} $\psi$.

\begin{lem} \label{lem:bal_vertex_frame_and_lift_proj_equiv}
Let $\GB$ be a 2-connected almost-balanced biased graph and let $\FF$ be a field other than $\mathrm{GF}(2)$.
Let $J$ be the set of joints of $\GB$. Assume $\GB \bs J$ has a unique balancing vertex $u$, and let $T$ be a spanning tree of $G-u$.

\textup{(i)} Let $\vp$ be a $T$-normalized $\FF^\times$-gain function realizing a roll-up $(G_U,\Bb_U)$ of $\GBu$.
Then $A_L^{-u}(\widehat{G},\vp^+)$ is obtained from $A_F(G_U,\vp)$ by scaling columns.

\textup{(ii)} Let $\psi$ be a $T$-normalized $\FF^+$-gain function realizing $\GBu$.
Let $U$ be the (possibly empty) unbalancing class of $\Sigma(u)$ for which $\psi(U)=\{0\}$.
Then $A_F(G_U,\psi^\times)$ is obtained from $A_L^{-u}(\widehat{G},\psi)$ by scaling columns.
\end{lem}

\begin{proof}
(i)
Since $\vp$ is $T$-normalized, $\vp(e) = 1$ for each link $e$ that is not incident to $u$.
We may assume that all edges incident to $u$ are directed into $u$.
Let $A$ be the matrix obtained from $A_F(G_U,\vp)$ by re-indexing row $u$ as  $v_0$ (the ``gains" row) and scaling columns so that
each column that is nonzero in just one row has that nonzero entry equal to $-1$, and columns with a nonzero entry in row $v_0$ have either all other entries 0 or have other nonzero entry equal to $-1$.
Because $\vp$ assigns $1$ to every link not incident to $u$,
$A$ is a full-rank canonical lift matrix (with row $u$ removed) particular to $\GBu$, where each edge in $\Sigma(u)$ is a link directed out from $u$.
Moreover, for each edge $e \in E(G)$, the entry in row $v_0$, column $e$ of $A$ is equal to $\vp^+(e)$, so $A = A_L^{-u}(\widehat{G},\vp^+)$.

(ii)
Since $\psi$ is $T$-normalized, $\psi(e)=0$ for each edge $e$ that is not incident to $u$.
We may assume that all links incident to $u$ in $G$ are directed out from $u$.
Since the rows of $A_L^{-u}(\widehat{G},\psi)$ are indexed by $v_0$ (the ``gains row") and $V(\widehat{G})-u$, every column of $A_L^{-u}(\widehat{G},\psi)$ has at most two nonzero entries.
Let $A$ be the matrix obtained from $A_L^{-u}(\widehat{G},\psi)$ by re-indexing row $v_0$ as row $u$ and scaling columns so that
each column that has exactly two nonzero entries with a nonzero entry in row $u$ has its other nonzero entry equal to $1$,
and every column with just one nonzero entry has its nonzero entry equal to $1-a$, where $a$ is the chosen element of $\FF^\times$ different than 1 that $\psi^\times$ assigns to joints.
Since $\GBu$ has $u$ as a balancing vertex, each column $e$ of $A$ with 0 in row $u$ has either exactly two nonzero entries, which are $1$ and $-1$, and these appear in rows $v, w$ when $e$ has endpoints $v,w$ and neither $v$ nor $w$ is equal to $u$, or column $e$ at most one nonzero entry, which is $1-a$, appearing in row $v$ when $e \in U$ with endpoints $u,v$ in $\widehat{G}$.
Thus $A$ is a canonical frame matrix particular to $(G_U,\Bb_U)$ where each edge in $\Sigma(u)-U$ is a link directed into $u$ and each element in $U$ is a joint.
Moreover, by definition the gain function $\psi^\times$ realizes $(G_U,\Bb_U)$ and $A = A_F(G_U,\psi^\times)$.
\end{proof}

Let $\GB$ be a connected almost-balanced biased graph with a unique balancing vertex $u$ after deleting its joints.
Since we may always assume that all links incident to $u$ are directed either into or out from $u$, and every gain function realizing $\GB$ is switching equivalent to a gain function
assigning the identity element to each link not incident to $u$,
we may define a \emph{derived gain function} \emph{from} any $\FF^\times$- or $\FF^+$-gain function, by first switching appropriately.
Further, for each $\FF^+$-gain function realizing $\GBu$ and each unbalancing class $U \in \Sigma(u)$, we may always switch to obtain an $\FF^+$-gain function $\vp$ realizing $\GBu$ with the property that for each edge $e \in U$, $\vp(e)=0$.
Thus, with this extension of the notion of a derived gain function,  Proposition \ref{P:switch_equiv_implies_proj_equiv} and Lemma \ref{lem:bal_vertex_frame_and_lift_proj_equiv} immediately yield the following.

\begin{cor} \label{cor:bal_vertex_frame_and_lift_proj_equiv}
Let $\GB$ be a 2-connected almost-balanced biased graph with a unique balancing vertex $u$ after deleting its set of joints, and let $\FF$ be a field other than $\mathrm{GF}(2)$.

\textup{(i)} For every $\FF^+$-gain function $\vp$ realizing $\GBu$, and every unbalancing class $U \subseteq \Sigma(u)$, there is a derived $\FF^\times$-gain function $\vp^\times$ realizing the roll-up $(G_U,\Bb_U)$ of $\GBu$ such that $A_L(\widehat{G},\vp)$ and $A_F(G_U,\vp^\times)$ are projectively equivalent.

\textup{(ii)} For every $\FF^\times$-gain function $\psi$ realizing a roll-up $(G_U,\Bb_U)$ of $\GBu$, there is a derived $\FF^+$-gain function $\psi^+$ realizing $\GBu$ for which $A_F(G_U,\psi)$ and $A_L(\widehat{G},\psi^+)$ are projectively equivalent.

\textup{(iii)} There is an $\FF^\times$-gain function realizing $\GBu$ if and only if there is an $\FF^+$-gain function $\vp$ realizing $\GBu$ for which $\vp(e) \neq 0$ for each edge in $\Sigma(u)$.
\end{cor}

Equipped with the above tool, we can now state and prove a result analogous to Theorem \ref{T:ProjectiveIsSwitching} for almost-balanced biased graphs.

\begin{thm} \label{thm:proj_equiv_iff_switching_equiv_bal_vertex}
Let $\GB$ be a 2-connected, almost-balanced biased graph with a unique balancing vertex $u$ after deleting its joints,
with no joint incident to $u$,
and with no vertical 2-separation with one side balanced.
Let $\FF$ be a field.

\textup{(i)} The canonical lift matrices given by two $\FF^+$-gain functions $\vp$ and $\psi$ realizing $\GBu$ are projectively equivalent if and only if $\vp$ and $\psi$ are switching-and-scaling equivalent.

\textup{(ii)} Let $U$ and $W$ be unbalancing classes of $\Sigma(u)$.
Let $\vp$ and $\psi$ be $\FF^\times$-gain functions realizing $(G_U,\Bb_U)$ and $(G_W,\Bb_W)$, respectively.
The canonical frame matrices given by $\vp$ and $\psi$ are projectively equivalent if and only if their derived gain functions $\vp^+$ and $\psi^+$ are switching-and-scaling equivalent.

\textup{(iii)} Let $U$ be an unbalancing class of $\Sigma(u)$.
The canonical lift matrix given by an $\FF^+$-gain function $\vp$ realizing $\GBu$ and the canonical frame matrix given by an $\FF^\times$-gain function $\psi$ realizing $(G_U,\Bb_U)$ are projectively equivalent if and only if $\vp$ and the derived gain function $\psi^+$ are switching-and-scaling equivalent.

\textup{(iv)} Let $\vp$ and $\psi$ be $\FF^+$- and $\FF^\times$-gain functions, respectively, realizing $\GBu$.
The canonical lift matrix given by $\vp$ and the canonical frame matrix given by $\psi$ are projectively equivalent if and only if $\vp$ and the derived gain function $\psi^+$ are switching-and-scaling equivalent.
\end{thm}

\begin{proof}
(i)
Let $\vp$ and $\psi$ be $\FF^+$-gain functions realizing $\GBu$.
If $\vp$ and $\psi$ are switching-and-scaling equivalent then  by Proposition \ref{P:switch_equiv_implies_proj_equiv} $A_L(\widehat{G},\vp)$ and $A_L(\widehat{G},\psi)$ are projectively equivalent.

Conversely, suppose for a contradiction that $A_L(\widehat{G},\vp)$ and $A_L(\widehat{G},\psi)$
are projectively equivalent
but that $\vp$ and $\psi$ are not switching-and-scaling equivalent.
We may assume that all edges incident to $u$
have $u$ as their tail.
By switching we may assume that $\vp(e) = \psi(e) = 0$ for each edge $e$ not incident to $u$.
Consider the full-rank canonical lift matrices $A_L^{-u}(\widehat{G},\vp)$ and $A_L^{-u}(\widehat{G},\psi)$ with rows indexed by $v_0 \cup \br{V(G) - u}$, where $v_0 \notin V(G)$ is the ``gains row''.
Put $A=A_L^{-u}(\widehat{G},\vp)$ and $B=A_L^{-u}(\widehat{G},\psi)$, and suppose $T$ is a nonsingular matrix such that $TAS=B$, where $S$ is a nonsingular diagonal column-scaling matrix.
Let the rows and columns of $T$ be indexed by $v_0 \cup (V(G)-u)$ according to the rows of $A$.
By statement (ii) of Lemma \ref{P:switch_equiv_iff_diagonal} either $T$ has an entry on its main diagonal that is not 1 or $T$ has a nonzero entry off its main diagonal, in either case in a row other than $v_0$.
Suppose first $T$ has all entries off its main diagonal equal to 0, aside from those in row $v_0$.
Let $U$ be the $|V(G)| \times |V(G)|$ diagonal matrix with rows and columns indexed by $v_0 \cup V(G)-u$ in which entry $U_{v_0 v_0}=1$ and entry $U_{vv} = a\inv$ if entry $T_{vv} = a$.
Since $T$ is non-singular no such entry is 0.
Removing the row and column of $UT$ indexed by $v_0$ leaves an identity matrix and $(UT)AR=B$, where $R$ is an appropriate diagonal matrix scaling the columns of $(UT)A$.
Thus $\vp$ and $\psi$ are switching-and-scaling equivalent by statement (ii) of Lemma \ref{P:switch_equiv_iff_diagonal}, contrary to assumption.

So assume there is a nonzero element off the main diagonal of $T$ in a row other than $v_0$.
Suppose the entry in row $x$, column $y$ is nonzero, where $x \neq y$ and $x \neq v_0$.
Since $G$ is 2-connected and has no vertical 2-separation with one side balanced, there is an unbalanced cycle $C$ avoiding $x$ while containing $y$.
Let $f, f'$ be the edges of $C$ incident to $u$.
Then $\vp(f) \neq \vp(f')$.
Denote by $T_x$ row $x$ of $T$ and by $A_e$ column $e$ of $A$.
Consider the equations $T_x \cdot A_e = B_{xe}$ given by each of the dot products of row $x$ of $T$ with column $e$ of $A$, for each $e \in E(C)$.
Since $C$ avoids $x$, for each edge $e \in E(C)$ entry $B_{xe}$ is zero.
There are precisely two nonzero entries in each column $e$ of $A$ with $e \in E(C)$, and other than columns $f$ and $f'$ one of these two entries is 1 and the other is $-1$.
Thus the system of equations $\{T_x \cdot A_e = 0 : e \in E(C)\}$ imply $\vp(f) = \vp(f')$, a contradiction.

(ii)
If $\vp^+$ and $\psi^+$ are switching-and-scaling equivalent, then by Proposition \ref{P:switch_equiv_implies_proj_equiv}, $A_L(\widehat{G},\vp^+)$ and $A_L(\widehat{G},\psi^+)$ are projectively equivalent.
Hence by Corollary \ref{cor:bal_vertex_frame_and_lift_proj_equiv}, $A_F(G_U,\vp)$ and $A_F(G_W,\psi)$ are projectively equivalent.
Conversely, suppose $A_F(G_U,\vp)$ and $A_F(G_W,\psi)$ are projectively equivalent.
Then by Corollary \ref{cor:bal_vertex_frame_and_lift_proj_equiv}, $A_L(\widehat{G},\vp^+)$ and $A_L(\widehat{G},\psi^+)$ are projectively equivalent, and so by statement (i), $\vp^+$ and $\psi^+$ are switching-and-scaling equivalent.

The proofs of statements (iii) and (iv) are straightforward modifications of the proof of (ii).
\end{proof}

\subsection{Matrix representations arise from biased graph representations}

Theorem \ref{mainthm2} is an immediate consequence of Theorem \ref{thm:all_reps_eqiv_to_cannonical} below (together with a straightforward check for the case of rank 2).

\begin{thm} \label{thm:all_reps_eqiv_to_cannonical}
Let $M$ be a 3-connected matroid of rank greater than two, and let $\FF$ be a field.
Let $A$ be a matrix over $\FF$ representing $M$ and let $\GB$ be a biased graph representing $M$.
If $\GB$ is properly unbalanced then exactly one of the following holds.
\begin{enumerate}[label=\textup{(\roman*)}]
\item $A$ is projectively equivalent to a canonical lift matrix particular to $\GB$, or
\item $A$ is projectively equivalent to a canonical frame matrix particular to $\GB$.
\end{enumerate}
If $\GB$ is almost-balanced then each of the following hold, unless $\FF$ is $\mathrm{GF}(2)$, in which case  \emph{(i)} holds.
\begin{enumerate}[label=\textup{(\roman*)}]
\item $A$ is projectively equivalent to a canonical lift matrix particular to $\GBu$, and
\item $A$ is projectively equivalent to a canonical frame matrix particular to each roll-up of $\GBu$.
\end{enumerate}
\end{thm}


Theorem \ref{thm:all_reps_eqiv_to_cannonical} follows immediately from Theorems \ref{T:MainTheorem1} and \ref{thm:MainThm_bal_vertex_case} below.

\begin{thm} \label{T:MainTheorem1}
Let $M$ be a matroid represented by a 2-connected, properly unbalanced biased graph $(G,\Bb)$.
Let $\FF$ be a field and let $A$ be a matrix over $\FF$ representing $M$.
Exactly one of the following holds:
\begin{enumerate}[label=\textup{(\roman*)}]
\item $A$ is projectively equivalent to a canonical lift matrix particular to $\GB$, or
\item $A$ is projectively equivalent to a canonical frame matrix particular to $\GB$.
\end{enumerate}
\end{thm}

We will require the following fact on several occasions.
It follows immediately from the fact that a pair of edges incident to a vertex of degree two form a series pair in the matroid.

\begin{lem} \label{P:Subdivisions}
Let $(H,\mathcal S)$ be a subdivision of $(G,\mathcal B)$. Let $\FF$ be a field and let $\Gamma \in \{\FF^\times, \FF^+\}$.

\textup{(i)} The $\FF$\-/matrix representations of $F(H,\mathcal S)$ are in one-to-one correspondence with the $\FF$\-/matrix representations of $F(G,\mathcal B)$ up to projective equivalence.

\textup{(ii)} The $\FF$\-/matrix representations of $L(H,\mathcal S)$ are in one-to-one correspondence with the $\FF$\-/matrix representations of $L(G,\mathcal B)$ up to projective equivalence.

\textup{(iii)} The $\Gamma$\-/realizations of $(H,\mc S)$ are in one-to-one correspondence with the $\Gamma$\-/realizations of $(G,\mc B)$ up to switching (resp., switching-and-scaling).
\end{lem}

We also need the following more technical fact to prove Theorem \ref{T:MainTheorem1}.

\begin{lem} \label{L:TangledDoesntExtend}
Let $\GB$ be a connected biased graph with a joint $e$ such that $\GB \bs e$ is a biased $2C_3$ with no balanced 2-cycle or a biased $K_4$ with no balanced triangle.
Let $\FF$ be a field, and let $\vp$ be an $\FF^\times$- or $\FF^+$-gain function on $G$.

\textup{(i)} If $A_F(G \bs e, \vp)$ represents $M(\GB \bs e)$ then $A_F(G\bs e,\vp)$ does not extend to an $\bb F$\-/representation of $L\GB$.

\textup{(ii)} If $A_L(G \bs e,\vp)$ represents $M(\GB \bs e)$ then $A_L(G \bs e, \vp)$ does not extend to an $\bb F$\-/representation of $F\GB$.
\end{lem}

\begin{proof}
We give a detailed proof for the case in which $\GB \bs e$ is a biased $2C_3$.
The case for which $\GB \bs e$ is a biased $K_4$ follows from $\Delta$-$Y$ and $Y$-$\Delta$ exchanges, by Propositions \ref{prop:DeltaYoperation} and \ref{P:DeltasAndCanonical}.

(i) Suppose for a contradiction that there is a matrix $A$ over $\FF$ representing $L\GB$ such that removing column $e$ from $A$ yields the matrix $A_F(G \bs e, \vp)$.
We may assume that $A$ has full rank, and so has three rows.
Since $e$ is not a loop of $L\GB$, column $e$ of $A$ is nonzero.
If column $e$ has just one nonzero entry, then $A$ is an $\bb F$\-/representation of a matroid $F(\Omega)$ where $\Omega$ is obtained from $\GB \bs e$ by adding a joint to a vertex.
But comparing circuits we see that $F(\Omega) \neq L\GB$, a contradiction.
If column $e$ has exactly two nonzero entries, then $A$ is an $\bb F$\-/representation of a matroid $F(\Omega)$, where $\Omega$ is obtained from $\GB \bs e$ by adding a link.
But again comparing circuits we see that $F(\Omega) \neq L\GB$, a contradiction.
So finally suppose column $e$ has three nonzero entries.
Let $X$ be an unbalanced 2-cycle of $\Om$.
Then $X \cup e$ is a circuit of $L\GB$ but the columns of $A$ corresponding to $X \cup e$ are linearly independent, a contradiction.

(ii) Suppose for a contradiction that there is a matrix $A$ representing $F\GB$ such that removing column $e$ from $A$ yields $A_L(G \bs e, \vp)$.
Let $V(G) = \{v_1, v_2, v_3\}$ where $e$ is incident to $v_1$.
Then $A \bs e = A_L(G \bs e, \vp)$ has rows indexed by $v_0 \cup V(G)$ where $v_0 \notin V(G)$ is the ``gains row" and removing row $v_0$ from $A_L(G \bs e, \vp)$ leaves the oriented incidence matrix of $G \bs e$.
Since $F\GB$ has rank three while $A$ has four rows, $A$ is not of full rank.
Since in $A \bs e$ row $v_0$ is not in the span of $\{v_1, v_2, v_3\}$, neither is row $v_0$ in the span of rows $\{v_1, v_2, v_3\}$ in $A$.
Thus in $A$ row $v_3$ is in the span of rows $v_1$ and $v_2$.
Since any linear combination of rows $v_1$, $v_2$, and $v_3$ in $A$ yields a corresponding linear combination in $A \bs e$, this implies that the entries of $A$ in rows $v_1$, $v_2$, and $v_3$ of column $e$ sum to zero.
Put $A_{v_1 e} = a$, $A_{v_2 e} = b$; then $A_{v_3 e} = -(a+b)$.

First suppose that $a=b=0$.
As $e$ is not a loop of $F\GB$, column $e$ is nonzero. Thus $A_{v_0 e} \neq 0$.
Let $X$ be the unbalanced 2-cycle consisting of the pair of edges linking $v_2$ and $v_3$.
Then $X \cup e$ is independent in $F\GB$ but the columns of $A$ representing $X \cup e$ are linearly dependent, a contradiction.
Next suppose $a=-b$.
Let $X$ be the unbalanced 2-cycle consisting of the edges linking $v_1$ and $v_3$.
The set $X \cup e$ is a circuit in $F\GB$ but has columns linearly independent in $A$, a contradiction.
Finally, suppose none of $a$, $b$, nor $-(a+b)$ are zero.
Let $X$ be the unbalanced 2-cycle consisting of the edges linking $v_1$ and $v_2$.
Then $X \cup e$ is a circuit of $F\GB$ but has columns linearly independent in $A$, a contradiction.
\end{proof}

\begin{proof}[Proof of Theorem \ref{T:MainTheorem1}]
We show that if $M=F\GB$ then there is an $\FF^\times$-gain function $\vp$ such that $A$ is projectively equivalent to $A_F(G,\vp)$,
that if $M=L\GB$ then there is an $\FF^+$-gain function $\psi$ such that $A$ is projectively equivalent to $A_L(G,\psi)$, and
that if $\GB$ is tangled, so $F\GB=L\GB$, then $A$ is
not projectively equivalent to both $A_F(G,\vp)$ and $A_L(G,\psi)$.

By Theorem \ref{MT:Unavoidsubdivisions}, $(G,\Bb)$ contains a subgraph $\Om_0$ that is a subdivision of a biased graph in $\Tt_0$.
Let $(G_0,\Bb_0)$ be the biased subgraph of $\GB$ induced by $E(\Om_0)$, with $V(G_0)=V(G)$.
Let $A_0$ be the submatrix of $A$ consisting of the columns whose elements are in $E(\Om_0)$.
By Lemmas
\ref{thm:All_reps_2C3s_are_canonical}, \ref{L:SplitsOf2C3}, \ref{thm:All_reps_K4s_are_canonical},
\ref{thm:All_reps_Tube_are_canonical}, or \ref{thm:All_reps_Tube_Lift_are_canonical}, and
Lemma \ref{P:Subdivisions},
there exists an $\FF^\times$-gain function $\vp_0$ such that $A_0$ is projectively equivalent to $A_F(G_0,\varphi_0)$,
or there exists an $\FF^+$-gain function $\psi_0$ such that $A_0$ is projectively equivalent to $A_L(G_0, \psi_0)$, but not both.

Let $J$ be the set of joints of $\GB$.
Since both $\Om_0$ and $G$ are 2-connected, there is a sequence of 2-connected biased subgraphs
$\Om_0 \subset \Om_1 \subset \cdots \subset \Om_n$
where $\Om_n = \GB \bs J$ such that for each $i \in \{0,\ldots,n-1\}$ there is a path $P_i$ in $G$ internally disjoint from $\Om_i$
so that $\Om_i \cup P_i = \Om_{i+1}$.
For each $i \in \{0,\ldots,n\}$, let $(G_i,\Bb_i)$ be the biased subgraph of $\GB$ induced by $E(\Om_i)$ with $V(G_i)=V(G)$.
Let $A_i$ be the submatrix of $A$ consisting of all rows of $A$ and precisely those columns representing $E(\Om_i)$.
Thus for each $i$, in the case that $M=F\GB$, $A_i$ represents $F(\Om_i)$; in the case that $M=L\GB$, $A_i$ represents $L(\Om_i)$.
Inductively assume that for some $i \geq 0$ there exist nonsingular matrices $T_0, \ldots, T_i$, nonsingular diagonal matrices $S_0, \ldots, S_i$, and gain functions $\vp_0, \ldots, \vp_i$, such that either
\begin{enumerate}
\item[(1)] $T_j A_j S_j = A_F(G_j,\varphi_j)$ for each $j \in \{0, \ldots, i\}$, or
\item[(2)] $T_j A_j S_j = A_L(G_j,\psi_j)$ for each $j \in \{0, \ldots, i\}$.
\end{enumerate}
We will show the same projective equivalence for $A_{i+1}$.
We first obtain this conclusion in the case that $P_i$ consists of a single edge.
Then if $P_i$ has length greater than one, the conclusion follows from Lemma \ref{P:Subdivisions}.
So suppose $P_i$ consists of a single edge $e_i$ linking vertices $u_i, v_i \in V(\Om_i)$.
We consider cases (1) and (2) above separately.

(1)
Consider the matrix $T_i A_{i+1}$.
Matrix $A_{i+1}$ has rows indexed by $V(G)$, according to the indexing of the corresponding rows of $T_i A_i S_i$, and columns indexed by $E(G_{i+1})$.
We first show that column $e_i$ of $T_iA_{i+1}$ is zero in every row aside from $u_i$ and $v_i$.
Suppose for a contradiction that $e_i$ is nonzero in a row $x$ of $T_i A_{i+1}$ differing from $u_i$ and $v_i$.
Since $\Om_0$ does not have a balancing vertex, neither does $\Om_i$.
Thus $\Om_i-x$ is unbalanced and connected.
Hence there is a subset $U_i \subseteq E(\Om_i-x)$ that induces a subgraph of $\Om_i$ that is a spanning tree of $\Om_i-x$ along with one additional edge whose fundamental cycle with respect to this tree is unbalanced.
Contained in $U_i \cup e_i$ is a biased subgraph $C$ whose edge set is a circuit of $F(\Om_{i+1})$.
The subgraph $C$ contains $e_i$ and does not contain $x$.
Since $T_i A_i S_i = A_F(G_i,\vp_i)$, the columns of $T_i A_{i+1}$ representing $E(C)-e_i$ are all zero in row $x$.
Hence while $C$ is a circuit of $F(\Om_{i+1})$ the columns of $T_i A_{i+1}$ representing $C$ are linearly independent, a contradiction.

We now show that both rows $u_i$ and $v_i$ in column $e_i$ of $T_i A_{i+1}$ are nonzero.
Since $e_i$ is not a loop of $M$, at least one entry of column $e$ is nonzero; without loss of generality assume its entry in row $u_i$ is not zero.
For a contradiction, suppose its entry in row $v_i$ is zero.
Let $Q$ be a subgraph of $\Om_i-v_i$
consisting of an unbalanced cycle and a path (possibly trivial) connecting this cycle to $u_i$.
In $F(\Om_{i+1})$, $E(Q) \cup e_i$ is independent, but the columns of $T_i A_{i+1}$ representing $E(Q) \cup e_i$ are linearly dependent, a contradiction.
Thus column $e_i$ of $T_i A_{i+1}$ is nonzero in precisely its rows $u_i, v_i$ corresponding to the endpoints of edge $e_i$ in $\Om_i$.
Let $T_{i+1} = T_i$ and let $S_{i+1}$ be the diagonal matrix obtained from $S_i$ by adding a column to scale column $e_i$ of $T_{i+1} A_{i+1}$ so that its entry in row $u_i$ is 1.
Extend the $\FF^\times$-gain function $\vp_i$ by defining $\vp_{i+1}(e_i)$ to be $-(T_{i+1} A_{i+1} S_{i+1})_{v_i e}$.
Now $T_{i+1} A_{i+1} S_{i+1}$ is the canonical frame matrix $A_F(G_{i+1}, \vp_{i+1})$.
By induction, there is a nonsingular matrix $T_n$, a diagonal matrix $S_n$, and a gain function $\vp_n$ such that  $T_n A_n S_n = A_F(G_n,\vp_n)$.

If $\GB$ has no joints we are done.
So assume $J$ is nonempty.
We now claim that $M \neq L\GB$.
For suppose contrarily that $M=L\GB$.
Since $T_nA_nS_n$ is
a canonical frame representation of $M \bs J$, it must be the case that $\Om_n$ is tangled.
By Theorem \ref{T:TangledMinor} $\Om_n$ contains a link minor $(H,\Ss)$ that is either a biased $2C_3$ with no balanced 2-cycle or a biased $K_4$ with no balanced triangle.
Since $J$ is nonempty $\GB$ has a link minor $(H',\Ss')$ where $(H',\Ss')$ is obtained by adding a joint $e$ incident to a vertex of $(H,\Ss)$.
Since $T_n A S_n$ is a representation over $\FF$ for $L\GB$ that agrees with $T_n A_n S_n$ on all elements aside from possibly those in $J$, and since $(H',\Ss')$ is a link minor of $\GB$, by Lemma \ref{lem:gain_graph_and_canonical_matrix_minors}
there is a matrix $B$ over $\FF$ representing $L(H',\Ss')$ with the property that $B \bs e$ is a canonical frame matrix particular to $(H,\Ss)$.
But this is impossible by Lemma \ref{L:TangledDoesntExtend}.
Thus $M \neq L\GB$.

Finally, we show that each column $e$ of $T_n A$ for which $e \in J$ has exactly one nonzero entry.
Suppose $e \in J$ has endpoint $u$ and that column $e$ of $T_n A$ is nonzero in row $v \neq u$.
Let $C$ be the edge set of an unbalanced cycle in $\Om_n - v$ together with a path linking this cycle and $u$.
Then $C$ is a circuit of $F\GB$ but its corresponding columns in $T_n A$ are linearly independent, a contradiction.
Thus there is a diagonal matrix $S$ scaling the columns of $T_n A$ such that $T_n A S$ is a canonical frame matrix particular to $\GB$.

(2)
We proceed as in case (1), considering the matrix $T_i A_{i+1}$.
Matrix $A_{i+1}$ has rows indexed by $V(G) \cup v_0$, according to the indexing of the corresponding rows of $T_i A_i S_i$ where $v_0 \notin V(G)$ corresponds to the ``gains row'' of $T_i A_{i} S_{i}$, and columns indexed by $E(G_{i+1})$.
We first show that column $e_i$ of $T_iA_{i+1}$ is zero in every row aside from $u_i$, $v_i$, and $v_0$.
Suppose for a contradiction that $e_i$ is nonzero in a row $x \notin \{u_i, v_i, v_0\}$ of $T_i A_{i+1}$.
As in case (1), let
$U_i \subseteq E(\Om_i-x)$ be a set of edges inducing a subgraph of $\Om_i$ that is a spanning tree of $\Om_i-x$ along with one additional edge whose fundamental cycle with respect to this tree is unbalanced.
Contained in $U_i \cup e_i$ is a biased subgraph $C$ whose edge set is a circuit of $L(\Om_{i+1})$.
The subgraph $C$ contains $e_i$ and does not contain $x$.
Since $T_i A_i S_i = A_L(G_i,\psi_i)$, the columns of $T_i A_{i+1}$ representing $E(C)-e_i$ are all zero in row $x$.
Hence while $C$ is a circuit of $L(\Om_{i+1})$ the columns of $T_i A_{i+1}$ representing $C$ are linearly independent, a contradiction.

Since $\Om_i$ is unbalanced and 2-connected and $\Om_{i+1}$ is obtained from $\Om_i$ by adding a single edge,  $r(L(\Om_{i+1})) = r(L(\Om_i))$.
Thus $A_i$ and $A_{i+1}$ have the same rank.
Since row $v_0$ of $T_i A_i S_i$ is not in the span of the rows $V(G)$ neither is row $v_0$ of $T_i A_{i+1}$ in the span of the rows in $V(G)$.
But the sum of the rows in $V(G)$ of $T_i A_i$ is zero, so likewise the sum of the rows in $V(G)$ of $T_i A_{i+1}$ must be zero:
otherwise the rank of $T_i A_{i+1}$ would be greater than that of $T_i A_i$, a contradiction.

Suppose first that both entries of column $e$ in rows $u_i$ and $v_i$ are zero.
Element $e_i$ is not a loop of $M$, so then its entry in row $v_0$ is nonzero.
Let $C$ be an unbalanced cycle in $\Om_i - u_i$.
Then $C \cup e$ is independent in $L(\Om_{i+1})$ but the columns of $T_i A_{i+1}$ representing $C \cup e$ are linearly dependent, a contradiction.
Thus column $e_i$ has entries $a$ and $-a$ in rows $u_i$ and $v_i$, where $a \neq 0$.
Take $T_{i+1} = T_i$ and let $S_{i+1}$ be the diagonal matrix obtained by adding a column to $S_i$ to scale column $e_i$ of $T_{i+1} A_{i+1}$ by $a\inv$.
Extend the $\FF^+$-gain function $\psi_i$ to $E(G_{i+1})$ by defining $\psi(e_i)$ to be the entry in row $v_0$ of column $e_i$.
Now $T_{i+1} A_{i+1} S_{i+1}$ is the canonical lift matrix $A_L(G_{i+1}, \psi_{i+1})$.
By induction, there is a nonsingular matrix $T_n$, a diagonal matrix $S_n$, and a gain function $\psi_n$ such that  $T_n A_n S_n = A_L(G_n,\psi_n)$.

If $\GB$ has no joints we are done.
So assume $J$ is nonempty.
Analogous to the situation in case (1), we now claim that $M \neq F\GB$.
Suppose to the contrary that $M = F\GB$.
Since $T_nA_nS_n$ is a canonical lift representation of $M \bs J$, it must be the case that $\Om_n$ is tangled.
By Theorem \ref{T:TangledMinor}, $\Om_n$ contains a link minor $(H,\Ss)$ that is either a biased $2C_3$ with no balanced 2-cycle or a biased $K_4$ with no balanced triangle.
Since $J$ is nonempty $\GB$ has a link minor $(H',\Ss')$ where $(H',\Ss')$ is obtained by adding a joint $e$ incident to a vertex of $(H,\Ss)$.
Since $T_n A S_n$ is a representation over $\FF$ for $F\GB$ that agrees with $T_n A_n S_n$ on all elements aside from possibly those in $J$, and since $(H',\Ss')$ is a link minor of $\GB$, by Lemma \ref{lem:gain_graph_and_canonical_matrix_minors}
there is a matrix $B$ over $\FF$ representing $F(H',\Ss')$ with the property that $B \bs e$ is a canonical lift matrix particular to $(H,\Ss)$.
This violates Lemma \ref{L:TangledDoesntExtend}, so $M \neq F\GB$.

Finally, we show that each column $e$ of $T_n A$ for which $e \in J$ has a nonzero entry only in row $v_0$.
Suppose for a contradiction that $e \in J$ with column $e$ of $T_n A$ nonzero in row $v \neq v_0$.
Let $C$ be the edge set of an unbalanced cycle in $\Om_n - v$.
Then $C \cup e$ is a circuit of $L\GB$ but its corresponding columns in $T_n A$ are linearly independent, a contradiction.
Thus $T_n A S_n$ is a canonical lift matrix particular to $\GB$.

This completes the proof that at least one of statements (i) or (ii) of the theorem hold.
But $A_0$ is not projectively equivalent to both a canonical frame matrix and a canonical lift matrix particular to $(G_0,\Bb_0)$.
Thus neither is $A$ projectively equivalent to both a canonical frame matrix and a canonical lift matrix particular to $\GB$.
\end{proof}

The first statement of Theorem \ref{thm:all_reps_eqiv_to_cannonical} follows immediately from Theorem \ref{T:MainTheorem1}.
For the second statement of Theorem \ref{thm:all_reps_eqiv_to_cannonical}, we would like to show that for each 3-connected matroid $M$ represented by an almost-balanced biased graph $\GB$, given any matrix $A$ representing $M$, $A$ is projectively equivalent to a canonical lift matrix particular to $\GBu$.
Unfortunately, this can fail in the case that $M$ has rank 2.
Let $M$ be a 3-connected rank-2 matroid represented by the biased graph $(G,\emptyset)$ consisting of $E(M)-1$ links between a pair of vertices and a single joint.
Let $\FF$ be a field and let $A$ be a matrix over $\FF$ representing $M$.
Then $A$ is projectively equivalent to the matrix
\begin{linenomath}
\begin{equation*} \label{eqn:a_rank_two_matrix}
A' =
\begin{pmatrix}
1 & 0 & 1 & 1 & \cdots & 1 \\
0 & 1 & a_1 & a_2 & \cdots & a_{n-2}
\end{pmatrix}
\end{equation*}
\end{linenomath}
where $n = |E(M)|$, for each $i$, $a_i \notin \{0,1\}$, and the $a_i$ are distinct.
The matrix $A'$ is a canonical frame representation particular to a roll-up of $(G,\emptyset)$, and $A'$ is a full-rank canonical lift matrix particular to $(G,\emptyset)$ (with its second row as the ``gains row'' indexed by $v_0$, and "missing" its row indexed by one of the vertices of $G$).
It is straightforward to see that by elementary row operations and column scaling we may obtain from $A'$ a canonical frame matrix particular to $(G,\emptyset)$.
However, $(\widehat{G},\widehat{\emptyset})$ is the loopless contrabalanced biased graph obtained from $(G,\emptyset)$ by unrolling its joint, and there is no guarantee that $A$ need be projectively equivalent to a canonical matrix representation particular to $(\widehat{G},\widehat{\emptyset})$.
Indeed, there is no guarantee that such a representation exists.
For instance, if $n=6$ and $\FF$ is $\mathrm{GF}(5)$, then the above argument shows that $A$ is projectively equivalent to a canonical frame matrix particular to a roll-up of $(G,\emptyset)$, and to a canonical lift matrix particular to $(G,\emptyset)$, but a canonical lift representation particular to $(\widehat{G},\widehat{\emptyset})$ requires six distinct elements of $\FF^+$, so no such matrix exists.
The problem is that the field is just too small by one element to permit a canonical lift representation of a such a rank-2 matroid,
Fortunately, this is the only problem that may occur:

\begin{thm}
\label{thm:MainThm_bal_vertex_case}
Let $M$ be a matroid of rank greater than two represented by a 2-connected almost-balanced biased graph $\GB$ having no 2-separation with one side balanced.
Let $\FF$ be a field and let $A$ be an $\bb F$-matrix representing $M$.
Then
\begin{enumerate}[label=\textup{(\roman*)}]
\item $A$ is projectively equivalent to a canonical lift matrix particular to $\GBu$, and
\item $A$ is projectively equivalent to a canonical frame matrix particular to each roll-up of $\GBu$, unless $\FF$ is $\mathrm{GF}(2)$.
\end{enumerate}
Furthermore, $A$ is projectively equivalent to a canonical frame matrix particular to $\GBu$ if and only if whenever $\vp$ is an $\FF^+$-gain function for which $A$ and $A_L(\widehat{G},\vp)$ are projectively equivalent, $\vp$ is switching equivalent to a gain function assigning $0$ to $e$ if and only if $e$ is a link not incident to $u$ or $e$ is a loop of $M$.
\end{thm}

We will need the following straightforward fact.

\begin{lem} \label{lem:D10deltaYT2prime}
Let $C$ be the balanced triangle of $D_{1,0}$ and let $Y$ be a $K_{1,3}$-subgraph of $D_{1,0}$ meeting $C$ in exactly two edges.
Then
the simplification of $\nabla_Y \mathsf D_{1,0}$ is isomorphic to $\mathsf B_1'$.
\end{lem}

\begin{proof}[Proof of Theorem \ref{thm:MainThm_bal_vertex_case}]
Suppose first that $\GBu$ does not contain a contrabalanced theta subgraph.
The only  biased graph representing $U_{2,4}$ without a contrabalanced theta is shown at right in Figure \ref{fig:U24unlabelled}.
Moreover, $F_7$ is neither frame nor lifted-graphic, and the only biased graphs representing $F_7^*$, $M^*(K_5)$, and $M^*(K_{3,3})$ are all properly unbalanced \cite{MR1058551}.\footnote{The mistake in \cite{MR1058551}, corrected at \url{http://people.math.binghamton.edu/zaslav/Tpapers/index.html} does not affect this claim.}
Since none of these biased graphs can occur as a minor of $\GBu$ and $\GBu$ represents $M$, $M$ contains none of $U_{2,4}$, $F_7$, $F_7^*$, $M^*(K_5)$, nor $M^*(K_{3,3})$ as a minor.
Thus $M$ is graphic.
The first two statements follow. The third statement follows from
Corollary \ref{cor:bal_vertex_frame_and_lift_proj_equiv}.

So assume now that $\GBu$ contains a contrabalanced theta subgraph.
Let $v$ be a balancing vertex of $\GBu$ and let $J$ be the set of joints of $\GBu$.
Since the set of joints not incident to $v$ form an unbalancing class of $\Sigma(v)$ in $\GB$, and all of the unbalancing classes of $\Sigma(v)$ are unrolled in $\GBu$, every joint of $\GBu$ is incident to $v$.
We claim that $\GBu \bs J$ does not have another balancing vertex $u \neq v$.
For suppose to the contrary that $\GBu \bs J$ has a balancing vertex $u \neq v$.
Then $\GBu \bs J$ has the structure
described in Proposition \ref{prop:Zas_fattheta}: there are graphs $G_1, \ldots, G_m$ such that $\widehat{G} = G_1 \cup \dots \cup G_m$ and $G_j \cap G_k = \{u,v\}$ for each pair $j \neq k$ and a cycle is in $\widehat{\Bb}$ if and only if it is contained in a single graph $G_j$.
Since $\GBu$ contains a contrabalanced theta, $m \geq 3$.
If there is a subgraph $G_i$ with $E(G_i) \geq 2$, then $(E(G_i), E(G) \bs E(G_i))$ is a 2-separation of $\GB$ with one side balanced, contrary to assumption.
Thus for each $i$, $|E(G_i)| =1$.
Thus $G = mK_2$ and $\Bb$ is empty.
But $M(mK_2,\emptyset)$ is isomorphic to the $m$-point line $U_{2,m}$, so $r(M)=2$, contrary to assumption.

Thus $v$ is the unique balancing vertex of $\GBu \bs J$.
Since $G$ is 2-connected, so is $\widehat{G}$.
Thus by Proposition \ref{P:UniqueBalancingVertex} $\GBu$ contains a biased subgraph $\Om_0$ that is a subdivision of $\mathsf D_{1,0}$, $\mathsf B_0'$, $\mathsf B_1'$, or $\mathsf B_2'$.
Since both $\Om_0$ and $\widehat{G}$ are 2-connected, there is a sequence of 2-connected biased subgraphs
$\Om_0 \subset \cdots \subset \Om_n$ where $\Om_n = \GBu \bs J$,
and for each $i \in \{0, \ldots, n-1\}$ there is a path $P_i$ in $G$ internally disjoint from $\Om_i$ so that $\Om_{i+1} = \Om_i \cup P_i$.
For each $i \in \{0,\ldots,n\}$, let $(G_i,\Bb_i)$ be the biased subgraph of $\GBu$ induced by $E(\Om_i)$ with $V(G_i)=V(\widehat{G})$ and let $A_i$ be the submatrix of $A$ consisting of all rows of $A$ and precisely those columns representing $E(\Om_i)$.
Thus for each $i$, $A_i$ represents $M(\Om_i)$.
By Lemmas \ref{P:Subdivisions} and \ref{P:ContractedTubeCanonical}, Proposition \ref{P:Whittle}, and Lemma \ref{lem:D10deltaYT2prime}, $A_0$ is projectively equivalent to a canonical lift matrix particular to $\Om_0$.
Inductively assume that there are sequences $T_0, \ldots, T_i$ and $S_0, \ldots, S_i$ of $\FF$-matrices such that for each $k \in \{0, \ldots, i\}$ each matrix $T_kA_kS_k$ is a
canonical lift matrix particular to $\Om_k$.
We may assume that $A$ is of full rank, and so that
for each $k \in \{0,\ldots,i\}$ the rows of $T_k A_k S_k$ are indexed by $v_0 \cup \br{V(G_k)-v}$ (as described in Section \ref{sec:on_canonical_lift_representations}).
Since for each $k$, $V(G_k)=V(\widehat{G})$, for each $k$ the rows of $T_k A_k S_k$ are indexed by $v_0 \cup (V(\widehat{G})-v)$.

Consider $\Om_{i+1} = \Om_i \cup P_i$ and the matrix $T_i A_{i+1}$ representing $M(\Om_{i+1})$.
Let us assume $P_i$ consists of a single edge $e_i$ whose endpoints are $x_i, y_i \in V(\Om_i)$.
Let $x \in V(G_i) - \{x_i, y_i\}$.
Since $\Om_i$ is 2-connected, there is a spanning tree $T$ of $\Om_i - x$.
If $x$ is not $v$, then there is an edge $f$ such that the fundamental cycle in $T \cup f$ is unbalanced in $\Om_i$: set $W = E(T) \cup f$; otherwise set $W = E(T)$.
The subgraph $W \cup e_i \subseteq \Om_{i+1}$ contains a subgraph $C$ that is either a balanced cycle, a pair of unbalanced cycles sharing just vertex $v$, or a contrabalanced theta.
Since $E(C)$ is a circuit of $M(\Om_{i+1})$, this implies that in column $e_i$ of $T_i A_{i+1}$, the entry in row $x$ is 0.
As long as neither endpoint of $e_i$ is $v$, this also implies that the entries in rows $x_i$ and $y_i$ are nonzero.
If $e_i$ has endpoints $v, y_i$, for some vertex $y_i \neq v$,
then the form of $C$ in $\Om_{i+1}$ implies that the entry in matrix $T_i A_{i+1}$ in column $e_i$, row $y_i$ must be nonzero.
Thus $T_i A_{i+1} S_{i+1}$, with rows indexed by $v_0 \cup \br{V(\widehat{G})-v}$, is a canonical lift matrix for some appropriate column scaling matrix $S_{i+1}$.
Hence by induction, there are matrices $T_n$ and $S_n$ such that $T_n A_n S_n$ is
a canonical lift matrix particular to $\Om_n = \GBu \bs J$.

Finally, consider the set of joints $J$.
Let $e$ be a joint.
By assumption $e$ is incident to $v$ and every other joint is in parallel with $e$.
Since $v$ is the unique balancing vertex of $\GBu$, for every vertex $x \neq v$, there is an unbalanced cycle $C_x$ in $G-x$ of length $>1$.
Since $C_x \cup e$ is a circuit of $M$, row $x$ of column $e$ of $T_n A$ is zero.
Thus every row of column $e$ aside from row $v_0$ is zero.
Since $e$ is not a loop of $M$, the entry in row $v_0$ of column $e$ must be nonzero.
Since all joints of $\GBu$ are in parallel with $e$, every column of $A$ representing a joint is zero in all rows but $v_0$.
Thus there is a diagonal matrix $S$ scaling the columns of $T_n A$ so that $T_n A S$ is a canonical lift matrix particular to $\GB$.
This completes the proof of statement (i).

(ii) Let $(H,\Ss)$ be a roll-up of $\GBu$, and let $U \subseteq \Sigma(v)$ be the unique unbalancing class of edges in $\Sigma(v)$ that are joints in $(H,\Ss)$.
By statement (i) there is a $\FF^+$-gain function $\vp$ on $\widehat{G}$ realizing $\widehat{\Bb}$ for which $A=A_L(\widehat{G},\vp)$.
Let $T$ be a spanning tree of $\widehat{G}$ containing exactly one edge in $U$.
Then the $T$-normalized gain function $\vp'$ obtained by switching on $\vp$ satisfies $\vp'(e)=0$ for all edges $e$ not incident to $v$ and $\vp'(e)=0$ for each edge in $U$.
By Corollary \ref{cor:bal_vertex_frame_and_lift_proj_equiv},
the derived $\FF^\times$-gain function $\vp'^\times$ realizes $(H,\Ss)$.
By Lemma \ref{lem:bal_vertex_frame_and_lift_proj_equiv}, $A$ and $A_F(H,\vp'^\times)$ are projectively equivalent.

The final statement follows immediately from Corollary  \ref{cor:bal_vertex_frame_and_lift_proj_equiv}.
\end{proof}

\subsection{Projective equivalence classes are in 1-1 correspondence with switching classes}

Finally, we can prove Theorem \ref{mainthm1to1correspondence}.

\begin{proof}[Proof of Theorem \ref{mainthm1to1correspondence}]
By Proposition \ref{P:switch_equiv_implies_proj_equiv}, every switching class of gain functions is contained in a projective equivalence class of matrix representations.
Thus we just need show that if $A$ and $B$ are projectively equivalent representations of $M$, then $A$ and $B$ are each projectively equivalent to canonical representations whose gain functions are contained in the same switching class.
Because $M$ is 3-connected, $\GB$ is 2-connected and has no 2-separation with one side balanced.

Assume first that $\GB$ is properly unbalanced and not tangled.
Then either $M=F\GB$ or $M=L\GB$, but not both.
Assume that $M=F\GB$.
Let $A$ and $B$ be projectively equivalent $\FF$-matrices representing $M$.
By Theorem \ref{thm:all_reps_eqiv_to_cannonical}(1), each of $A$ and $B$ are projectively equivalent to a canonical frame
matrix particular to $\GB$.
Let $\vp$ and $\psi$ be $\FF^\times$-gain functions
such that $A$ is projectively equivalent to $A_F(G,\vp)$ and $B$ is projectively equivalent to $A_F(G,\psi)$.
By Theorem \ref{T:ProjectiveIsSwitching}(1), $\vp$ and $\psi$ are switching
equivalent.
Similarly, if $M=L\GB$ and $A$ and $B$ are projectively equivalent $\FF$-matrices representing $M$, then
by Theorem \ref{thm:all_reps_eqiv_to_cannonical}(1), each of $A$ and $B$ are projectively equivalent to a canonical lift matrix particular to $\GB$.
Let $\vp$ and $\psi$ be $\FF^+$-gain functions such that $A$ is projectively equivalent to $A_L(G,\vp)$ and $B$ is projectively equivalent to $A_L(G,\psi)$.
By Theorem \ref{T:ProjectiveIsSwitching}(ii), $\vp$ and $\psi$ are switching-and-scaling equivalent.

Now assume that $\GB$ is properly unbalanced but tangled.
Then $L\GB$ and $F\GB$ coincide: $M=L\GB=F\GB$.
Let $A$ and $B$ be projectively equivalent $\FF$-matrices representing $M$.
By Theorem \ref{thm:all_reps_eqiv_to_cannonical}, each of $A$ and $B$ is projectively equivalent to a canonical lift matrix particular to $\GB$, or to a canonical frame matrix particular to $\GB$, but not both.
By Theorem \ref{T:ProjectiveIsSwitching}(iii), either $A$ and $B$ are both projectively equivalent to canonical lift matrices or both are projectively equivalent to canonical frame matrices.
In either case, by statement (i) or (ii) of Theorem \ref{T:ProjectiveIsSwitching}, the gain functions from which these canonical representations arise belong to the same switching class.

Finally, assume that $\GB$ is almost-balanced, and let $A$ and $B$ be projectively equivalent $\FF$-matrices representing $M$.
By Proposition \ref{prop:Zas_fattheta}, $\GB$ has a unique balancing vertex.
By Theorem \ref{thm:all_reps_eqiv_to_cannonical}, $A$ and $B$ are each projectively equivalent to a canonical lift matrix particular to $\GBu$, say, given by $\FF^+$-gain functions $\vp$ and $\psi$ respectively.
By Theorem \ref{thm:proj_equiv_iff_switching_equiv_bal_vertex}(i), $\vp$ and $\psi$ belong to the same switching class.
\end{proof}

\subsubsection*{Acknowledgement}
We thank the referees for their careful reading and valuable suggestions. These vastly improved the paper.

\bibliographystyle{amsplain} \bibliography{Matroids1_bibfile}
\end{document}